\newcommand{\excise}[1]{}
\renewcommand{\setminus}{\smallsetminus}
\renewcommand{\bar}{\overline}
\newcommand\mdoubleplus{\ensuremath{\mathbin{+\mkern-10mu+}}}
\newcommand{\bGr}{\mathbf{Gr}}
\newcommand{\bt}{\bm{\tau}}
\newcommand{\btp}{\bm{\tau^+}}
\newcommand{\bp}{\bm{p}}
\newcommand{\bq}{\bm{q}}
\DeclareMathOperator{\codim}{codim}
\newtheorem{thm}{Theorem}
\newtheorem{cor}[thm]{Corollary}
\newtheorem{theorem}{Theorem}[section]
\newtheorem{lemma}[theorem]{Lemma}
\newtheorem{proposition}[theorem]{Proposition}
\newtheorem{corollary}[theorem]{Corollary}
\newtheorem*{thm*}{Theorem}
\newtheorem*{lem*}{Lemma}
\newtheorem*{prop*}{Proposition}
\newtheorem*{cor*}{Corollary}
\theoremstyle{definition}
\newtheorem*{defn*}{Definition}
\newtheorem*{rmk*}{Remark}
\theoremstyle{definition}
\newtheorem{definition}[theorem]{Definition}
\theoremstyle{remark}
\newtheorem{remark}[theorem]{Remark}
\newtheorem{example}[theorem]{Example}
\begin{document}
%%%%%%%%%%%%%%%%%%%%%%%%%%%%%%%%%%%%%%%

%%%%%%%%%%%%%%%%%%%%%%%%%%%%%%%%%%%%%%%%%%%%%%%%%%%%%%%%%%%%%%%%%%%%%%
\title[Motivic classes of  degeneracy loci and pointed Brill-Noether]{Motivic classes of  degeneracy loci \\
and pointed Brill-Noether varieties}

\author[D. ~Anderson]{Dave Anderson}
\address{Dave Anderson
\newline \indent Department of Mathematics 
\newline \indent The Ohio State University,  Columbus, OH 43210}
\email{anderson.2804@math.osu.edu}
\author[L.~Chen]{Linda Chen}
\address{Linda Chen
\newline \indent Department of Mathematics and Statistics
\newline \indent Swarthmore College,  Swarthmore, PA 19081}
\email{lchen@swarthmore.edu}
\author[N.~Tarasca]{Nicola Tarasca}
\address{Nicola Tarasca 
\newline \indent Department of Mathematics \& Applied Mathematics
\newline \indent Virginia Commonwealth University, Richmond, VA 23284}
\email{tarascan@vcu.edu}

\subjclass[2020]{14N15, 14H51 (primary), 19E99 (secondary)}
\keywords{Motivic Chern classes, motivic Hirzebruch classes, Hirzebruch \linebreak \indent $\chi_y$-genus,  CSM classes,   degeneracy loci, Brill-Noether varieties,   Schubert  calculus}

\begin{abstract}
Motivic Chern  and  Hirzebruch classes are polynomials with K-theory and  homology classes as coefficients,  which specialize to  Chern-Schwartz-\-MacPherson classes,  K-theory classes, and Cappell-Shaneson L-classes.
We provide formulas to compute the motivic Chern and Hirzebruch  classes of  Grassmannian and vexillary degeneracy loci. 
We apply our results to obtain the Hirzebruch $\chi_y$-genus of classical and one-pointed Brill-Noether varieties, and therefore their topological Euler characteristic, holomorphic Euler characteristic, and signature.
\end{abstract}

\maketitle

%%%%%%%%%%%%%%%%%%%%%%%%%%%%%%%%%%%%%%%%%%%%%%%%%%%%%%

The study of motivic Chern classes unifies several theories of characteristic classes of singular varieties \cite{bsy}.
The motivic Chern and Hirzebruch classes are polynomials in a formal variable $y$; the motivic Hirzebruch class $T_y$ specializes to the Chern-Schwartz-MacPherson (CSM) class for $y=-1$, the K-theoretic Todd class for $y=0$, and the Cappell-Shaneson L-class  for $y=1$. The top degree term of $T_y$ of a compact variety gives the  Hirzebruch $\chi_y$-genus which specializes to the topological Euler characteristic  for $y=-1$, the holomorphic Euler characteristic  for $y=0$, and the signature for $y=1$. 

In this article, we compute the motivic Chern and Hirzebruch classes of Grassmannian and vexillary degeneracy loci  in type A. In particular, our results give formulas for their CSM classes and L-classes.
Several invariants of these degeneracy loci have been computed, e.g., a determinantal formula for their classes in cohomology   \cite{MR1154177, af1}, in K-theory   \cite{himn, anderson2019k}, and in algebraic cobordism  \cite{hudson2018vexillary}.  In the important special case of  a {degeneracy locus}   of a single map between vector bundles, the CSM class was computed by Parusi\'nski-Pragacz  \cite{pp}.  However, formulas for their L-classes and for CSM classes of more general degeneracy loci were not known. CSM classes and motivic Chern classes have  recently been studied for Schubert varieties and Schubert cells in flag manifolds \cite{aluffi2009chern, huh2016positivity, aluffi2016chern,  aluffi2017shadows, aluffi2019motivic} and matrix Schubert cells \cite{feher2018chern, zhang2018chern, frw}.
Hirzebruch $\chi_y$-genera have also been computed in other instances, e.g., for Hilbert schemes of points \cite{gottsche1993perverse,cappell2013characteristic} and for singular toric varieties~\cite{maxim2015characteristic}.

We consider maps of vector bundles over a smooth algebraic variety $X$:
\begin{equation*}
E_{p} \xrightarrow{\varphi} F_{q_1} \twoheadrightarrow F_{q_2} \dots \twoheadrightarrow F_{q_t}
\end{equation*}
with ${\rm rank}\left(E_p\right)=p$ and ${\rm rank}\left(F_{q_i}\right)=q_i$.  The  \textit{Grassmannian degeneracy locus} corresponding to the partition $\bm{\lambda} = (\lambda_1\geq \cdots \geq \lambda_t\geq 0)$ with $\lambda_i:=q_i-p+i$ is defined as
\[
W_{\bm{\lambda}}:=\left\{x\in X \, : \, \dim \textrm{ker}\left(E_{p} \rightarrow F_{q_i}\right)|_x \geq i \right\}.
\]
 More generally, we will consider maps of vector bundles 
\begin{equation}
\label{eq:maps}
E_{p_1}\hookrightarrow E_{p_2} \dots \hookrightarrow E_{p_t} \xrightarrow{\varphi} F_{q_1} \twoheadrightarrow F_{q_2} \dots \twoheadrightarrow F_{q_t}
\end{equation}
over a smooth algebraic variety $X$, with ${\rm rank}\left(E_{p_i}\right)=p_i$ and ${\rm rank}\left(F_{q_i}\right) =q_i$. Note that
 $0<p_1\leq \cdots \leq p_t$ and $q_1\geq \cdots\geq q_t>0$.
 Given a (weakly) increasing sequence $\bm{k}:=(k_1,\dots, k_t)$ of positive integers, the  \textit{vexillary degeneracy locus} corresponding to the triple $\bt = (\bm{k},\bm{p},\bm{q})$ is defined as
\[
W_{\bt}:=\left\{x\in X \, : \, \dim \textrm{ker}\left(E_{p_i} \rightarrow F_{q_i}\right)|_x \geq k_i \right\}.
\]
This nomenclature arises because the rank conditions can be described by the \textit{vexillary permutations} from \cite{lascoux1982geometrie} (i.e., permutations avoiding the pattern $2\,1\,4\,3$), see \cite[\S 1]{anderson2012degeneracy}.  The Grassmannian case is recovered when $p_1=\cdots =p_t$ and $k_i=i$ for each $i$.

Our main result gives formulas to compute the motivic Chern and Hirzebruch classes of vexillary degeneracy loci.
We proceed in two steps. 

First, we relate the motivic Hirzebruch classes of a vexillary degeneracy locus $W_{\bt}$ and a certain resolution of $W_{\bt}$. As in Kempf-Laksov \cite{laksov1974determinantal}, $W_{\bt}$ is resolved by $\phi\colon \widetilde{\Omega}_{\bm{\tau}}\rightarrow W_{\bt}$ where $\widetilde{\Omega}_{\bm{\tau}}$ is the variety parametrizing complete flags of sub-bundles $V_{1}\subseteq \cdots\subseteq V_{k_t}$ such that $\mathrm{rank}\left( V_{i}\right)=i$ and $V_{k_i}\subseteq \mathrm{ker}\left(E_{p_i}\rightarrow F_{q_i} \right)$ for each $i$,  see \S\ref{sec:motref}. (In \S\ref{sec:Omega} we will also study a partial resolution $\widetilde{\Omega}\rightarrow \Omega \rightarrow W$, hence the use of the tilde here.)
Let $\iota \colon W_{\bm{\tau}} \hookrightarrow X$ denote the inclusion.
We carry out an explicit computation of the class $(\iota\phi)_*\,T_y\left(\widetilde{\Omega}_{\bm{\tau}} \right)$:

\begin{thm}
\label{thm:secondintrothm}
For a triple $\bt=(\bm{k}, \bm{p}, \bm{q})$ and with assumptions as in \S\ref{sec:assumptions}, the class $(\iota\phi) _*\,T_y\left(\widetilde{\Omega}_{\bm{\tau}} \right)$ is  computed by a universal operator applied to $\left[ W_{\bt}\right]\cap T_y(X)$, where $\left[ W_{\bt}\right]$ is the determinantal formula for the class of $W_{\bt}$ in $A^*(X)$  (explicitly,  Theorem \ref{thm:TyOmegatilde}).
\end{thm}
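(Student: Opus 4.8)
The plan is to express $T_y(\widetilde{\Omega}_{\bt})$ through the tangent bundle of the smooth resolution, push the resulting class forward through the Kempf--Laksov tower, and identify the outcome with a universal modification of the determinantal class $[W_\bt]$. Since $\widetilde{\Omega}_{\bt}$ is smooth and the motivic Hirzebruch transformation is natural under proper push-forward, I would begin from
\[
(\iota\phi)_*\,T_y\!\left(\widetilde{\Omega}_{\bt}\right)=(\iota\phi)_*\!\left(\widetilde{T}_y\!\left(\mathcal{T}_{\widetilde{\Omega}_{\bt}}\right)\cap[\widetilde{\Omega}_{\bt}]\right),
\]
where $\widetilde{T}_y$ is the normalized Hirzebruch characteristic class --- the multiplicative class on $K^0$ with $\widetilde{T}_y(\mathcal{L})=\frac{c_1(\mathcal{L})(1+y)}{1-e^{-c_1(\mathcal{L})(1+y)}}-c_1(\mathcal{L})\,y$ on a line bundle, so that $T_y(M)=\widetilde{T}_y(\mathcal{T}_M)\cap[M]$ for smooth $M$, with $\widetilde{T}_{-1}$ the total Chern class, $\widetilde{T}_{0}$ the Todd class, and $\widetilde{T}_{1}$ the $L$-class \cite{bsy}. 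By the Kempf--Laksov construction recalled in \S\ref{sec:motref}, the map $\widetilde{\Omega}_{\bt}\to X$ factors through a flag bundle $\rho\colon\bFl\to X$ of flags $V_1\subseteq\cdots\subseteq V_{k_t}$ of subbundles of $E_{p_t}$, and $j\colon\widetilde{\Omega}_{\bt}\hookrightarrow\bFl$ is a local complete intersection, cut out one rank condition at a time by the vanishing of the induced bundle maps. Under the assumptions of \S\ref{sec:assumptions} these successive loci have the expected codimension; so, writing $\cE$ for the explicit vector bundle on $\bFl$ assembled from the relevant $\Hom$-bundles built from the tautological flag and the pulled-back $E_{p_i}$, $F_{q_i}$, one obtains in $K^0(\widetilde{\Omega}_{\bt})$ and in $A_*(\bFl)$
\[
\mathcal{T}_{\widetilde{\Omega}_{\bt}}=j^*\!\left(\mathcal{T}_{\bFl}-\cE\right),\qquad j_*[\widetilde{\Omega}_{\bt}]=c_{\mathrm{top}}(\cE)\cap[\bFl],
\]
the first from the tangent and conormal sequences, the second from iterated use of the self-intersection formula along the tower.

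Next I would push forward in two stages. Multiplicativity of $\widetilde{T}_y$ gives $\widetilde{T}_y(\mathcal{T}_{\widetilde{\Omega}_{\bt}})=j^*\!\big(\widetilde{T}_y(\mathcal{T}_{\bFl})\,\widetilde{T}_y(\cE)^{-1}\big)$, and the projection formula for the regular embedding $j$, combined with $j_*[\widetilde{\Omega}_{\bt}]=c_{\mathrm{top}}(\cE)\cap[\bFl]$, yields
\[
j_*\,T_y\!\left(\widetilde{\Omega}_{\bt}\right)=\widetilde{T}_y(\mathcal{T}_{\bFl})\,\widetilde{T}_y(\cE)^{-1}\,c_{\mathrm{top}}(\cE)\cap[\bFl].
\]
Splitting $\mathcal{T}_{\bFl}=\mathcal{T}_{\bFl/X}+\rho^*\mathcal{T}_X$ and applying the projection formula for the flat morphism $\rho$, this becomes
\[
(\iota\phi)_*\,T_y\!\left(\widetilde{\Omega}_{\bt}\right)=\widetilde{T}_y(\mathcal{T}_X)\cap\rho_*\!\left(\Theta_y\cap[\bFl]\right),\qquad \Theta_y:=\widetilde{T}_y(\mathcal{T}_{\bFl/X})\,\widetilde{T}_y(\cE)^{-1}\,c_{\mathrm{top}}(\cE)\in A^*(\bFl)[y].
\]
Here $\Theta_y$ is a universal polynomial in the Chern roots of the tautological flag and the Chern classes of $E_{p_i}$, $F_{q_i}$; it factors as $\Theta_y=c_{\mathrm{top}}(\cE)\cdot\Psi_y$ with $\Psi_y:=\widetilde{T}_y(\mathcal{T}_{\bFl/X})\,\widetilde{T}_y(\cE)^{-1}$ an invertible class of constant term $1$, depending only on $\bt$ and $y$.

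It then remains to compute $\rho_*(\Theta_y\cap[\bFl])$ by the classical Gysin map for a flag bundle: $A^*(\bFl)$ is free over $A^*(X)$, and push-forward along $\rho$ --- an iterated projective bundle --- is the standard Jacobi--Trudi/Schur-determinant operation on symmetric functions of the tautological Chern roots. Run for the integrand $c_{\mathrm{top}}(\cE)$ alone, this reproduces the Kempf--Laksov determinant: $\rho_*(c_{\mathrm{top}}(\cE)\cap[\bFl])=(\iota\phi)_*[\widetilde{\Omega}_{\bt}]=[W_\bt]\cap[X]$, since by \S\ref{sec:assumptions} the map $\phi$ is birational onto $W_\bt$ and the fundamental class of $W_\bt$ is given by the determinantal formula \cite{laksov1974determinantal,MR1154177,af1}. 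Expanding the universal factor $\Psi_y$ in the tautological Chern roots and running the same push-forward term by term produces a universal operator --- acting only through the Chern classes of the $E_{p_i}$, $F_{q_i}$ and $y$, hence commuting with capping by $\widetilde{T}_y(\mathcal{T}_X)$ --- that carries $[W_\bt]\cap[X]$ to $\rho_*(\Theta_y\cap[\bFl])$. Combining with the previous step converts $[W_\bt]\cap[X]$ into $[W_\bt]\cap T_y(X)$ and gives the asserted form; making the operator explicit is the content of Theorem~\ref{thm:TyOmegatilde}.

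The hard part will be this last step: showing that pushing the $y$-corrected integrand $\Theta_y$ through the flag bundle organizes into a universal operator acting on the determinantal class $[W_\bt]$ --- i.e., that the motivic correction $\Psi_y$ genuinely factors through $[W_\bt]$ rather than merely producing some polynomial in the Chern classes of the $E_{p_i}$ and $F_{q_i}$ (push-forward along $\rho$ is not multiplicative, so $\rho_*(c_{\mathrm{top}}(\cE)\,\Psi_y\cap[\bFl])$ is not simply $[W_\bt]\cap[X]$ times $\rho_*(\Psi_y\cap[\bFl])$). This calls for a careful analysis of how the Hirzebruch classes of $\mathcal{T}_{\bFl/X}$ and of $\cE$ interact with the Giambelli--Kempf--Laksov determinantal structure under the flag-bundle Gysin map, and is where the bulk of the computation behind Theorem~\ref{thm:TyOmegatilde} lies. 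A subsidiary technical point, handled by the hypotheses of \S\ref{sec:assumptions}, is verifying that the successive rank loci have the expected codimension --- without which neither the $K^0$-identities above nor the birationality of $\phi$ would hold.
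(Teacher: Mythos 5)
Your setup is sound and in fact runs parallel to the paper's: the paper realizes $\widetilde{\Omega}_{\bt}$ inside a tower of projective bundles and applies Lemma \ref{lem:zeros} (zero loci of transversal sections) and Lemma \ref{lem:Gr} (projective/Grassmann bundles) one step at a time, which amounts to the same lci/projection-formula bookkeeping you carry out in one stroke on a flag bundle, followed by a Gysin push-forward. But the statement to be proved is precisely that the result is a \emph{universal operator} applied to $\left[W_{\bt}\right]\cap T_y(X)$, and your proposal stops exactly there: you correctly note that $\rho_*$ is not multiplicative, and you defer the assertion that the motivic correction $\Psi_y$ ``organizes into a universal operator acting on $[W_{\bt}]$'' as ``the hard part,'' without supplying the mechanism that makes it true. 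That mechanism is the content of the theorem, so the core is left unproved.

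The missing idea in the paper is the raising-operator device. The correction factors on the tower involve only the first Chern classes of the tautological line bundles $\mathbb{S}_i/\mathbb{S}_{i-1}$ together with pulled-back bundles, and the identity \eqref{eq:firstraising}, $\left(c_1\left(L^\vee\right)\right)^k c_i(E-L)=c_{i+k}(E-L)$ for $i\geq \rk E$, allows one to replace each such $c_1$ by the formal operator $R_i$ acting on the entries of the factorized product $\prod_i c_{q'_i}\left(F_{q'_i}-\mathbb{S}_i/\mathbb{S}_{i-1}\right)$, rewritten as a determinant via \eqref{eq:detdominant}. Once the integrand is an operator in the $R_i$ applied to that determinant, the Gysin push-forward of \cite[\S 1.3]{af1} turns the determinant into the Kempf--Laksov determinant $\left[W_{\bt}\right]$ while the operator passes through --- this is what produces the universal operator, and it is also why the operators must be declared to act only on the Chern classes in the expansion of $\left[W_{\bt}\right]$ and not on those inside the correction factors (a point your $\Psi_y$ leaves ambiguous). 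Moreover, in the general case $k_i>i$ one needs the inflation $\bt\mapsto\bt'$ of \S\ref{sec:tau'}, Remark \ref{rmk:fullflags} to ensure the refined flags exist, and the identity \eqref{eq:claimTyi} converting the refined-flag operators $T_y\left(R_i\otimes\left(F_{q'_i}-E_{p'_i}\right)\right)$ into the operators $T_y(i)$, $T_y\left(R_j-R_i\right)$ built from the original data; your flag bundle of flags in $E_{p_t}$ ignores the constraints $V_i\subseteq E_{p'_i}$, so none of this is addressed. Without these steps the proposal does not establish Theorem \ref{thm:secondintrothm}.
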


Second, since the fibers of $\phi$ are not constant, in order to compare the motivic Hirzebruch classes of $W_{\bt}$ and $\widetilde{\Omega}_{\bm{\tau}}$, we proceed to find the 
stratification of $W_{\bt}$ into locally closed strata on which $\phi$ is  locally trivial. 

As reviewed in \S\ref{sec:tau'} after \cite{af1}, a triple $\bt=(\bm{k}, \bm{p}, \bm{q})$ can be inflated to a triple $\bt'=(\bm{k}', \bm{p}', \bm{q}')$ by inflating the sequences $\bm{k}$, $\bm{p}$, and $\bm{q}$ of length $t$ to sequences $\bm{k}'$, $\bm{p}'$, and $\bm{q}'$ of length $k_t$ with $\bm{k}'=(1,2,\dots,k_t)$ such that 
\[
q'_1-p'_1+1 \,>\, q'_2-p'_2+2 \,>\, \cdots \,>\, q'_{k_t}-p'_{k_t}+k_t \,>0
\]
and $W_{\bt}=W_{\bt'}$.
The locus $W_{\bt}$ contains the loci  $W_{\btp}$ for $\btp=\left(\bm{k^+}, \bm{p}', \bm{q}'\right)$ with $\bm{k^+}\geq\bm{k}'$ in componentwise order. For degree reasons, there are only finitely many such sub-loci $W_{\btp}\subseteq W_{\bt}$. 
The map $\phi$ is locally trivial  precisely on the locally closed strata $W_{\btp}^\circ \subset W_{\btp}$ (defined in \eqref{eq:locclosedstrata} as expected) --- see \S\ref{sec:filtr}.
We express the class $(\iota\phi)_*\,T_y\left(\widetilde{\Omega}_{\bm{\tau}} \right)$ computed in Theorem \ref{thm:secondintrothm} in terms of the motivic Hirzebruch classes of the strata of $W_{\bt}$:

\begin{thm}
\label{thm:firststep}
For a triple $\bt=(\bm{k}, \bm{p}, \bm{q})$ and with assumptions as in \S\ref{sec:assumptions}, one has
\[
(\iota\phi)_* \, T_y\left(\widetilde{\Omega}_{\bt}\right) = \sum_{\bm{k^+}} (-y)^{|\bm{k^+}|-|\bm{k}'|} \,\,  \iota_*\,T_y\left(W_{\btp}\right)
\]
where $\btp=\left(\bm{k^+}, \bm{p}', \bm{q}'\right)$ and the sum is over the set of weakly increasing sequences $\bm{k^+}\geq\bm{k}'=(1,\dots, k_t)$. 
\end{thm}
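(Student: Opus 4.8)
To prove Theorem~\ref{thm:firststep}, the plan is to combine three ingredients: additivity of the motivic Hirzebruch class over locally closed stratifications, the structure of the resolution $\phi$ over the strata recorded in \S\ref{sec:filtr}, and a reindexing of the resulting sum. Throughout I work under the standing assumptions of \S\ref{sec:assumptions}. Write $T_{y*}$ for the motivic Hirzebruch class transformation on $K_0(\mathrm{Var}/X)$, so that $T_y(Z)=T_{y*}\!\left[Z\xrightarrow{\mathrm{id}}Z\right]$; recall that $T_{y*}$ commutes with proper pushforward, is additive, and is linear over $K_0(\mathrm{Var})$, under which the class $[\AA^1]$ acts as multiplication by $-y$ --- so $T_{y*}$ sends the class of a Zariski-locally trivial affine bundle of rank $d$ over a base $B$ to $(-y)^d$ times the image of the class of $B$. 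Since $\phi$ is a resolution and $\iota$ a closed immersion, $\iota\phi$ is proper, whence $(\iota\phi)_*T_y(\widetilde{\Omega}_{\bt})=T_{y*}\!\left[\widetilde{\Omega}_{\bt}\xrightarrow{\iota\phi}X\right]$; combining this with additivity applied to the locally closed stratification $\widetilde{\Omega}_{\bt}=\bigsqcup_{\bm{k^+}}\phi^{-1}\!\left(W_{\btp}^{\circ}\right)$ of \S\ref{sec:filtr} gives
\[
(\iota\phi)_*\,T_y\!\left(\widetilde{\Omega}_{\bt}\right)\;=\;\sum_{\bm{k^+}}\,T_{y*}\!\left[\,\phi^{-1}\!\left(W_{\btp}^{\circ}\right)\to X\,\right].
\]

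The heart of the argument is to evaluate each summand. Over the stratum $W_{\btp}^{\circ}$ the relevant kernels have exactly the prescribed ranks, and by \S\ref{sec:filtr} $\phi$ restricts there to a Zariski-locally trivial fibration $\phi^{-1}\!\left(W_{\btp}^{\circ}\right)\to W_{\btp}^{\circ}$ with fibre a flag-type variety $F_{\bm{k^+}}$ determined by the jumps of $\bm{k^+}$. I would establish --- this is the substantive geometric input, to be extracted from the local model of the Kempf--Laksov resolution in \S\ref{sec:filtr} --- that $F_{\bm{k^+}}$ admits an affine paving whose cells are indexed by the \emph{weakly increasing} sequences $\bm{m}$ with $\bm{k}'\le\bm{m}\le\bm{k^+}$, the cell of $\bm{m}$ having dimension $|\bm{m}|-|\bm{k}'|$; in particular $\chi_y(F_{\bm{k^+}})=\sum_{\bm{k}'\le\bm{m}\le\bm{k^+}}(-y)^{|\bm{m}|-|\bm{k}'|}$, the sum over weakly increasing $\bm{m}$. (It is exactly at this point that the weakly increasing hypothesis of the statement enters; one must check that $F_{\bm{k^+}}$ is the correct Schubert-type subvariety and not a larger product of flag varieties.) Granting the paving --- and that it is compatible with the local trivialisations, as is standard for flag-bundle-type fibrations since the structure group preserves the Schubert cells --- the piece $\phi^{-1}\!\left(W_{\btp}^{\circ}\right)$ is a finite disjoint union of Zariski-locally trivial affine bundles over $W_{\btp}^{\circ}$, one of rank $|\bm{m}|-|\bm{k}'|$ for each admissible $\bm{m}$. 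The properties of $T_{y*}$ recalled above then yield
\[
T_{y*}\!\left[\,\phi^{-1}\!\left(W_{\btp}^{\circ}\right)\to X\,\right]\;=\;\chi_y\!\left(F_{\bm{k^+}}\right)\cdot T_{y*}\!\left[\,W_{\btp}^{\circ}\to X\,\right].
\]

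It remains to reorganise the double sum. Substituting the last display into the first and interchanging the order of the resulting double summation,
\[
(\iota\phi)_*\,T_y\!\left(\widetilde{\Omega}_{\bt}\right)\;=\;\sum_{\bm{m}}(-y)^{|\bm{m}|-|\bm{k}'|}\left(\sum_{\bm{k^+}\ge\bm{m}}T_{y*}\!\left[\,W_{\btp}^{\circ}\to X\,\right]\right),
\]
all sequences being weakly increasing and $\ge\bm{k}'=(1,\dots,k_t)$, and with $\btp=(\bm{k^+},\bm{p}',\bm{q}')$ in the inner sum. For fixed $\bm{m}$ the closed stratum $W_{(\bm{m},\bm{p}',\bm{q}')}$ decomposes as $\bigsqcup_{\bm{k^+}\ge\bm{m}}W_{\btp}^{\circ}$, so additivity of $T_{y*}$ identifies the inner sum with $\iota_*\,T_y\!\left(W_{(\bm{m},\bm{p}',\bm{q}')}\right)$; relabelling $\bm{m}$ as $\bm{k^+}$ then produces exactly the asserted formula. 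No M\"obius inversion is needed in this direction: the monomial coefficient $(-y)^{|\bm{k^+}|-|\bm{k}'|}$ is precisely what remains after collecting terms, so the proof reduces, modulo the standard functoriality and normalisation of $T_{y*}$, to the cellular structure of the fibres $F_{\bm{k^+}}$, which I expect to be the main obstacle.
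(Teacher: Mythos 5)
Your argument is correct and is in substance the paper's own proof: the paper likewise uses the stratification by the loci $W_{\btp}^\circ$ over which $\phi$ is Zariski-locally trivial with Schubert-variety fibers, the fact that the relevant cells are affine spaces of dimension $|\bm{m}|-|\bm{k}'|$ with $\chi_y=(-y)^{|\bm{m}|-|\bm{k}'|}$, and additivity of $T_y$ --- you merely unroll the recursion of Lemma~\ref{lemma:Fibra} into a direct interchange of the double sum over strata and fiber cells. The affine-paving claim you defer is exactly the paper's asserted coordinate computation that the open cell $\left(S_{\bm{\nu}^+}\right)^\circ$ is isomorphic to $\mathbb{A}^{|\bm{k^+}|-|\bm{k}'|}$, applied to each $\bm{m}\le\bm{k^+}$ (and note you do not actually need the paving to be compatible with the trivializations: knowing $\chi_y$ of the fiber suffices, by restricting to a trivializing cover and using additivity as in the proof of Lemma~\ref{lemma:Fibra}), so nothing beyond the paper's own input is missing.
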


For instance, consider the triples $\bt(i):=((i),(3),(3))$, for $i=1,2,3$, and the triple $\bt(22):=((2,2),(2,3),(3,3))$. The strata required for the locus $W_{\bt(1)}$ are $W_{\bt(i)}^\circ$ for $i=1,2,3$. Since the triple $\bt(2)$ is inflated to the triple $((1,2),(2,3),(3,3))$, the strata required for the locus $W_{\bt(2)}$ are $W_{\bt(2)}^\circ$, $W_{\bt(3)}^\circ$, and $W_{\bt(22)}^\circ$.
Then Theorem \ref{thm:firststep} gives
\begin{align*}
(\iota\phi) _*\,T_y\left(\widetilde{\Omega}_{\bt(1)} \right) &= \iota_*\,T_y \left(W_{\bt(1)}\right) -y\, \iota_*\,T_y \left(W_{\bt(2)}\right) +y^2 \, \iota_*\,T_y \left(W_{\bt(3)}\right),\\
(\iota\phi) _*\,T_y\left(\widetilde{\Omega}_{\bt(2)} \right) &= \iota_*\,T_y \left(W_{\bt(2)}\right) -y \,\iota_*\,T_y \left(W_{\bt(22)}\right) -y \,\iota_*\,T_y \left(W_{\bt(3)}\right),\\
(\iota\phi) _*\,T_y\left(\widetilde{\Omega}_{\bt(3)} \right) &= \iota_*\,T_y \left(W_{\bt(3)}\right),\\
(\iota\phi) _*\,T_y\left(\widetilde{\Omega}_{\bt(22)} \right) &= \iota_*\,T_y \left(W_{\bt(22)}\right).
\end{align*}

Finally, by the inclusion-exclusion principle, applying Theorem \ref{thm:firststep} to the (closure of the) strata of $W_{\bt}$, and then to the strata of the strata, and so on, one can express the motivic Hirzebruch class of $W_{\bt}$ in terms of classes $(\iota\phi)_* \, T_y\left(\widetilde{\Omega}_{\bm{\sigma}}\right)$ corresponding to a subset of  the finitely many vexillary degeneracy sub-loci $W_{\bm{\sigma}}\subseteq W_{\bt}$. Thus we have:

\begin{thm}
\label{thm:final}
Combining Theorems~\ref{thm:secondintrothm} and \ref{thm:firststep} allows one to compute the motivic Hirzebruch class of~$W_{\bt}$ for any triple $\bt$.
\end{thm}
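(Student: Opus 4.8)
The plan is to invert the linear relations furnished by Theorem~\ref{thm:firststep} via M\"obius inversion over a finite poset of strata, and then to substitute the closed formula of Theorem~\ref{thm:secondintrothm} (equivalently Theorem~\ref{thm:TyOmegatilde}) for the resulting terms.

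First I would assemble the relevant poset of strata. After inflating $\bt$ as in \S\ref{sec:tau'} one may assume $\bt=\bt'$, with $\bm{k}=\bm{k}'=(1,2,\dots,k_t)$, so that the strata of $W_{\bt}$ are the vexillary loci $W_{\btp}$, $\btp=(\bm{k^+},\bm{p}',\bm{q}')$, indexed by the weakly increasing sequences $\bm{k^+}\geq\bm{k}'$ in componentwise order. Each $W_{\btp}$ is again a vexillary degeneracy locus, hence after re-inflation has its own strata, which are once more sub-loci of $W_{\bt}$; iterating this produces a set $\mathcal{S}$ of triples $\bm{\sigma}$ with $W_{\bm{\sigma}}\subseteq W_{\bt}$, closed under the operation ``pass to a stratum of an (iterated) stratum''. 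Partially order $\mathcal{S}$ by declaring $\bm{\sigma}'\succ\bm{\sigma}$ when $W_{\bm{\sigma}'}$ is a proper (iterated) stratum of $W_{\bm{\sigma}}$. Since a proper stratum has strictly larger codimension in $X$, while codimension is bounded by $\dim X$ --- this is the ``degree reasons'' remark --- the poset $\mathcal{S}$ is finite and the order is well-founded, with the deepest strata as its maximal elements and $\bt$ itself as its minimum.

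Next I would record the relations and invert them. Applying Theorem~\ref{thm:firststep} with each $\bm{\sigma}\in\mathcal{S}$ in place of $\bt$ yields, for every $\bm{\sigma}$, an identity
\[
(\iota\phi)_*\,T_y\bigl(\widetilde{\Omega}_{\bm{\sigma}}\bigr)\;=\;\iota_*\,T_y\bigl(W_{\bm{\sigma}}\bigr)\;+\sum_{\bm{\sigma}'\succ\bm{\sigma}}(-y)^{\,d(\bm{\sigma},\bm{\sigma}')}\,\iota_*\,T_y\bigl(W_{\bm{\sigma}'}\bigr)
\]
for explicit nonnegative integers $d(\bm{\sigma},\bm{\sigma}')$ read off from Theorem~\ref{thm:firststep}. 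Thus, over the coefficient ring of motivic Hirzebruch classes (say $A_*(X)[y]$), the family $\{(\iota\phi)_*\,T_y(\widetilde{\Omega}_{\bm{\sigma}})\}_{\bm{\sigma}\in\mathcal{S}}$ is obtained from $\{\iota_*\,T_y(W_{\bm{\sigma}})\}_{\bm{\sigma}\in\mathcal{S}}$ by a unipotent, upper-triangular transformation for $\succ$; such a transformation is invertible, with inverse again unipotent and upper-triangular with entries in $\ZZ[y]$, so I can solve for the latter family, in particular for $\iota_*\,T_y(W_{\bt})$. Concretely this is a descending induction: at a maximal $\bm{\sigma}$ the sum is empty, so $\iota_*\,T_y(W_{\bm{\sigma}})=(\iota\phi)_*\,T_y(\widetilde{\Omega}_{\bm{\sigma}})$ is given directly by Theorem~\ref{thm:secondintrothm}, and for smaller $\bm{\sigma}$ one isolates $\iota_*\,T_y(W_{\bm{\sigma}})$ from its own identity and back-substitutes the already-computed expressions for the deeper strata --- this is precisely the four-line computation of the worked example ($\bt(3)$ and $\bt(22)$ first, then $\bt(2)$, then $\bt(1)$). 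The outcome is a formula $\iota_*\,T_y(W_{\bt})=\sum_{\bm{\sigma}\in\mathcal{S}}c_{\bm{\sigma}}(y)\,(\iota\phi)_*\,T_y(\widetilde{\Omega}_{\bm{\sigma}})$ with $c_{\bm{\sigma}}(y)\in\ZZ[y]$ the M\"obius coefficients of the incidence algebra of $\mathcal{S}$; feeding in Theorem~\ref{thm:secondintrothm} for each $(\iota\phi)_*\,T_y(\widetilde{\Omega}_{\bm{\sigma}})$ --- a universal operator applied to $[W_{\bm{\sigma}}]\cap T_y(X)$, with $[W_{\bm{\sigma}}]$ the Kempf-Laksov determinantal class in $A^*(X)$ --- turns this into an explicit expression in the Chern classes of the bundles $E_{p_i},F_{q_i}$ and in $T_y(X)$.

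The inversion itself is automatic once the triangular structure is in place, and needs no linear independence of the classes involved; the genuine work, and the main obstacle, is the bookkeeping that legitimizes the set-up: one must check that every iterated stratum $W_{\bm{\sigma}}$ is a vexillary degeneracy locus still satisfying the genericity and dimension hypotheses of \S\ref{sec:assumptions} --- so that Theorems~\ref{thm:secondintrothm} and~\ref{thm:firststep} may be applied to it --- that re-inflation keeps one inside the same finite set $\mathcal{S}$, and that the codimension bound really does force $\mathcal{S}$ to be finite, so that the descending induction terminates.
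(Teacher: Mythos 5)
Your proposal is correct and follows essentially the same route as the paper: the paper's argument for Theorem~\ref{thm:final} is precisely the inclusion-exclusion you describe --- apply Theorem~\ref{thm:firststep} to $W_{\bt}$, then to its strata, and so on, obtaining a finite unipotent triangular system (triangular by codimension, finite for degree reasons) that one inverts by descending induction and then evaluates via Theorem~\ref{thm:secondintrothm}, exactly as in the worked example with $\bt(1),\bt(2),\bt(3),\bt(22)$. Your added care about iterated strata being vexillary loci satisfying the hypotheses of \S\ref{sec:assumptions} is a reasonable elaboration of what the paper leaves implicit, not a departure from its method.
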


As an example, for the triple $\bt(1):=((1),(3),(3))$ as above, solving for $\iota_*\,T_y \left(W_{\bt(1)}\right)$ gives
\begin{equation*}
\iota_*\,T_y \left(W_{\bt(1)}\right) = (\iota\phi) _*\,T_y\left(\widetilde{\Omega}_{\bt(1)} \right) +y \, (\iota\phi) _*\,T_y\left(\widetilde{\Omega}_{\bt(2)} \right) + y^2\, (\iota\phi) _*\,T_y\left(\widetilde{\Omega}_{\bt(22)} \right).
\end{equation*}
Theorem \ref{thm:secondintrothm} can then be applied to compute the right-hand side as a polynomial in $y$ with coefficients expressed in terms of the Chern classes of the given vector bundles.

\subsection*{Brill-Noether theory} 
In \S\ref{sec:pBNvar} we apply our results to classical and pointed Brill-Noether varieties.
Brill-Noether theory studies the geometry of line bundles and linear series on algebraic curves.  For a smooth algebraic curve $C$, the \textit{classical Brill-Noether variety} $W^r_d(C)$ parametrizes  line bundles of degree $d$ on $C$ having at least $r+1$ independent global sections \cite{MR770932}.  More generally,  the \textit{pointed Brill-Noether variety} $W_d^{\bm{a}}(C,P)$ parametrizes  line bundles of degree $d$ on $C$ having at least $r+1$ independent global sections with vanishing orders  at the  point $P$ at least equal to $\bm{a}=(0\leq a_0 < \cdots < a_r \leq d)$. In \S \ref{sec:motrefBN}, we  compute the motivic Hirzebruch class of  $W^r_d(C)$ and $W_d^{\bm{a}}(C,P)$ for a general $(C,P)$, as these are examples of Grassmannian degeneracy loci. This extends the study of the CSM class of the classical Brill-Noether varieties $W^r_d(C)$ treated by Parusi\'nski-Pragacz~\cite{pp}.

Similarly, we compute the motivic Hirzebruch class of the \textit{Brill-Noether variety} $G^r_d(C)$ parametrizing linear series on $C$ of degree $d$ and projective dimension $r$, and its pointed counterpart $G^{\bm{a}}_d(C,P)$ parametrizing linear series on $C$ of degree $d$ with prescribed vanishing $\bm{a}$ at the point $P$. For these, we use the result presented more generally about a degeneracy locus $\Omega_{\bm{\lambda}}$ in~\S\ref{sec:Omega}.

For a smooth \textit{Brill-Noether-Petri general} curve $C$ of genus $g$, one has: (i) $G^r_d(C)$  is smooth and has dimension equal to $\rho(g,r,d):=g-(r+1)(g-d+r)$; (ii)  $W^r_d(C)$ has dimension equal to $\rho(g,r,d)$, provided that $g-d+r\geq 0$; and (iii) when $g-d+r>0$, the singular locus of $W^r_d(C)$ coincides with $W^{r+1}_d(C)\subset W^r_d(C)$ \cite[pg.~214]{MR770932}. 

For instance, $W^r_d(C)$ is smooth when $C$ is a smooth {Brill-Noether-Petri general} curve, $\rho(g,r,d)\leq 2$ and $g\geq 2$. Write $\lambda=g-d+r$, and let $\theta$ be the cohomology class of the theta divisor in $\mathrm{Pic}^d(C)$. In the surface case, we prove:

\begin{cor}
\label{cor:TyWrd}
Fix $g\geq 2$ and $r,d$ such that $\rho(g,r,d)=2$. For a Brill-Noether-Petri general smooth curve $C$ of genus $g$, the motivic Hirzebruch class of the surface $W^r_d(C)$~is
\begin{multline*}
T_y\left( W^r_d(C) \right) =\bigg( 1+  \frac{\lambda(r+3)(y-1)}{2(\lambda+2)}\,\theta \\
{}+ \frac{\lambda(r+1)\left(\lambda(r+1)(y-1)^2-2y\right)}{ 2(\lambda+r)(\lambda+r+2)} \theta ^2\bigg)
\cdot \theta^{g-2} \prod_{i=0}^r \frac{i!}{(g-d+r+i)!}
\end{multline*}
in $H^*\left(\mathrm{Pic}^d(C) \right)[y]$. 
Since the top degree term of $T_y$ gives the Hirzebruch $\chi_y$-genus, one has
\[
\chi_y\left( W^r_d(C) \right) = g! \frac{\lambda(r+1)\left(\lambda(r+1)(y-1)^2-2y\right)}{ 2(\lambda+r)(\lambda+r+2)}  \prod_{i=0}^r \frac{i!}{(g-d+r+i)!}.
\]
Moreover, with the same hypotheses, one has $\chi_y\left( G^r_d(C) \right)=\chi_y\left( W^r_d(C) \right)$.
\end{cor}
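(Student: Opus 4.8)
The plan is to exploit the smoothness of $W^r_d(C)$ under these hypotheses, apply the explicit degeneracy-locus formula to the Poincar\'e-bundle model of $W^r_d(C)$, and then carry out the truncation forced by $\dim W^r_d(C)=2$.

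\emph{Step 1 (reduction to a smooth surface).} Write $\lambda=g-d+r$. From $\rho(g,r,d)=2$ one gets $g=(r+1)\lambda+2$, hence $\rho(g,r+1,d)=g-(r+2)(\lambda+1)=-(\lambda+r)<0$ as soon as $\lambda\ge1$; the remaining case $\lambda=r=0$ forces $(g,r,d)=(2,0,2)$, where $W^0_2(C)=\mathrm{Pic}^2(C)$ and the asserted formula reduces to $T_y=1$. Assuming $\lambda\ge1$, the Brill--Noether existence theorem gives $W^{r+1}_d(C)=\emptyset$, so $W^r_d(C)$ is a smooth projective surface by (iii) above, and the forgetful morphism $G^r_d(C)\to W^r_d(C)$, $(L,V)\mapsto L$, is an isomorphism (every $L\in W^r_d(C)$ has $h^0(L)=r+1$, so $V=H^0(C,L)$ is forced). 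In particular $T_y(W^r_d(C))$ is the Hirzebruch characteristic class of the tangent bundle of $W^r_d(C)$ capped with $[W^r_d(C)]$, and the last assertion $\chi_y(G^r_d(C))=\chi_y(W^r_d(C))$ is immediate.

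\emph{Step 2 (apply the degeneracy-locus formula).} By the Kempf--Kleiman--Laksov construction recalled in \S\ref{sec:motrefBN}, $W^r_d(C)$ is the Grassmannian degeneracy locus for a single map $E\xrightarrow{\varphi}F$ of vector bundles on $X=\mathrm{Pic}^d(C)$ obtained by pushing forward a Poincar\'e line bundle twisted by a sufficiently positive divisor --- so that $\ker\varphi_L=H^0(C,L)$, with rectangular partition $(\lambda^{r+1})$ --- and by Step~1 the Kempf--Laksov resolution \cite{laksov1974determinantal} $\phi\colon\widetilde\Omega\to W^r_d(C)$, namely $G^r_d(C)$, is an isomorphism. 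Theorem~\ref{thm:TyOmegatilde} (the explicit form of Theorem~\ref{thm:secondintrothm}), equivalently the $\Omega_{\lla}$-formula of \S\ref{sec:Omega}, then expresses $\iota_*T_y(W^r_d(C))=(\iota\phi)_*T_y(\widetilde\Omega)$ as an explicit universal operator applied to $[W^r_d(C)]\cap T_y(X)$. Two simplifications occur: $X$ is an abelian variety, so $T_y(X)=[X]$; and the Chern character of the pushforward of a Poincar\'e bundle is a polynomial in $\theta$ by Grothendieck--Riemann--Roch (cf.\ \cite{MR770932}), so the Chern classes of $E$ and $F$ --- hence the output of the operator --- lie in $\QQ[\theta,y]$, to be multiplied by $[W^r_d(C)]=\Bigl(\prod_{i=0}^r\tfrac{i!}{(g-d+r+i)!}\Bigr)\theta^{(r+1)\lambda}$.

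\emph{Step 3 (truncate and read off).} Since $\dim W^r_d(C)=2$ we have $(r+1)\lambda=g-2$, so after multiplying by $\theta^{g-2}$ only the $\theta^0,\theta^1,\theta^2$ terms of the power series from Step~2 survive, as $\theta^{g+1}=0$ in $H^*(\mathrm{Pic}^d(C))$. Extracting those three coefficients as rational functions of $y,\lambda,r$ --- their denominators $(\lambda+2)$ and $(\lambda+r)(\lambda+r+2)$ arising from the Chern classes of the relative cotangent bundle of the (flag/Grassmann-bundle) resolution through the relevant Schur/Segre identity --- and comparing with the displayed expression gives the formula for $T_y(W^r_d(C))$. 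Applying $\int_{\mathrm{Pic}^d(C)}$ kills all but the $\theta^g$ term and uses $\int_{\mathrm{Pic}^d(C)}\theta^g=g!$, giving the stated $\chi_y$-genus.

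\emph{Where the work lies.} Steps~1--2 are formal; the real content is the explicit evaluation in Step~3, i.e.\ converting the universal operator of Theorem~\ref{thm:TyOmegatilde} into those three coefficients. This requires the Grothendieck--Riemann--Roch computation of the Chern classes of the pushforward bundles, feeding them through the Grassmann-bundle (Schur function / Segre class) manipulations underlying the resolution, and truncating; the appearance of the denominators $(\lambda+2)$ and $(\lambda+r)(\lambda+r+2)$ is the hallmark of this computation and where care is needed. An equivalent route avoids Step~2: compute $c_1,c_2$ of the smooth surface $W^r_d(C)=G^r_d(C)$ directly from the tangent-bundle sequence (via the tautological rank-$(r+1)$ subbundle and injectivity of the Petri map) and substitute into the surface specialization $\chi_y=\tfrac{1}{12}(c_1^2+c_2)(1+y^2)+\tfrac{1}{6}(c_1^2-5c_2)\,y$ of the Hirzebruch genus; the bottleneck is then the same Grothendieck--Riemann--Roch computation.
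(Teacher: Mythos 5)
Your Step~1 is fine, and so is the final claim $\chi_y(G^r_d(C))=\chi_y(W^r_d(C))$. The gap is in Step~2: you assert that the Kempf--Laksov resolution $\widetilde\Omega_{\bt}$, ``namely $G^r_d(C)$'', is an isomorphism onto $W^r_d(C)$, so that Theorem~\ref{thm:TyOmegatilde} directly computes $\iota_*T_y(W^r_d(C))$. Neither identification holds in the framework you invoke. In the paper's setup $G^r_d(C)$ is the Grassmann-bundle locus $\Omega_{\bm{\lambda}}$ of \S\ref{sec:Omega}, for which no closed determinantal formula is given (Proposition~\ref{prop:1Omegafrom1W} only relates its pushforward to classes of loci of type $W$), whereas $\widetilde\Omega_{\bt}$ is the full-flag resolution obtained by inflating the single rank condition $h^0(L)\ge r+1$ using auxiliary flags --- in the Brill--Noether model, vanishing orders at the point $P$. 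Over the locus where these auxiliary kernels jump, e.g.\ where $h^0(L(-rP))\ge 2$ (the stratum \eqref{eq:Wa+} with $\bm{k^+}=(2,2,\dots,r+1)$ from \S\ref{sec:filtr}, of codimension two in the surface), the fiber of $\widetilde\Omega_{\bt}\to W^r_d(C)$ is positive-dimensional. So smoothness of $W^r_d(C)$ and $W^{r+1}_d(C)=\emptyset$ do not make $\phi$ an isomorphism, and your ``equivalently, the $\Omega_{\lla}$-formula'' is not available as a substitute.

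Concretely, Theorem~\ref{thm:TyOmegatilde} computes $A_{\bm{\lambda}}=(\pi\iota)_*T_y\left(\widetilde\Omega_{\bt}\right)$, and Corollary~\ref{cor:BNW} (i.e.\ Theorem~\ref{thm:firststep}) applied to the two strata gives $\iota_*T_y\left(W^r_d(C)\right)=A_{\bm{\lambda}}-(-y)\,R_1R_2\left|c_{\lambda+j-i}\right|_{1\le i,j\le r+1}$, the subtracted term being $\chi_y\left(\mathbb{A}^1_{\CC}\right)$ times the class of the codimension-two stratum. Your plan omits this correction; by \eqref{TT} it contributes $y\,\frac{r(r+1)}{2(\lambda+r)(\lambda+r+1)}\,g!\prod_{i=0}^r\frac{i!}{(\lambda+i)!}$ to $\chi_y$, which is nonzero for $r\ge 1$ and $y\ne 0$, so following your Step~2 literally produces the wrong class and the wrong genus. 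This is exactly the pitfall the paper flags: the relevant stratification is finer than smoothness of the surface suggests (see the remark following the proof of Corollary~\ref{cor:TyWrd} and Remark~\ref{rmk:piecepol}). The paper's proof is: expand $A_{\bm{\lambda}}$ from \eqref{eq:TyOmegatilde} on a surface, subtract the $(-y)$-weighted stratum class, and evaluate with the operator identities \eqref{TT}; your Step~3 truncation is the right idea but must be applied to this corrected expression. Your closing alternative --- computing $c_1,c_2$ of the smooth surface $G^r_d(C)\cong W^r_d(C)$ and using the surface specialization of $\chi_y$ --- would be a genuinely different and legitimate route, but as written it is only a pointer and still requires the Grothendieck--Riemann--Roch computation you do not carry out.
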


The formula for $\chi_y$ in Corollary \ref{cor:TyWrd} recovers the topological and holomorphic Euler characteristics for $y=-1$ and $y=0$ (known from \cite{pp} and \cite{act}, respectively), and gives a new result on the \textit{signature} of the surface $W^r_d(C)$ for $y=1$:
\[
\sigma\left( W^r_d(C) \right) = g! \frac{2-g}{(g-d+2r)(g-d+2r+2)}\prod_{i=0}^r \frac{i!}{(g-d+r+i)!}
\]
and similarly for the \textit{surface} $G^r_d(C)$, since in this case $\sigma\left( G^r_d(C) \right)=\sigma\left( W^r_d(C) \right)$.

We briefly review the definition of the signature.
For a compact oriented manifold $X$ of real dimension $4k$, consider the   non-degenerate symmetric bilinear form on the finite-dimensional vector space $H^{2k}(X,\mathbb{R})$ given by
\[
\langle \alpha, \beta \rangle := \int_X \alpha \cup \beta, \qquad \mbox{for $\alpha,\beta \in H^{2k}(X,\mathbb{R})$.}
\]
The signature $\sigma(X)$ of $X$ is defined as the number of positive entries minus the number of negative entries in a diagonalized version of this form. When $X$ is smooth, Hirzebruch's signature theorem expresses $\sigma(X)$ as a universal linear combination of the Pontrjagin numbers of the tangent bundle of $X$ \cite{hirzebruch1966topological}. When $X$ is possibly singular, $\sigma(X)$ is computed by the top degree term of the L-class of $X$ \cite{cappell1991stratifiable, bsy}.

When $g=2$, then $W^r_d(C)=\mathrm{Pic}^d(C)$, thus indeed  $\chi_y=0$, since Abelian varieties have trivial tangent bundle. As a further check, $\sigma$ and the topological Euler characteristic $\chi_{\mathrm{top}}$ always have the same parity \cite[Cor.~64]{rodriguez2005signature}, and this is indeed satisfied by the formulas resulting from Corollary \ref{cor:TyWrd}. 
In fact, we observe the following, perhaps surprising, relations for the surface~$W^r_d(C)$:
\begin{align*}
\frac{g-2}{2}\, \sigma\left( W^r_d(C) \right) &= -\chi_{\mathrm{hol}}\left( W^r_d(C) \right),\\
(2g-3)\, \sigma\left( W^r_d(C) \right) &= -\chi_{\mathrm{top}}\left( W^r_d(C) \right),\\
(g-2)\, \chi_{\mathrm{top}}\left( W^r_d(C)\right) &=(4g-6)\, \chi_{\mathrm{hol}}\left( W^r_d(C)\right).
\end{align*}

In \S\ref{sec:ptBNsurfpencils} we also compute explicitly the motivic Hirzebruch class of pointed Brill-Noether surfaces parametrizing pencils (i.e., $r=1$).
Interestingly, we show that the motivic Hirzebruch classes  of Grassmannian degeneracy loci  corresponding to a partition $\bm{\lambda}$ do not specialize from the case $\lambda_i>\lambda_{i+1}$ to the case $\lambda_i=\lambda_{i+1}$, for some $i$ (Remark \ref{rmk:piecepol}). 
This is in contrast with the  K-theory class (the case $y=0$) of  degeneracy loci with rank conditions imposed by arbitrary (not only vexillary) permutations, given by Grothendieck polynomials \cite{fl}.

\subsection*{Our strategy} 
Our approach draws a great deal of inspiration from Parusi\'nski-Pragacz \cite{pp}. Indeed, all computations build on three fundamental computations of CSM classes, treated in~\cite{pp}: 
\begin{enumerate}[(a)]
\item  the CSM class of the zero locus of a regular section of a vector bundle; 
\item  the CSM class of a Grassmannian bundle; and 
\item  the push-forward of the CSM class via a fibration.
\end{enumerate}
In  \S \ref{sec:motivic}, we compute the \textit{motivic refinement} of  (a), (b), (c), see Lemmata \ref{lem:zeros}, \ref{lem:Gr}, \ref{lemma:Fibra}. The CSM classes in cases (a), (b), (c)  are recovered by specializing to $y=-1$. 
In order to make the generalization to Grassmannian and vexillary degeneracy loci possible, we combine (a), (b), (c) with the  proof strategy introduced in \cite{af1} (see \S\ref{sec:motref}). Additional care is needed in dealing with motivic Chern and Hirzebruch classes when one generalizes methods designed for fundamental classes. Indeed, while the fundamental class coincides with the push-forward of the fundamental class of a resolution, motivic Chern and Hirzebruch classes are more delicate to handle. 
To arrive at the motivic Chern class of a vexillary degeneracy locus $W_{\bt}$, we first compute the push-forward of the motivic Chern  class of a resolution $\widetilde{\Omega}_{\bt}$ of $W_{\bt}$ (Theorem \ref{thm:TyOmegatilde}). 
Furthermore, using (c), we relate the push-forward of the motivic Chern class of $\widetilde{\Omega}_{\bt}$ with $\bt=(\bm{k}, \bm{p}, \bm{q})$ to the motivic   Chern class of varieties $W_{\btp}$ with $\btp=\left(\bm{k^+}, \bm{p}', \bm{q}'\right)$, for $\bm{k^+}\geq \bm{k}'$ (Theorem \ref{thm:1Omegatildefrom1W}). Finally, 
the motivic Chern class of $W_{\bt}$ follows by the inclusion-exclusion principle.

A key step in our argument is the careful analysis in \S\ref{sec:filtr} of the stratification of a  degeneracy locus induced from its resolution. We stratify $W_{\bt}$  with $\bt=(\bm{k}, \bm{p}, \bm{q})$ by the  loci $W_{\btp}^\circ$ with $\btp=\left(\bm{k^+}, \bm{p}', \bm{q}'\right)$, for $\bm{k^+}\geq \bm{k}'$. 
These are precisely the strata on which  the resolution $\widetilde{\Omega}_{\bt}\rightarrow W_{\bt}$ is  locally trivial. 

Theorem \ref{thm:1Omegatildefrom1W} uses the \textit{additivity} of the motivic Hirzebruch class $T_y$ as a transformation from the Grothendieck group $K_0(\mathrm{var}/X)$ of algebraic varieties over $X$.
In fact, the Hirzebruch $\chi_y$-genus is \textit{the most general additive genus} \cite{bsy}. Computation of other invariants, as the elliptic class and elliptic genus, would thus require new strategies.

\smallskip

Specializing to the case when the ambient variety $X$ is a Grassmannian, our results give the motivic class of its Schubert varieties in terms of the motivic class of the Grassmannian. 
For instance, when $y=-1$, using the formula of  \cite{aluffi2009chern} for the CSM class of Grassmannians, one can verify that the resulting formulas for the CSM class of Schubert varieties are consistent with the results in \cite{aluffi2009chern} after some nontrivial combinatorics (see~\S\ref{sec:SchubinGr}).

Motivic Chern classes of Schubert cells in partial flag varieties have  been computed in \cite{frw} via localization; the classes of the Schubert varieties could then be obtained from \cite{frw} by summing over all the strata of the closure of the Schubert cells. 
In the case of vexillary permutations, our strategy produces  the classes of Schubert varieties as a first outcome; this makes it feasible to arrive at a viable formula in the application to the Brill-Noether setting (see \S \ref{sec:motrefBN}).

\subsection*{Open questions}
In \cite{act} we study more generally two-pointed Brill-Noether varieties, and show that they have the structure of determinantal varieties obtained from maps of flag bundles with rank conditions imposed by 321-avoiding permutations. We compute in \cite{act} their connective K-theory class and holomorphic Euler characteristic. It would be interesting  to compute their  motivic Chern class, extending this work to the two-pointed case. 

The holomorphic Euler characteristic of two-pointed Brill-Noether varieties is expressed in \cite{chan2017euler} as the enumeration of certain standard set-valued tableaux. 
We have found tableau formulas expressing the Hirzebruch $\chi_y$-genus  of one-pointed Brill-Noether surfaces.
We wonder whether there exist tableau formulas expressing the Hirzebruch $\chi_y$-genus  of one- or two-pointed Brill-Noether varieties in general.

\smallskip

\noindent {\it Acknowledgements.} We are indebted to \cite{MR1154177, af1} for the treatment of vexillary degeneracy loci,  to \cite{pp} for the treatment of CSM classes and degeneracy loci, and to \cite{bsy} for inspiring us to consider  motivic Chern and Hirzebruch classes. We would like to thank J\"org Sch\"urmann for clarifying some points in an earlier version of \S\ref{sec:motivic}, and the referee for a careful reading.

%%%%%%%%%%%%%%%%%%%%%%%%%%%%%%%
%%%%%%%%%%%%%%%%%%%%%%%%%%%%%%%
%%%%%%%%%%%%%%%%%%%%%%%%%%%%%%%

\section{Vexillary degeneracy loci}

Here we set  the notation and collect the assumptions used throughout.
We adopt  the notation of triples $\bt$ from \cite{af1, anderson2019k}.

\subsection{Vexillary degeneracy loci}
\label{sec:setup}
Let $X$ be an irreducible variety over an algebraically closed field.
Given maps of vector bundles over  $X$ as in \eqref{eq:maps}
and a (weakly) increasing sequence $\bm{k}:=(k_1,\dots, k_t)$  of positive integers, the  \textit{vexillary degeneracy locus} corresponding to the triple $\bm{\tau} = (\bm{k},\bm{p},\bm{q})$ is defined as
\[
W_{\bm{\tau}}:=\left\{x\in X \, : \, \dim \textrm{ker}\left(E_{p_i} \rightarrow F_{q_i}\right)|_x \geq k_i  \, \mbox{ for all $i$}\right\}
\]
with inclusion $\iota\colon W_{\bt} \hookrightarrow X$. 
Such a locus is Cohen-Macaulay  when it has the expected dimension and when $X$ is Cohen-Macaulay.
We will compute the motivic Hirzebruch class of $W_{\bt}$ in $A_*(X)[y]$, where $A_*(X)$ is the Chow group of $X$ and $y$ is a formal variable.
It will be convenient to consider  arbitrary weakly increasing sequences $\bm{k}$ (see e.g., \S\ref{sec:filtr}).

\subsubsection{The reduced triple $\bar{\bt}$} 
\label{sec:kpqbar}
Some of the conditions defining $W_{\bt}$ may be redundant, and $W_{\bt}$ could be similarly described as the vexillary degeneracy locus corresponding to a triple consisting of shorter sequences. 
After \cite{MR1154177}, a triple $\bt=(\bm{k}, \bm{p}, \bm{q})$ with $\bm{p}=(0\leq p_1\leq \cdots \leq p_t)$ and $\bm{q}=(q_1\geq \cdots \geq q_t\geq 0)$ is called \textit{essential} if 
\[
0< k_1 < \cdots < k_t \quad\mbox{and}\quad q_1-p_1+k_1 > \cdots > q_t-p_t+k_t >0.
\]

Given $\bt=\left({\bm{k}},{\bp},{\bq}\right)$, we denote by $\bar{\bt}=\left(\bar{\bm{k}},\bar{\bp},\bar{\bq}\right)$  the essential triple of  \textit{shortest subsequences} $\bar{\bm{k}}$, $\bar{\bm{p}}$, and $\bar{\bm{q}}$ of $\bm{k}$, $\bm{p}$, and $\bm{q}$ such that  $W_{\bt}= W_{\bar{\bt}}$.
Necessarily, $\bar{\bm{k}}$ is {strictly} increasing.

\begin{example}
The sequences $\bm{k}=(2,2,3,4)$, $\bm{p}=(4,5,6,7)$, and $\bm{q}=(8,7,6,3)$ and their subsequences $\bar{\bm{k}}=(2,3)$, $\bar{\bm{p}}=(4,6)$, and $\bar{\bm{q}}=(8,6)$ describe the same degeneracy locus.
\end{example}

\subsubsection{The inflated triple ${\bt}'$} 
\label{sec:tau'}
Assume the flag $E_{p_1}\subseteq E_{p_2}\subseteq \cdots \subseteq E_{p_t}$ extends to a full flag of sub-bundles $E_{1}\subseteq E_2\subseteq \cdots \subseteq E_{p_t}$ defined on $X$, and similarly, the flag $F_{q_1}\twoheadrightarrow F_{q_2}\twoheadrightarrow \cdots \twoheadrightarrow F_{q_t}$ extends to a full flag of quotients $F_{q_1}\twoheadrightarrow \cdots \twoheadrightarrow F_2\twoheadrightarrow   F_{1}$ defined on $X$. This assumption is indeed not restrictive, see Remark \ref{rmk:fullflags}.
Contrary to the previous subsection, we describe here how to inflate the triple $\bt$ to a triple consisting of \textit{longer} sequences defining the same locus $W_{\bt}$. This will be used  to define the locus $\widetilde\Omega_{\bt}$ in \S\ref{sec:Omegatildedef}.
 
Assume the triple $\bt$ is essential, that is, $\bt=\bar{\bt}$.
The triple $\bt$ can then be inflated to a triple $\bt'=(\bm{k}',\bp',\bq')$ by inflating the sequences $\bm{k}, \bm{p}$, and $\bm{q}$ of length $t$ to  sequences $\bm{k}', \bm{p}'$, and $\bm{q}'$ of length $k_t$ with $\bm{k}'=(1,2,\dots, k_t)$ as in \cite[\S 1.4]{af1} (see also further details in \cite[\S 1]{anderson2019k}). 
Namely, suppose that $k_i> k_{i-1}+1$, for some~$i$. Then necessarily  $p_i>p_{i-1}$ or $q_i<q_{i-1}$ (otherwise the corresponding conditions defining $W_{\bt}$ are redundant, and the triple $\bt$ can be reduced).
When $p_i>p_{i-1}$, inflate by inserting the entry $k_i-1$ between $k_{i-1}$ and $k_i$ in $\bm{k}$, the entry $p_i-1$ between $p_{i-1}$ and $p_i$ in $\bm{p}$, and the entry $q_i$ between $q_{i-1}$ and $q_i$ in $\bm{q}$.
On the geometric side, the condition $\dim \mathrm{ker}\left( E_{p_i}\rightarrow F_{q_i}\right)|_x \geq k_i$  implies  $\dim \mathrm{ker}\left( E_{p_i-1}\rightarrow F_{q_i}\right)|_x \geq k_i-1$ for a point $x\in W_{\bt}$. 
The case $p_i=p_{i-1}$ and $q_i<q_{i-1}$ is treated similarly. Proceeding in this way, one arrives at sequences $\bm{k}', \bm{p}'$, and $\bm{q}'$ of length $k_t$ such that $\bm{k}'=(1,2,\dots, k_t)$ and $W_{\bt}= W_{\bt'}$.

\subsubsection{Feasibility} 
\label{sec:lafeasible}
For an essential triple $\bt$, in order for the conditions defining the locus $W_{\bt}$ to be feasible, 
we assume that the sequence 
\begin{equation}
\label{lambda}
\lambda_{k_i} := q_i-p_i+k_i \quad\quad\mbox{for $i=1,\dots, t$,}
\end{equation}
is weakly decreasing.

\subsection{The partition \texorpdfstring{$\bm{\lambda}_{\bt}$}{lambda}}
\label{sec:la}
Fix an essential triple $\bt$.
Extending \eqref{lambda}, define the partition $\bm{\lambda}_{\bt}=(\lambda_1, \dots, \lambda_{k_t})$ as 
\[
\lambda_i:=\lambda_{k_a}\quad \mbox{for $k_{a-1}<i\leq k_{a}$.}
\]
For a triple $\bt$ which is not necessarily essential,  define $\bm{\lambda}_{\bt}:=\bm{\lambda}_{\bar{\bt}}$, where $\bar{\bt}$ is the reduced triple consisting of the shortest subsequences of $\bm{k}$, $\bm{p}$, $\bm{q}$ such that $W_{\bt}= W_{\bar{\bt}}$ (\S \ref{sec:kpqbar}).
The expected codimension of the locus $W_{\bt}$ in $X$ is 
\[
\codim_X\left(W_{\bt}\right) = \left|\bm{\lambda}_{\bt}\right|:=\sum_{i=1}^{k_t} \lambda_{i}.
\]

\begin{example}
For the triple $\bt=(\bm{k},\bp,\bq)$ where  $\bm{k}=(2,2,3)$, $\bm{p}=(4,5,6)$, and $\bm{q}=(8,7,6)$, one has $\bm{\lambda}_{\bt}=(6,6,3)$.
\end{example}

\subsection{The bundles $E(i)$, $F(i)$,  classes $c(i)$, and operators $T_y(i)$}
\label{sec:cidef}
For an essential triple $\bt$, 
define  the vector bundles $E(i)$ and $F(i)$ as
\[
E(i):= E_{p_a} \quad \mbox{and}\quad F(i):= F_{q_a} \qquad \mbox{for $k_{a-1}<i\leq k_a$.}
\]
For a triple $\bt$ which is not necessarily essential,  define 
\[
\quad E(i) := E_{\bar{p}_a} \quad \mbox{and}\quad
F(i) := F_{\bar{q}_a}
\quad \mbox{for $\bar{k}_{a-1}<i\leq \bar{k}_a$.}
\]
Here $\bar{\bm{k}},\bar{\bm{p}},\bar{\bm{q}}$ are the shortest subsequences of $\bm{k}$, $\bm{p}$, $\bm{q}$ such that $W_{\bt}= W_{\bar{\bt}}$  with $\bar{\bt} =\left(\bar{\bm{k}},\bar{\bp},\bar{\bq}\right)$ (see \S \ref{sec:kpqbar}). 
Let 
\[
c(i):=c\left(F(i)-E(i)\right) = \frac{c\left(F(i)\right)}{c\left(E(i)\right)} \,\in\, A^*(X).
\]

\subsubsection{}
\label{sec:Ri}
For each $i$, the \textit{raising operator} $R_i$ increases the index of the class $c(i)$ by one, that is, 
\[
R_i\,c(i)_m= c(i)_{m+1} \qquad \mbox{and} \qquad R_i \,c(k)_m=c(k)_m, \quad \mbox{for $k\not= i$.}
\]
Moreover, $R_i$ is extended linearly over $\mathbb{Q}$ and multiplicatively on monomials in  classes $c(j)$, e.g., $R_i \left( c(j)_m\, c(k)_n \right)= \left(R_i \, c(j)_m\right) \left(R_i \,c(k)_n\right)$.

\subsubsection{}
For a formal variable $R$ and a vector bundle $E$ of rank $e$ with Chern roots $a_i$, for $i=1,\dots, e$, define 
\[
T_y\left(R\otimes E\right):=\prod_{i=1}^{e} Q_y\left(R+ a_i\right), \,\,\mbox{with}\,\,
Q_y\left(\alpha\right):= \frac{\alpha(1+y)}{1-e^{-\alpha(1+y)}}-\alpha y  \in \mathbb{Q}[y]\llbracket \alpha \rrbracket.
\]
As in \eqref{eq:Qyexpansion}, the  terms in degree at most two are given by
\begin{eqnarray*}
T_y\left(R\otimes E\right) &=& 1 + \frac{1}{2}(1-y)\left(eR + c_1(E) \right) \\
&&{}+ \frac{1}{12}(1+y)^2 \left( eR^2 + 2 c_1(E)R + \mathrm{ch}_2(E) \right)\\
&&{}+\frac{1}{4}(1-y)^2 \left( {e\choose 2}R^2 + (e-1)c_1(E)R + c_2(E)\right) +\dots
\end{eqnarray*}
where $\mathrm{ch}(E)=\sum_{i=1}^e \mathrm{ch}_i(E)$ is the Chern character of $E$.
The motivic Hirzebruch class $T_y(X)$ of a smooth variety $X$ is recovered when $R=0$ and $E$ is the tangent bundle of $X$ (see \S\ref{sec:motivic}). 
Similarly, in the absence of $E$, we set
%For $E$ a trivial line  bundle, we set 
\[
T_y(R):=Q_y(R). 
\]

For each $i$, define the operator 
\[
T_y(i):=T_y\left( R_i \otimes \left( F(i)-E(i)\right)\right) 
\]
acting on the classes $c(j)$ such that $R_i$ acts as in \S\ref{sec:Ri}, and the Chern roots of the virtual bundle $F(i)-E(i)$ act by multiplication.

\subsection{Assumptions} 
\label{sec:assumptions}
We collect here the assumptions used throughout.
For a  triple $\bt=(\bm{k}, \bm{p}, \bm{q})$, consider the vexillary degeneracy locus $W_{\bt}$ and a sub-locus $W_{\btp}\subseteq W_{\bt}$ where $\btp=\left(\bm{k^+}, \bm{p}', \bm{q}'\right)$ with $\bm{k^+}\geq \bm{k}'$ in componentwise order.
The corresponding locally closed stratum is
\begin{equation}
\label{eq:locclosedstrata}
W_{\btp}^\circ:=\left\{x\in X \, : \, \dim \textrm{ker}\left(E_{p'_i} \rightarrow F_{q'_i}\right)\big|_x = k^+_i \right\}.
\end{equation}
This is 
\[
{W}_{\btp}^\circ = {W}_{\btp} \setminus \bigcup_{\bm{k^{\mdoubleplus}}>\bm{k}^+} {W}_{\bt^{\mdoubleplus}}\, \subseteq X, \qquad \mbox{where $\bt^{\mdoubleplus}=\left(\bm{k^{\mdoubleplus}}, \bm{p}', \bm{q}'\right)$.}
\]
We assume that $X$ is an irreducible smooth algebraic variety over an algebraically closed field of characteristic zero, and for all weakly increasing sequences $\bm{k^+}\geq \bm{k}'$ (i.e., \mbox{$k^+_i\geq i$} for all $i=1,\dots,k_t$),  the stratum ${W}_{\btp}^\circ$ with $\btp=\left(\bm{k^+}, \bm{p}', \bm{q}'\right)$ is  smooth of pure dimension $\dim X - |\bm{\lambda}_{\btp}|$.

\begin{remark}
In \cite{pp},  Parusi\'nski-Pragacz show that their formula for the CSM class of the degeneracy locus of a single map between vector bundles holds under a weaker assumption which allows one to consider   a \textit{possibly singular} analytic variety $X$ as  ambient variety. It seems reasonable to expect that  our results hold under a similar weaker assumption. 
For this, one  needs to upgrade Lemma \ref{lem:zeros} below, for instance by
 working in the analytic category with a Whitney stratification of a singular $X$ as in \cite{pp}. 
However, $X$ will be smooth in all the applications we consider.
\end{remark}

%%%%%%%%%%%%%%%%%%%%%%%%%%%%%%%
%%%%%%%%%%%%%%%%%%%%%%%%%%%%%%%
%%%%%%%%%%%%%%%%%%%%%%%%%%%%%%%

\section{Motivic classes: fundamental computations}
\label{sec:motivic}

After briefly reviewing motivic Chern and Hirzebruch classes following \cite{bsy}, we discuss here three fundamental computations: Lemmata \ref{lem:zeros}, \ref{lem:Gr}, and \ref{lemma:Fibra}. These will serve as the cornerstone of the paper.
 
 For an algebraic variety $X$ over a field of characteristic zero, let $K_0(\mbox{var}/X)$ be the Grothendieck group of algebraic varieties over $X$, let $K_*(X)$ be the Grothendieck group of coherent sheaves of $\mathscr{O}_X$-modules, and $A_*(X)$ the Chow group.
The transformations
\[
\begin{tikzcd}[row sep=2.5em]
 & K_0(\mbox{var}/X) \arrow{dl}[swap]{mC} \arrow{dr}{T_y} \\
K_*(X)\otimes \mathbb{Z}[y]  %\arrow{rr}{} 
&& A_*(X)\otimes \mathbb{Q}[y]
\end{tikzcd}
\]
are the unique transformations which commute with proper push-down and,  for $X$ smooth, satisfy 
\begin{align}
mC\left(\mbox{id}_X\right) &=\sum_{i\geq 0} \left[\wedge^i \,\mathscr{T}_X^\vee\right]y^i =:\lambda_y\left(\mathscr{T}_X^\vee\right), \label{eq:mC} \\
T_y\left(\mbox{id}_X\right) &= \prod_{i=1}^{\dim X} Q_y(\alpha_i) \cap [X] =:T_y\left(\mathscr{T}_X\right)\cap [X]. \label{eq:Ty}
\end{align}
Here, $\alpha_i$ are the Chern roots of the tangent bundle $\mathscr{T}_X$, and $Q_y\left(\alpha\right)$ is the series
\[
Q_y\left(\alpha\right):= \frac{\alpha(1+y)}{1-e^{-\alpha(1+y)}}-\alpha y  \qquad\in \mathbb{Q}[y]\llbracket \alpha \rrbracket
\]
starting as
\begin{equation}
\label{eq:Qyexpansion}
Q_y\left(\alpha\right) = 1 + \frac{1}{2}\alpha (1-y) +\frac{1}{12}\alpha^2 (1+y)^2 + \dots.
\end{equation}
The function $\lambda_y$ satisfies: $\lambda_y(a+b)=\lambda_y(a) \lambda_y(b)$. 
The function $T_y$ is the \textit{motivic Hirzebruch class} function introduced in \cite{hirzebruch1966topological}. One has $T_y(a+b)= T_y(a) T_y(b)$, as well.

For arbitrary $X$, let $\{X_i\}_{i\in I}$ be a stratification of $X$, with $X_i$ locally closed and smooth. By definition, we have
\begin{align}
\label{eq:singclasses}
mC\left({\rm id}_X\right) &:= \sum_{i\in I} mC\left(X_i \rightarrow X\right),  & T_y\left({\rm id}_X\right) &:= \sum_{i\in I} T_y\left(X_i \rightarrow X\right).
\end{align}
Since any two such stratifications admit a common refinement, the above is well defined.

\begin{remark}
The transformations $mC$ and $T_y$ satisfy $td_{(1+y)}\circ mC = T_y$, where
 \[
 td_{(1+y)}\colon K_*(X)\otimes \mathbb{Z}[y] \rightarrow A_*(X)\otimes \mathbb{Q}\left[y, (1+y)^{-1}\right]
 \]
is Yokura's generalization \cite{yokura1998singular} of the Todd class transformation from the singular Riemann-Roch theorem \cite{MR1644323}.
\end{remark}

\subsection{Zeros of sections}
\label{Zs}
For  a vector bundle $E$ on $X$,  let $\iota\colon Z\hookrightarrow X$ be the  zero locus of a regular section $s$
of $E\rightarrow X$. 

\begin{lemma}
\label{lem:zeros}
If $X$ is smooth and $s\colon X\rightarrow E$ meets transversally the zero section of $E$, then
\begin{align*}
mC\left(Z\hookrightarrow X\right) &= \frac{ \lambda_{-1}\left(E^\vee\right)}{ \lambda_y\left(E^\vee\right)}  mC\left({\rm id}_X\right),\\
T_y\left(Z\hookrightarrow X\right) &= \frac{ c_{\rm top}(E)}{ T_y(E)}  T_y\left({\rm id}_X\right).
\end{align*}
\end{lemma}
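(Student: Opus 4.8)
The plan is to exploit the two characterizing properties of $mC$ and $T_y$: they commute with proper push-forward, and on a smooth variety they are given by the normalization formulas \eqref{eq:mC}--\eqref{eq:Ty}. Transversality of $s$ to the zero section means exactly that $Z$ is a smooth closed subvariety of pure codimension $r:=\rk E$, that $s$ is a regular section (so its Koszul complex is exact), and that $N_{Z/X}\cong E|_Z$; hence there is a short exact sequence $0\to\mathscr{T}_Z\to\mathscr{T}_X|_Z\to E|_Z\to 0$ of bundles on $Z$. Since $\iota\colon Z\hookrightarrow X$ is a closed immersion, hence proper, and $Z$ is smooth, functoriality combined with \eqref{eq:mC}--\eqref{eq:Ty} gives
\[
mC(Z\hookrightarrow X)=\iota_*\,mC(\mathrm{id}_Z)=\iota_*\,\lambda_y(\mathscr{T}_Z^\vee),\qquad T_y(Z\hookrightarrow X)=\iota_*\,T_y(\mathrm{id}_Z)=\iota_*\bigl(T_y(\mathscr{T}_Z)\cap[Z]\bigr),
\]
where in the first identity $\lambda_y(\mathscr{T}_Z^\vee)\in K^0(Z)[y]$ is viewed inside $K_*(Z)[y]$ via the isomorphism $K^0(Z)\cong K_*(Z)$ valid for $Z$ smooth.

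Next I would clear denominators: multiplying through by $\lambda_y(E^\vee)$ (resp.\ $T_y(E)$), then applying the multiplicativity of $\lambda_y$ (resp.\ $T_y$) to the exact sequence above, so that $\lambda_y(E^\vee|_Z)\,\lambda_y(\mathscr{T}_Z^\vee)=\lambda_y(\mathscr{T}_X^\vee|_Z)$ (resp.\ $T_y(E|_Z)\,T_y(\mathscr{T}_Z)=T_y(\mathscr{T}_X|_Z)$), and then the projection formula, since the remaining factors are restrictions of classes on $X$:
\[
\lambda_y(E^\vee)\cdot mC(Z\hookrightarrow X)=\lambda_y(\mathscr{T}_X^\vee)\cdot\iota_*[\mathscr{O}_Z],\qquad T_y(E)\cdot T_y(Z\hookrightarrow X)=T_y(\mathscr{T}_X)\cap\iota_*[Z].
\]
It then remains to identify the two push-forwards. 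Exactness of the Koszul complex $0\to\wedge^rE^\vee\to\cdots\to E^\vee\to\mathscr{O}_X\to\mathscr{O}_Z\to0$ gives $\iota_*[\mathscr{O}_Z]=\sum_{i=0}^r(-1)^i[\wedge^iE^\vee]=\lambda_{-1}(E^\vee)$ in $K^0(X)\cong K_*(X)$, and the standard formula for the class of the zero scheme of a regular section gives $\iota_*[Z]=c_{\rm top}(E)\cap[X]$ in $A_*(X)$.

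Plugging these in and using $mC(\mathrm{id}_X)=\lambda_y(\mathscr{T}_X^\vee)$, $T_y(\mathrm{id}_X)=T_y(\mathscr{T}_X)\cap[X]$ yields $\lambda_y(E^\vee)\cdot mC(Z\hookrightarrow X)=\lambda_{-1}(E^\vee)\cdot mC(\mathrm{id}_X)$ and $T_y(E)\cdot T_y(Z\hookrightarrow X)=c_{\rm top}(E)\cdot T_y(\mathrm{id}_X)$, which are the claimed identities; note $T_y(E)$ has leading term $1$, so $T_y(E)^{-1}$ is a genuine element of $A^*(X)\otimes\mathbb{Q}[y]$, while $\lambda_{-1}(E^\vee)/\lambda_y(E^\vee)$ is to be read in the cleared-denominator sense just derived. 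The one point that needs care is the $K$-theoretic bookkeeping — the identification $K^0\cong K_*$ on smooth schemes, the applicability of the projection formula to the a priori non-effective virtual classes involved, and the precise meaning of the $K$-theory quotient; by contrast the $T_y$ side, taking place in graded Chow with $A^*(X)$ acting on $A_*(X)$, is routine. Specializing to $y=-1$ recovers the CSM computation (a) of Parusi\'nski-Pragacz \cite{pp}.
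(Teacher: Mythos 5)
Your proposal is correct and follows essentially the same route as the paper's proof: functoriality plus the smooth normalization, the normal-bundle sequence $0\to\mathscr{T}_Z\to\mathscr{T}_X|_Z\to E|_Z\to 0$ with multiplicativity of $\lambda_y$ and $T_y$, the projection formula, and the identifications $\iota_!(1)=\lambda_{-1}(E^\vee)$ (Koszul) and $\iota_*[Z]=c_{\rm top}(E)\cap[X]$. The only difference is cosmetic — you clear denominators rather than pushing forward the formal inverse $1/\iota^*\lambda_y(E^\vee)$ as the paper does — which if anything makes the interpretation of the $K$-theoretic fraction slightly more transparent.
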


\begin{proof}
Since $s$ meets transversally the zero section of $E$, it follows that $Z$ is smooth.
We claim that
\begin{align*}
mC\left(Z\hookrightarrow X\right) = \frac{ \lambda_{-1}\left(E|_{X}^\vee\right)}{ \lambda_y\left(E|_{X}^\vee\right)}  mC\left({\rm id}_{X}\right).
\end{align*}
Indeed, we have
\begin{align*}
mC\left(Z\hookrightarrow X\right) &= \iota_! \, mC\left(\mbox{id}_{Z}\right) = \iota_! \, \lambda_y\left(\mathscr{T}^\vee_{Z}\right) =  \iota_! \left( \frac{\lambda_y\left(\mathscr{T}^\vee_{Z}\right)}{\iota^* \lambda_y\left(\mathscr{T}^\vee_{X}\right)} \right)  \lambda_y\left(\mathscr{T}^\vee_{X}\right) \\
&= \iota_! \, \left( \frac{1}{\iota^* \lambda_y\left(E|_{X}^\vee\right)} \right)  \lambda_y\left(\mathscr{T}^\vee_{X}\right) = \frac{ \lambda_{-1}\left(E|_{X}^\vee\right)}{ \lambda_y\left(E|_{X}^\vee\right)}  \lambda_y\left(\mathscr{T}^\vee_{X}\right).
\end{align*}
Here, $\iota_!$ is the K-theoretic push-forward via $\iota$. We have used the projection formula, and $\iota_!(1)= \lambda_{-1}(E|_{X}^\vee)$ in $K_*(X)$ (see for instance \cite[V, Prop.~4.3]{fula}; this also appears in \cite[\S 8.1]{frw}).
The statement for $mC$ follows. The same argument together with $\iota_*(1)=c_{\rm top}(E)\cap [X]$ in $A_*(X)$ prove  the statement for~$T_y$.
\end{proof}

When $y=-1$, Lemma \ref{lem:zeros} recovers the computation $\iota_*\, c_{\mathrm{SM}}(Z) = \frac{ c_{\rm top}(E)}{ c(E)} \cap  c_{\mathrm{SM}}(X)$ treated in
\cite[Proposition 1.3]{pp}.

\subsection {Grassmannian bundles} 
\label{Gr}
Given a vector bundle $E$ on $X$, let 
\[
\pi \colon {\rm Gr}(r,E)\rightarrow X
\]
 be the Grassmannian bundle parametrizing rank $r$ sub-bundles of $E$.
Consider the tautological exact sequence over ${\rm Gr}(r,E)$
\[
0\rightarrow S \rightarrow E \rightarrow Q \rightarrow 0.
\]

\begin{lemma}
\label{lem:Gr}
For arbitrary $X$, we have
\begin{align*}
mC\left({\rm id}_{{\rm Gr}(r,E)}\right) &= \lambda_y\left(\left(S^\vee\otimes Q\right)^\vee\right) \cdot \pi^* mC\left({\rm id}_X\right),\\
T_y\left({\rm id}_{{\rm Gr}(r,E)}\right) &= T_y\left(S^\vee\otimes Q\right) \cdot \pi^* \,T_y\left({\rm id}_X\right).
\end{align*}
\end{lemma}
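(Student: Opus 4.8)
The plan is to use the same strategy as in Lemma~\ref{lem:zeros}: express the class of the total space in terms of the base via the relative tangent bundle, then apply the projection formula. Concretely, the Grassmannian bundle $\pi\colon {\rm Gr}(r,E)\to X$ is smooth over $X$ (when $X$ itself is smooth) with relative tangent bundle $\mathscr{T}_{\pi} \cong S^\vee\otimes Q$, the standard identification for Grassmannian bundles. Thus there is a short exact sequence $0\to S^\vee\otimes Q \to \mathscr{T}_{{\rm Gr}(r,E)} \to \pi^*\mathscr{T}_X\to 0$, so that $\mathscr{T}_{{\rm Gr}(r,E)} = (S^\vee\otimes Q) + \pi^*\mathscr{T}_X$ in $K$-theory. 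The first step is to prove the lemma for $X$ smooth. For $mC$, using \eqref{eq:mC} and multiplicativity of $\lambda_y$,
\[
mC\left({\rm id}_{{\rm Gr}(r,E)}\right) = \lambda_y\left(\mathscr{T}^\vee_{{\rm Gr}(r,E)}\right) = \lambda_y\left((S^\vee\otimes Q)^\vee\right)\cdot \pi^*\lambda_y\left(\mathscr{T}^\vee_X\right) = \lambda_y\left((S^\vee\otimes Q)^\vee\right)\cdot \pi^* mC\left({\rm id}_X\right),
\]
where I use that $mC$ is compatible with the smooth pullback $\pi^*$, or equivalently just expand both sides using $\lambda_y$. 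Similarly, for $T_y$, using \eqref{eq:Ty} and multiplicativity of $T_y$ together with $[{\rm Gr}(r,E)] = \pi^![X]$ and the projection formula,
\[
T_y\left({\rm id}_{{\rm Gr}(r,E)}\right) = T_y\left(\mathscr{T}_{{\rm Gr}(r,E)}\right)\cap [{\rm Gr}(r,E)] = T_y(S^\vee\otimes Q)\cdot \pi^*\Bigl(T_y(\mathscr{T}_X)\cap[X]\Bigr) = T_y(S^\vee\otimes Q)\cdot\pi^*\,T_y\left({\rm id}_X\right).
\]

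The second step is to remove the smoothness hypothesis on $X$. For arbitrary $X$, choose a stratification $\{X_i\}_{i\in I}$ into locally closed smooth strata, so by \eqref{eq:singclasses} one has $mC({\rm id}_X) = \sum_i mC(X_i\to X)$ and $T_y({\rm id}_X)=\sum_i T_y(X_i\to X)$. Pulling back the Grassmannian bundle, $\pi^{-1}(X_i) = {\rm Gr}(r,E|_{X_i})\to X_i$ gives a stratification of ${\rm Gr}(r,E)$ into locally closed smooth strata (each $\pi^{-1}(X_i)$ is smooth since it is a Grassmannian bundle over a smooth base). Applying the smooth case to each $\pi^{-1}(X_i)\to X_i$ and using that $mC$, $T_y$ commute with the proper pushforward along the closed inclusion $\pi^{-1}(X_i)\hookrightarrow {\rm Gr}(r,E)$ (together with base change for the open/closed decomposition), summing over $i$ recovers the desired formula for arbitrary $X$. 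Here the tautological sub- and quotient-bundles $S, Q$ on ${\rm Gr}(r,E)$ restrict to the corresponding tautological bundles on each $\pi^{-1}(X_i)$, so the Chern-class factor $\lambda_y((S^\vee\otimes Q)^\vee)$, resp.\ $T_y(S^\vee\otimes Q)$, is the global one restricted, and the sum telescopes correctly against $\pi^*$ applied to the stratawise decomposition of $mC({\rm id}_X)$, resp.\ $T_y({\rm id}_X)$.

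The main obstacle is bookkeeping in the reduction to the smooth case: one must check that pulling back a common-refinement stratification of $X$ produces a bona fide stratification of ${\rm Gr}(r,E)$ by smooth locally closed pieces and that the additive definition \eqref{eq:singclasses} is compatible with the smooth pullback $\pi^*$ — i.e.\ that $\pi^*$ and the stratawise decomposition interact as claimed. This is where the verification that $S\otimes_{{\rm Gr}(r,E)} Q$ restricts correctly and that the projection formula applies fiberwise (so the same Chern-class operator appears on each stratum) needs to be made carefully. Once that is in place, the identity is immediate from multiplicativity of $\lambda_y$ and $T_y$ and the exact sequence for $\mathscr{T}_\pi$; no serious computation is required.
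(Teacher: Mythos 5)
Your proposal is correct and follows essentially the same route as the paper: in the smooth case both arguments use the identification of the relative tangent bundle with $S^\vee\otimes Q$ together with multiplicativity of $\lambda_y$ and $T_y$, and for arbitrary $X$ both stratify into smooth locally closed strata, apply the smooth case to each Grassmannian bundle ${\rm Gr}(r,E)|_{X_i}$, and sum via the additive definition \eqref{eq:singclasses}.
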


\begin{proof}
%This follows from the definitions \eqref{eq:mC}, \eqref{eq:Ty}, and \eqref{eq:singclasses}.
When $X$ is smooth, ${\rm Gr}(r,E)$ is smooth.
%, and we claim that
%\begin{align}
%\label{eq:mCGrXsmooth}
%mC\left({\rm id}_{{\rm Gr}(r,E)}\right) = \lambda_y\left(\left(S^\vee\otimes Q\right)^\vee\right) \cdot \pi^* mC\left({\rm id}_{X}\right).
%\end{align}
From the definition \eqref{eq:mC}, we have
\[
mC\left(\mbox{id}_{{\rm Gr}(r,E)}\right) = \lambda_y\left(\mathscr{T}^\vee_{{\rm Gr}(r,E)}\right) = \lambda_y\left(\left(S^\vee\otimes Q\right)^\vee\right) \cdot \pi^* \lambda_y\left(\mathscr{T}^\vee _{X}\right),
\]
and the statement about the motivic Chern class follows. 
%When $X$ is singular, fix a stratification $\{X_i\}_{i\in I}$ of $X$ with $X_i$ locally closed and smooth. The restriction ${\rm Gr}(r,E)|_{X_i}$ is smooth, hence $mC\left({\rm id}_{{\rm Gr}(r,E)|_{X_i}}\right)$ can be computed as in \eqref{eq:mCGrXsmooth}. By \eqref{eq:singclasses}, we have
%\[
%mC\left({\rm id}_{{\rm Gr}(r,E)}\right) = \sum_{i\in I} mC\left({\rm Gr}(r,E)|_{X_i}\rightarrow {\rm Gr}(r,E)\right),
%\]
%and the statement follows.
For arbitrary $X$, the statement about the motivic Chern class is an immediate application of the Verdier-Riemann-Roch formula from \cite[Cor.~2.1 (4)]{bsy}.
The proof for $T_y$ is similar using \cite[Cor.~3.1 (3)]{bsy}.
\end{proof}

For $y=-1$, Lemma \ref{lem:Gr} recovers
$c_{\mathrm{SM}}\left({\rm Gr}(r,E)\right) = c\left(S^\vee\otimes Q\right) \cap \pi^* c_{\mathrm{SM}}(X)$, treated in \cite[Proposition 1.5]{pp}.

\subsection{Fibrations}
\label{ss:Fibra}
Given a proper morphism $p\colon Y\rightarrow X$, let $\mathscr{X}:=\{X_k\}_{k\in K}$ be a stratification of $X$ into locally closed strata $X_k$ such that $p$ is locally trivial in the Zariski topology over each $X_k$ with smooth fiber~$F_k$. Assume that there exists a unique top-dimensional stratum $X_0$ in $\mathscr{X}$.
Let $p_!$ be the K-theoretic push-forward via $p$.

\begin{lemma}
\label{lemma:Fibra}
We have
\begin{align*}
p_! \, mC\left(Y\rightarrow X\right) &= \sum_{k\in K} d_k \, mC\left(\overline{X}_k \hookrightarrow X\right),\\
p_* \, T_y\left(Y\rightarrow X\right) &= \sum_{k\in K} e_k \, T_y\left(\overline{X}_k \hookrightarrow X\right)
\end{align*}
with
\begin{align*}
d_k &:= \left(\int_{F_k}\lambda_y\left(\mathscr{T}^\vee _{F_k}\right)\right)-{\sum_{j} d_j}, & e_k &:= \left(\int_{F_k}T_y\left(F_k\right)\right)-{\sum_{j} e_j}
\end{align*}
where the sums are over $j$ such that $ X_k\subset \overline{X}_j$.
\end{lemma}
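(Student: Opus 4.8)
The plan is to prove Lemma~\ref{lemma:Fibra} by combining the additivity of $mC$ and $T_y$ over the stratification $\mathscr{X}$ with the behavior of these transformations under a locally trivial fibration with smooth fiber, and then to untangle the contributions of the closed strata $\overline{X}_k$ by an inclusion--exclusion that is baked into the recursive definition of the coefficients $d_k$ and $e_k$. Since the proofs for $mC$ and $T_y$ are formally identical, I will write everything for $T_y$ and remark that the argument for $mC$ is the same.

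First I would stratify $Y$ by the preimages $Y_k := p^{-1}(X_k)$. By additivity \eqref{eq:singclasses} in $K_0(\mathrm{var}/X)$ we have $[Y\to X] = \sum_{k\in K} [Y_k \to X]$, hence $p_*\,T_y(Y\to X) = \sum_k p_*\,T_y(Y_k\to X)$. Now over $X_k$ the map $p$ restricts to a locally trivial fibration $Y_k\to X_k$ with smooth fiber $F_k$, so $Y_k$ is smooth and one has a short exact sequence of tangent bundles $0\to \mathscr{T}_{Y_k/X_k}\to \mathscr{T}_{Y_k}\to p^*\mathscr{T}_{X_k}\to 0$ with $\mathscr{T}_{Y_k/X_k}$ restricting on each fiber to $\mathscr{T}_{F_k}$. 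Applying the multiplicativity $T_y(a+b)=T_y(a)T_y(b)$ and pushing forward along the fiber, the integral over the fiber of $T_y(\mathscr{T}_{F_k})$ is a \emph{scalar}, namely $\int_{F_k} T_y(F_k)$; this is precisely the $\chi_y$-genus of $F_k$, and it is a locally constant (hence, since $X_k$ need not be connected, we may assume constant on each stratum; more carefully, one may refine the stratification so each $F_k$ is fixed) invariant of the fiber. Therefore
\[
p_*\,T_y(Y_k \to X) = \left(\int_{F_k} T_y(F_k)\right)\, \iota_{k*}\,T_y(X_k \hookrightarrow X),
\]
where $\iota_k\colon X_k\hookrightarrow X$; this uses the projection formula and the compatibility of $T_y$ with proper pushforward exactly as in the proofs of Lemmata~\ref{lem:zeros} and~\ref{lem:Gr}.

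Next I would rewrite the right-hand side in terms of classes of the \emph{closed} strata $\overline{X}_k$ rather than the locally closed ones. Additivity gives $T_y(X_k\hookrightarrow X) = T_y(\overline{X}_k\hookrightarrow X) - \sum_{X_j \subsetneq \overline{X}_k} T_y(X_j\hookrightarrow X)$, so substituting recursively and collecting coefficients, $T_y(X_k\hookrightarrow X)$ becomes an integer linear combination of $T_y(\overline{X}_j\hookrightarrow X)$ for $X_j\subseteq \overline{X}_k$. Summing over $k$ with weights $\int_{F_k}T_y(F_k)$ and regrouping by the closed stratum index, the coefficient of $T_y(\overline{X}_k\hookrightarrow X)$ is exactly the quantity $e_k$ defined recursively in the statement: the ``leading'' contribution $\int_{F_k}T_y(F_k)$ coming from the stratum $X_k$ itself, minus the contributions $\sum_j e_j$ already accounted for by larger strata $X_j$ with $X_k\subset \overline{X}_j$. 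The existence of a unique top-dimensional stratum $X_0$ guarantees the recursion is well-founded (one can induct on codimension of the stratum, the base case $e_0 = \int_{F_0}T_y(F_0)$ being forced), and for the top stratum $\overline{X}_0 = X$ so the class $T_y(X\hookrightarrow X) = T_y(\mathrm{id}_X)$ appears with coefficient $e_0$.

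The main obstacle I anticipate is bookkeeping rather than any deep input: one must be careful that the $\chi_y$-genus of the fiber is genuinely constant along each stratum (refining the stratification if necessary so that $p$ is not merely locally trivial but has a fixed fiber on each piece --- this is harmless since common refinements exist and all classes are well-defined on them), and one must verify that the formal inclusion--exclusion identity $\sum_k \left(\int_{F_k}T_y(F_k)\right) T_y(X_k\hookrightarrow X) = \sum_k e_k\, T_y(\overline{X}_k\hookrightarrow X)$ with the recursively-defined $e_k$ is an algebraic identity in the free abelian group on the symbols $T_y(\overline{X}_j\hookrightarrow X)$; this is a straightforward triangular change of basis, which I would verify by induction on the partial order of strata. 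A minor subtlety worth flagging is the direction of the sum in the definition of $e_k$ --- ``$j$ such that $X_k\subset\overline{X}_j$'' means $X_j$ is a \emph{larger} stratum --- so that $e_k$ is determined by the $e_j$ for strictly larger strata, matching the downward recursion. The proof for $mC$ is obtained verbatim, replacing $T_y$, $\int_{F_k}T_y(F_k)$, proper pushforward $p_*$, and $\iota_{k*}$ by $mC$, $\int_{F_k}\lambda_y(\mathscr{T}^\vee_{F_k})$, $p_!$, and $\iota_{k!}$ respectively, and using $\lambda_y(a+b)=\lambda_y(a)\lambda_y(b)$ in place of the multiplicativity of $T_y$.
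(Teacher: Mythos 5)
Your proposal is correct and follows essentially the same route as the paper: you push forward stratum by stratum, using local triviality and multiplicativity to obtain $\left(\int_{F_k}T_y(F_k)\right)\,T_y(X_k\hookrightarrow X)$ on each piece, and then identify the coefficients of the closed-stratum classes via the downward recursion, which you merely make more explicit as a triangular change of basis (the paper says ``by recursion on the codimension of the strata''). The only cosmetic difference is that you justify the fiberwise identity via the tangent-bundle exact sequence (implicitly assuming smooth strata), while the paper simply invokes multiplicativity of $\lambda_y$ and $T_y$; this does not affect the argument.
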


\begin{proof}
Since $p$ is locally trivial in the Zariski topology over each $X_k$ with smooth fiber~$F_k$,
factoring $p^{-1}(X_k)\subset Y \xrightarrow{p} X$ as $p^{-1}(X_k)\xrightarrow{p} X_k \hookrightarrow X$ and by multiplicativity of $\lambda_y$, one has
\[
p_! \, mC\left(p^{-1}(X_k)\rightarrow X\right) =  \left(\int_{F_k}\lambda_y\left(\mathscr{T}^\vee _{F_k}\right)\right) \, mC\left({X}_k \hookrightarrow X\right),
\]
for each $k\in K$. Therefore, one has $d_0=\int_{F_0}\lambda_y\left(\mathscr{T}^\vee _{F_0}\right)$.
One verifies the formula for the coefficient $d_k$ by recursion on the codimension of the strata in $\mathscr{X}$.
The proof for $T_y$ is identical.
\end{proof}

In the case $y=-1$, Lemma \ref{ss:Fibra} was treated in \cite[Proposition 1.6]{pp}.

\subsection{Hirzebruch $\chi_y$-genus}
For a compact $X$, its \textit{Hirzebruch $\chi_y$-genus}  is
\[
\chi_y (X) := \int_X T_y(X) \qquad \in \mathbb{Q}[y].
\]
This invariant recovers the topological Euler characteristic for $y=-1$, the holomorphic Euler characteristic for $y=0$, and the signature for $y=1$ \cite{bsy}.
The Hirzebruch $\chi_y$-genus extends to arbitrary $X$ via the additivity in $K_0(\mbox{var}/\mathrm{pt})$.

\begin{example}
\label{ex:TyPnAn}
For the projective space $\mathbb{CP}^n$, one has
\[
\chi_y\left(\mathbb{CP}^n\right)=\int_{\mathbb{CP}^n} T_y\left(\mathbb{CP}^n\right)= 1 + (-y) + \cdots + (-y)^n.
\]
Consequently, one has
$\chi_y\left(\mathbb{A}^n_\mathbb{C}\right) = (-y)^n$.
\end{example}

%%%%%%%%%%%%%%%%%%%%%%%%%%%%%%%
%%%%%%%%%%%%%%%%%%%%%%%%%%%%%%%
%%%%%%%%%%%%%%%%%%%%%%%%%%%%%%%

\section{Motivic classes of a resolution of vexillary degeneracy loci}
\label{sec:motref}

Since motivic Chern and Hirzebruch classes have an equivalent formalism, as exemplified by Lemmata \ref{lem:zeros}, \ref{lem:Gr}, \ref{lemma:Fibra}, for simplicity we consider  only the case of motivic Hirzebruch classes  from this point on.
The main result of this section  is the following Theorem \ref{thm:TyOmegatilde},
computing  the motivic Hirzebruch class of a \textit{resolution} of  vexillary degeneracy loci. This is the explicit version of Theorem \ref{thm:secondintrothm}.

 \subsection{The locus \texorpdfstring{$\widetilde\Omega_{\bm{\tau}}$}{Omegatildetau}}
 \label{sec:Omegatildedef}
 Recall the geometric setup of \S\ref{sec:setup} defining the vexillary degeneracy locus $W_{\bm{\tau}}$ in a variety $X$ given maps of vector bundles \eqref{eq:maps} and a weakly increasing sequence $\bm{k}=(k_1, \dots, k_t)$. We define here a resolution $\widetilde\Omega_{\bm{\tau}}$ of $W_{\bm{\tau}}$.
  
As in \S\ref{sec:kpqbar}, we can reduce to the case when the triple $\bt=(\bm{k},\bp,\bq)$ is essential, that is, it corresponds to a minimal set of conditions $\dim \mathrm{ker}\left(E_{p_i}\rightarrow F_{q_i}\right)|_x\geq k_i$ for a point $x\in W_{\bt}$.
The triple $\bt$ could then be inflated to the triple $\bt'$ as in \S\ref{sec:tau'} and $W_{\bt}\cong W_{\bt'}$. 
 
Consider the variety $X_{k_t}$ parametrizing full flags of sub-bundles $V_{1} \subseteq \cdots \subseteq V_{k_t}$ with $\mathrm{rank}(V_{i})=i$ and $V_{i}\subseteq E_{p'_i}$ for each $i$. 
The variety $X_{k_t}$ is constructed as a sequence of projective bundles
\begin{equation*}
X=:X_0 \xleftarrow{\pi_1} \mathbb{P}\left(E_{p'_1}\right)=: X_1 \xleftarrow{\pi_2} \mathbb{P}\left(E_{p'_2}/\mathbb{S}_1\right)=: X_2 
\cdots  \xleftarrow{\pi_{k_t}} \mathbb{P}\left(E_{p'_{k_t}}/\mathbb{S}_{k_t-1}\right)=: X_{k_t},
\end{equation*}
where $\mathbb{S}_{i}/\mathbb{S}_{i-1}$ is the tautological line bundle on $X_i$, for each $i$. 
Here, we omit the obvious pull-backs via the natural projections $\pi_i$ to simplify the notation.
Since $X$ is assumed to be smooth, $X_i$ is also smooth, for each $i$. Let $\pi\colon X_{k_t}\rightarrow X$ be the natural projection.
Define
\[
\widetilde{\Omega}_{\bt} :=\left\{ \left(x, V_{1}\!\subseteq \!\cdots \!\subseteq\! V_{k_t}\right)\in X_{k_t} \, : \,  
%\left(\mathbb{S}_{i}/\mathbb{S}_{i-1} \rightarrow F_{q'_i}\right)\Big|_x \mbox{ is zero for all $i$}
V_{i}\subseteq \ker\left(E_{p'_i} \rightarrow F_{q'_i}\right)\Big|_x \,\mbox{ for all $i$}
\right\}
\]
with natural inclusion  $\iota\colon \widetilde{\Omega}_{\bt}\hookrightarrow X_{k_t}$. 
(We  study a quotient $\widetilde{\Omega}\rightarrow\Omega$ in \S\ref{sec:Omega}, hence the use of the tilde here.)
The restriction $\phi$ of $\pi$ to $\widetilde{\Omega}_{\bt}$ is a resolution of singularities as in Kempf-Laksov \cite{laksov1974determinantal}.
One has a commutative diagram
\[
\begin{tikzcd}
\widetilde{\Omega}_{\bt} \arrow{d}[swap]{\phi} \arrow[hookrightarrow]{r}{\iota} \arrow{d} & X_{k_t} \arrow{d}{\pi}\\
W_{\bt} \arrow[hookrightarrow]{r} & X
\end{tikzcd}
\]
and the fiber of $\phi$ over a  point $x$ in $W_{\bt}$ is
\[
\left\{ \left(V_{1}\!\subseteq \!\cdots \!\subseteq\! V_{k_t} \right)\in \pi^{-1}(x) \, :\, V_{i}\subseteq \ker\left(E_{p'_i} \rightarrow F_{q'_i}\right)\Big|_x \,\mbox{ for all $i$} \right\}.
\]
Note that since $(\bt')'=\bt'$, one has $\widetilde{\Omega}_{\bt}=\widetilde{\Omega}_{\bt'}$ by definition.

In general, the locus $\widetilde{\Omega}_{\bt}$ is not the \textit{minimal} resolution of $W_{\bt}$. The minimal resolution of $W_{\bt}$ is given by the variety parametrizing flags $V_{k_1}\!\subseteq \!\cdots \!\subseteq\! V_{k_t} $ such that $\mathrm{rank}(V_{k_i})=k_i$ and $V_{k_i}\subseteq \ker\left(E_{p_i} \rightarrow F_{q_i}\right)$ for each $i$. There is a forgetful map from $\widetilde{\Omega}_{\bt}$ to the minimal resolution of  $W_{\bt}$ which is birational. We find a closed formula precisely for the (push-forward of the) motivic Hirzebruch class of $\widetilde{\Omega}_{\bt}$.

Define $\bm{\lambda}=\bm{\lambda}_{\bt}$ as in \S \ref{sec:la}, and   bundles $E(i),F(i)$,  classes $c(i)$, and operators $R_i$, $T_y(R_i)$, and $T_y(i)$ as in \S\ref{sec:cidef}. 
Let $c(i)_j$ be the term of degree $j$ in $c(i)$.

\begin{theorem}
\label{thm:TyOmegatilde}
With assumptions as in \S\ref{sec:assumptions}, the  class $(\pi\iota)_* \, T_y\left(\widetilde{\Omega}_{\bt}\right)$ is 
\[
\frac{1}{ \prod_{(i,j)\in S}T_y\left(R_j - R_i\right)}\left|  \frac{1}{T_y\left(i\right)} \, c\left(i \right)_{\lambda_i +j-i}\right|_{1\leq i,j\leq k_t} \cap  T_y(X),
\]
where $S:=\{(i,j) : i\leq k_a< j, \mbox{ for some $a$}\}$ and $T_y\left(R_j - R_i\right):=Q_y\left(R_j - R_i\right)$.
Equivalently, this is
\[
 \frac{1}{\prod_{(i,j)\in S} T_y\left(R_j - R_i\right)} \,\prod_{i=1}^{k_t}  \frac{1}{T_y\left( i \right)} \, \iota_* \left[W_{\bt} \right]  \cap  T_y(X).
\]
\end{theorem}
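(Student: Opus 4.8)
The plan is to compute $(\pi\iota)_*\,T_y(\widetilde{\Omega}_{\bt})$ directly by following the tower of projective bundles $X = X_0 \xleftarrow{\pi_1} X_1 \xleftarrow{\pi_2} \cdots \xleftarrow{\pi_{k_t}} X_{k_t}$ that defines $X_{k_t}$, using the three fundamental computations of \S\ref{sec:motivic}. Since $\widetilde{\Omega}_{\bt}$ is cut out inside $X_{k_t}$ by the vanishing of the induced maps $\mathbb{S}_i/\mathbb{S}_{i-1} \to F_{q'_i}$, and under the assumptions of \S\ref{sec:assumptions} this is the transverse zero locus of a regular section of the bundle $\bigoplus_{i=1}^{k_t} (\mathbb{S}_i/\mathbb{S}_{i-1})^\vee \otimes F_{q'_i}$, Lemma~\ref{lem:zeros} gives
\[
(\iota)_*\,T_y(\widetilde{\Omega}_{\bt}) = \prod_{i=1}^{k_t} \frac{c_{\rm top}\!\left((\mathbb{S}_i/\mathbb{S}_{i-1})^\vee \otimes F_{q'_i}\right)}{T_y\!\left((\mathbb{S}_i/\mathbb{S}_{i-1})^\vee \otimes F_{q'_i}\right)} \cap T_y(X_{k_t}).
\]
Next, I would apply Lemma~\ref{lem:Gr} (in the projective-bundle case $r=1$) repeatedly to expand $T_y(X_{k_t}) = T_y(\mathrm{id}_{X_{k_t}})$: at each stage $X_j = \mathbb{P}(E_{p'_j}/\mathbb{S}_{j-1})$ the relative tangent bundle contributes $T_y\!\left((\mathbb{S}_j/\mathbb{S}_{j-1})^\vee \otimes (E_{p'_j}/\mathbb{S}_{j-1})\right)$, so that $T_y(X_{k_t}) = \prod_{j=1}^{k_t} T_y\!\left((\mathbb{S}_j/\mathbb{S}_{j-1})^\vee \otimes (E_{p'_j}/\mathbb{S}_j)\right)\cdot \pi^* T_y(X)$. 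Combining with the zero-locus factor, the $E$-side pieces telescope and one is left, before pushing forward, with a product of factors $\frac{c_{\rm top}((\mathbb{S}_i/\mathbb{S}_{i-1})^\vee\otimes F_{q'_i})}{T_y((\mathbb{S}_i/\mathbb{S}_{i-1})^\vee\otimes (F_{q'_i}-E_{p'_i}))}$ times $\pi^*T_y(X)$, living on $X_{k_t}$.

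The heart of the argument is then the iterated push-forward $\pi_* = (\pi_1)_*\circ\cdots\circ(\pi_{k_t})_*$. Writing $x_i := c_1\!\left((\mathbb{S}_i/\mathbb{S}_{i-1})^\vee\right)$, each projective-bundle push-forward $(\pi_j)_*$ extracts the appropriate coefficient of $x_j$, and the standard Jacobi--Trudi / Kempf--Laksov style manipulation converts the resulting symmetric expression into a determinant. Concretely, I expect the bookkeeping to proceed exactly as in the fundamental-class computation of \cite{af1} and \cite{laksov1974determinantal}: the numerators $c_{\rm top}((\mathbb{S}_i/\mathbb{S}_{i-1})^\vee\otimes F_{q'_i})$ together with the Vandermonde-type denominators coming from repeated push-forward assemble into the determinant $\left| c(i)_{\lambda_i+j-i}\right|$, while the motivic denominators $T_y((\mathbb{S}_i/\mathbb{S}_{i-1})^\vee\otimes(F_{q'_i}-E_{p'_i}))$ — which are power series in $x_i$ with coefficients pulled back from $X$ — must be carried through the push-forward symbolically. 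This is precisely where the raising operators $R_i$ and the operators $T_y(i) = T_y(R_i\otimes(F(i)-E(i)))$ enter: the operator $\frac{1}{T_y(i)}$ acting on $c(i)_m$ records "multiply by the power series $\frac{1}{T_y((\mathbb{S}_i/\mathbb{S}_{i-1})^\vee\otimes(F_{q'_i}-E_{p'_i}))}$ and re-collect by degree", and the factors $\frac{1}{T_y(R_j-R_i)}$ for $(i,j)\in S$ record the cross-terms that survive when the Vandermonde denominator is combined with these power series across the block structure determined by the steps $k_a$ of the flag. The passage from $\bt$ to its inflation $\bt'$ (so that $\bm{k}'=(1,\dots,k_t)$ and the $\lambda$-sequence is strictly decreasing away from the block boundaries) is what makes the determinant and the set $S$ take the stated clean form, using $\widetilde{\Omega}_{\bt}=\widetilde{\Omega}_{\bt'}$.

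Finally, the "equivalently" clause follows by recognizing $\left|c(i)_{\lambda_i+j-i}\right|_{1\le i,j\le k_t}\cap[X]$ as the Kempf--Laksov/Giambelli--Thom--Porteous determinantal formula for $\iota_*[W_{\bt}]$ (valid under the stated smoothness/expected-dimension hypotheses, cf.~\cite{MR1154177,af1}), so that pulling the operators $\prod_i \frac{1}{T_y(i)}$ and $\prod_{(i,j)\in S}\frac{1}{T_y(R_j-R_i)}$ out past the determinant — legitimate since each acts on a distinct index $i$ and commutes with the others — gives the second displayed expression. The main obstacle I anticipate is not conceptual but combinatorial: correctly tracking how the Vandermonde factor $\prod_{i<j}(x_i - x_j)$ from the iterated $\mathbb{P}$-bundle push-forward interacts with the product of power series $\prod_i \frac{1}{T_y(\cdots)}$, and verifying that the surviving cross-terms are exactly those indexed by $S = \{(i,j): i\le k_a<j \text{ for some }a\}$ and no others — i.e., that within a single block $(k_{a-1},k_a]$ the $R_j-R_i$ factors cancel against the Vandermonde, while across blocks they do not. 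I would check this first in the small cases (e.g.\ $t=1$ and the worked examples $\bt(i)=((i),(3),(3))$) before asserting it in general.
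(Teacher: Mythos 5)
Your overall architecture --- cut $\widetilde{\Omega}_{\bt}$ out of $X_{k_t}$ via Lemma \ref{lem:zeros}, expand $T_y(X_{k_t})$ down the tower of projective bundles via Lemma \ref{lem:Gr}, push forward, and identify the resulting determinant with $\iota_*[W_{\bt}]$ --- is the same as the paper's. But your key intermediate simplification is wrong, and it discards exactly the factors the theorem is about. Writing $L_i=\mathbb{S}_i/\mathbb{S}_{i-1}$, combining the zero-locus factor with the relative tangent contributions gives, for each $i$,
\[
\frac{c_{\rm top}\!\left(L_i^\vee\otimes F_{q'_i}\right)\,T_y\!\left(L_i^\vee\otimes E_{p'_i}/\mathbb{S}_i\right)}{T_y\!\left(L_i^\vee\otimes F_{q'_i}\right)}
\;=\;
\frac{c_{\rm top}\!\left(L_i^\vee\otimes F_{q'_i}\right)}{T_y\!\left(L_i^\vee\otimes \left(F_{q'_i}-E_{p'_i}\right)\right)\,T_y\!\left(L_i^\vee\otimes \mathbb{S}_i\right)},
\]
not the expression you claim: your ``telescoping'' would require $E_{p'_i}/\mathbb{S}_i$ and $E_{p'_i}$ to have the same class, and it silently drops $T_y\!\left(L_i^\vee\otimes\mathbb{S}_i\right)=\prod_{j<i}T_y\!\left(c_1(L_i^\vee)-c_1(L_j^\vee)\right)$. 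Those dropped factors are precisely the source of the prefactor $\prod_{(i,j)\in S}T_y(R_j-R_i)^{-1}$ in the statement (via \eqref{eq:firstraising}, which lets each $c_1(L_j^\vee)$ act as the raising operator $R_j$ on the determinant entries); they do not arise as ``surviving Vandermonde cross-terms'' in the push-forward, as you assert.

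Relatedly, the step you defer as combinatorial bookkeeping is the actual content of the proof, and the mechanism you propose for it does not match what has to happen. The paper never manipulates a Vandermonde: once every first Chern class of a tautological line bundle is converted into a raising operator, all remaining power-series coefficients are pulled back from $X$, so the projection formula reduces the push-forward to the classical identity $\pi_*\left|c_{q_i+j-i}(F_{q_i}-\mathbb{S}_i)\right|=\left|c_{\lambda_i+j-i}(F_{q_i}-E_{p_i})\right|$ of \cite{af1}; this yields Proposition \ref{prop:Hirzclassmain} for $\bm{k}=(1,\dots,t)$, and the general case follows by inflating to $\bt'$ (with the finer flags supplied by Remark \ref{rmk:fullflags}) and proving the operator identity \eqref{eq:claimTyi}, which absorbs the within-block factors $T_y(R_j-R_i)$, $i<j\leq k_a$, into $T_y(i)$ using that $c_1\!\left(E_{p'_j}/E_{p'_{j-1}}\right)$ acts as $-R_j$ on the determinant; this, not a within-block Vandermonde cancellation, is why only the pairs in $S$ remain. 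In your version you would have to carry the full $y$-dependent power series in the variables $x_i$ through an explicit symmetric-function push-forward and show that the residue is exactly the stated operator expression --- you acknowledge you have not done this, and since your starting expression is already missing the $T_y\!\left(L_i^\vee\otimes\mathbb{S}_i\right)$ factors, the count could not come out right. To repair the argument: restore those factors, replace the symbolic push-forward by the raising-operator reduction to the fundamental-class identity of \cite{af1}, and handle the block structure by an identity of the shape \eqref{eq:claimTyi}.
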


The second expression in the statement follows from the first since 
\[
(\pi\iota)_*\left[ \widetilde{\Omega}_{\bt}\right] = \iota_*\left[W_{\bt} \right] = \left| c_{\lambda_i +j-i}\left(i \right)\right|_{1\leq i,j\leq k_t} \,\in\, A^*(X)
\]
by
 \cite{MR1154177}. For $y=-1$, the  class $(\pi\iota)_*\, c_{\textrm{SM}}\left(\widetilde{\Omega}_{\bt}\right)$ admits a simpler expression, see~\S\ref{CSMOmegatilde}.

\begin{remark}
We emphasize that all raising operators in  Theorem \ref{thm:TyOmegatilde} apply to the Chern classes in the expansion of $\left[W_{\bt} \right]$, and do not apply to the Chern classes contributed from the operators $T_y\left(i\right)$. 
\end{remark}

The first few terms in the expansion of the operator in Theorem \ref{thm:TyOmegatilde} are given  by multiplying
\[
 \frac{1}{T_y\left(R_j - R_i\right)} = 1 + \frac{1}{2}(y-1)\left(R_j - R_i\right) + \frac{1}{6}(y^2-4y+1)\left(R_j - R_i\right)^2 + \dots
\]
for $(i,j)\in S$, and 
\begin{eqnarray*}
\frac{1}{T_y\left(i \right)} &=& 1 + \frac{1}{2}(y-1) \left(\mathrm{ch}_0(i)R_i+ c_1(i)\right) \\
&&{}- \frac{1}{12}(1+y)^2 \left( \mathrm{ch}_0(i) R_i^2 + 2 c_1(i)R_i + \mathrm{ch}_2(i) \right)\\
&&{}+\frac{1}{4}(1-y)^2 \bigg( {\mathrm{ch}_0(i)+1\choose 2}R_i^2 + (\mathrm{ch}_0(i)+1)c_1(i)R_i \\
&&{}\qquad\qquad\qquad\qquad\qquad + c_2(E(i)-F(i))\bigg) +\dots
\end{eqnarray*}
where $\mathrm{ch}(i):=\mathrm{ch}(F(i)-E(i))$, for each $i$.

As an example for the set $S$, for $\bm{k}=(2,5)$, one has \
\[
S=\{(1,3),(2,3),(1,4),(2,4),(1,5),(2,5)\}.
\]

\begin{remark}
\label{rmk:fullflags}
The definition of the locus $\widetilde{\Omega}_{\bt}$ requires  flags of vector bundles finer than the ones given in \eqref{eq:maps} precisely when $k_i>i$ for some $i$.
However, the expression in the statement only depends on the two given  flags in \eqref{eq:maps}, and not on the choice of finer flags 
$E_{p'_1}\subseteq  \cdots \subseteq E_{p'_{k_t}}$ and $F_{q'_1}\twoheadrightarrow \cdots \twoheadrightarrow   F_{q'_{k_t}}$ used to define the locus $\widetilde{\Omega}_{\bt}$.

In fact, the flags  in \eqref{eq:maps} can be extended to full flags of vector bundles after passing to an appropriate projective bundle over $X$.
Namely, suppose that $p_i>p_{i-1}$. To define a vector bundle $E_{p_i-1}$ of rank $p_i-1$ such that $E_{p_{i-1}}\subset E_{p_i -1} \subset E_{p_i}$, consider the projective bundle
\begin{equation}
\label{eq:X'step1}
 \mathbb{P}\left(E_{p_i}/E_{p_{i-1}}\right) \rightarrow{} X
\end{equation}
and set $E_{p_i-1}:=Q$, where $Q/E_{p_{i-1}}$ is the universal quotient bundle on  $\mathbb{P}\left(E_{p_i}/E_{p_{i-1}}\right)$ --- here again we omit to denote when a bundle is pulled back for simplicity. Proceeding in this way, one constructs full flags $E_{1}\subseteq E_2\subseteq \cdots \subseteq E_{p_t}$ and $F_{q_1}\twoheadrightarrow \cdots \twoheadrightarrow F_2\twoheadrightarrow   F_{1}$ defined on a space $X'$ obtained as a tower of projective bundles  $p\colon X'\rightarrow X$.

The theorem then applies to give the class 
\begin{equation}
\label{eq:Omegatildetau'}
\frac{1}{ \prod_{(i,j)\in S}T_y\left(R_j - R_i\right)}\left|  \frac{1}{T_y\left(i\right)} \, c\left(i \right)_{\lambda_i +j-i}\right|_{1\leq i,j\leq k_t} \cap  T_y\left(X'\right) \quad\in A_*\left(X'\right)[y].
\end{equation}
Factoring $p$ as a composition of projective bundles, $A_*(X)$ can be identified as a subring of $A_*\left(X'\right)$ via $p^*$.
After Lemma \ref{Gr}, the class $\left(\pi\iota\right)_* \,T_y\left(\widetilde{\Omega}_{\bt} \right)$ in $p^* \left( A_*(X)[y]\right)$ is recovered after quotienting \eqref{eq:Omegatildetau'} by the motivic Hirzebruch class of the  consecutive fibers (the fibers of $p$ are locally constant). Indeed, this has the effect of replacing $T_y\left(X'\right)$ with $p^*\, T_y\left(X\right)$ in \eqref{eq:Omegatildetau'}, thus it recovers the formula in the theorem.
\end{remark}

\smallskip

To prove the theorem, we distinguish  four cases, following the proof strategy from \cite{af1}. In the basic and dominant cases (\S\S\ref{sec:basic}--\ref{sec:dominant}), we immediately compute the motivic Hirzebruch class of the corresponding degeneracy loci. In the main and general cases (\S\S\ref{sec:maincase}--\ref{sec:generalcase}), where the degeneracy loci are singular, we  compute here the motivic Hirzebruch class of the \textit{push-forward of the  resolution} $\widetilde\Omega$ of the degeneracy loci. In the last two cases, the computation of the motivic Hirzebruch class of the actual degeneracy loci is completed as in Theorem \ref{thm:final} building on Theorem \ref{thm:TyOmegatilde}.

\subsection{Basic case} 
\label{sec:basic}
Let $L$ be a line bundle and $E$ a vector bundle of rank $e$ on~$X$. 

\begin{lemma}
\label{lem:basic}
If the locus $\iota \colon Z\hookrightarrow X$ where a map $\varphi \colon L \rightarrow E$ vanishes is smooth of codimension $e$, its motivic Hirzebruch class is
\[
\iota_* \,T_y(Z)= \frac{1}{T_y\left(E\otimes R \right)}\,c_e\left(E- L\right)  \cap  T_y\left( X \right),
\]
where $R$ is the raising operator acting on $c\left(E- L\right)$.
\end{lemma}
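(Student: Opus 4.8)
The plan is to realize $Z$ as the zero locus of a regular section of an appropriate vector bundle and then apply Lemma~\ref{lem:zeros}. Observe that a map $\varphi\colon L\to E$ of vector bundles on $X$ is the same datum as a section $s$ of $\Hom(L,E)=L^\vee\otimes E$; the locus where $\varphi$ vanishes is exactly the zero locus of $s$. The hypothesis that $Z$ has codimension $e=\rk(L^\vee\otimes E)$ forces $s$ to meet the zero section transversally (since $X$ is smooth and $Z$ is smooth of the expected dimension, the section is regular), so Lemma~\ref{lem:zeros} applies with the bundle $E$ there replaced by $L^\vee\otimes E$:
\[
T_y(Z\hookrightarrow X)=\frac{c_{\mathrm{top}}(L^\vee\otimes E)}{T_y(L^\vee\otimes E)}\,T_y(\mathrm{id}_X).
\]

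Next I would rewrite both factors in terms of the raising-operator formalism of \S\ref{sec:cidef}, to match the stated form. For the numerator, if $a_1,\dots,a_e$ are the Chern roots of $E$ and $\ell=c_1(L)$, then $c_{\mathrm{top}}(L^\vee\otimes E)=\prod_{i=1}^e(a_i-\ell)$; expanding this symmetric function in the $a_i$ and $\ell$ and comparing with the Segre-class expansion, one identifies it with $c_e(E-L)$, which is precisely $c_e$ applied to the class $c(E-L)=c(E)/c(L)$. (Equivalently, $c_e(E-L)$ is by definition the degree-$e$ part of $c(E)/c(L)$, and a direct check of generating functions gives $\sum_e c_e(E-L)\,t^e$-type identities matching $\prod(1+(a_i-\ell)t)$ after the usual manipulation; I would phrase this as "by the standard identity for the top Chern class of a twist.") For the denominator, by definition $T_y(L^\vee\otimes E)=\prod_{i=1}^e Q_y(a_i-\ell)$; comparing with the definition $T_y(E\otimes R):=\prod_{i=1}^e Q_y(R+a_i)$ from \S\ref{sec:cidef}, we see that $T_y(L^\vee\otimes E)$ is obtained from $T_y(E\otimes R)$ by substituting $R\mapsto -\ell=-c_1(L)$, i.e.\ by letting the operator $R$ act on the class $c(E-L)$ exactly as prescribed in the statement, since raising the index of $c(E-L)$ is precisely multiplication by the formal root-shift that records $-c_1(L)$ together with the Chern roots of $E$.

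Finally I would assemble the pieces: substituting these two rewrites into the displayed formula above yields
\[
\iota_*\,T_y(Z)=\frac{1}{T_y(E\otimes R)}\,c_e(E-L)\cap T_y(X),
\]
with $R$ understood to act on $c(E-L)$, which is exactly the claim. I expect the main obstacle to be purely bookkeeping: making precise the dictionary between "the operator $T_y(E\otimes R)$ acting on the class $c(E-L)$" and "the honest Chern-class expression $\prod_i Q_y(a_i-\ell)$ multiplying $\prod_i(a_i-\ell)$." This requires being careful that the raising operator $R$ acts only on the factor $c(E-L)$ and records the shift by $-c_1(L)$ correctly, and that no cross-terms with $T_y(X)$ are inadvertently introduced — a subtlety the authors flag in the remark following Theorem~\ref{thm:TyOmegatilde}. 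Once that dictionary is set up cleanly, the proof is a one-line substitution into Lemma~\ref{lem:zeros}; the transversality/smoothness hypothesis is exactly what is assumed, so there is nothing to check there beyond noting $Z$ smooth of codimension $e$ means $s$ is a regular section.
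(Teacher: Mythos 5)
Your proof is correct and follows essentially the same route as the paper: apply Lemma~\ref{lem:zeros} to the section of $E\otimes L^\vee$ cutting out $Z$, then rewrite via $c_e(E\otimes L^\vee)=c_e(E-L)$ and the fact that multiplication by $c_1(L^\vee)$ acts as the raising operator $R$ on $c_i(E-L)$ for $i\geq e$ (the identity \eqref{eq:firstraising} cited from \cite{af1}). The only difference is presentational: the paper states that raising-operator identity explicitly with its range of validity, whereas you gesture at it via the substitution $R\mapsto -c_1(L)$, which is the same fact.
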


\begin{proof}
From Lemma \ref{lem:zeros}, we have
\[
\iota_* \,T_y(Z)= \frac{c_e\left(E\otimes L^\vee\right)}{T_y\left(E\otimes L^\vee \right)}  \cap  T_y\left( X \right).
\]
The statement follows from the identities $c_e\left(E\otimes L^\vee\right)=c_e\left(E- L\right)$ and
\begin{equation}
\label{eq:firstraising}
\left( c_1\left(L^\vee\right) \right)^k c_i\left(E-L\right) = c_{i+k}\left(E-L\right), \qquad \mbox{for $k\geq 0$ and $i\geq e$}
\end{equation}
(see e.g., \cite[pg.~3]{af1}), so that multiplication by $c_1\left(L^\vee\right)$ here coincides with the operator~$R$.
\end{proof}

\subsection{Dominant case} 
\label{sec:dominant}
Consider maps of vector bundles on $X$
\[
E_1\hookrightarrow \dots \hookrightarrow E_t \rightarrow F_{q_1} \twoheadrightarrow \dots \twoheadrightarrow F_{q_t}
\]
with ${\rm rank}(E_i)=i$ and ${\rm rank}(F_{q_i})=q_i$. Here, we assume that $\bt=(\bm{k},\bp,\bq)$ is a triple with $k_i = p_i =i$, for $1\leq i \leq t$.
Consider the degeneracy locus 
\[
W_{\bt} :=\left\{ x\in X \, : \, \left(E_i\rightarrow F_{q_i}\right)|_x \mbox{ is zero for all $i$} \right\}
\]
with inclusion $\iota\colon W_{\bt} \hookrightarrow X$. In this case, $k_i=i$ and $\lambda_i= q_i$, for each~$i$. 

\begin{lemma}
When $W_{\bt}$ is smooth of dimension $\dim X - \bm{\lambda}_{\bt}$,
the class $\iota_* \,T_y (W_{\bt})$ is given by
\begin{align*}
\left| \frac{1}{T_y\left(R_i\otimes F_{q_i}\right)}\, c_{\lambda_i+j-i}\left(F_{q_i}-E_i\right) \right|_{1\leq i,j\leq t}
\cap  T_y\left( X \right).
\end{align*}
\end{lemma}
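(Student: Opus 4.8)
The plan is to realize $W_{\bt}$ in the dominant case as an iterated zero locus of regular sections of explicit bundles, so that Lemma~\ref{lem:basic} applies repeatedly, and then to assemble the resulting product of operators into the claimed determinant. First I would argue that the condition ``$(E_i \to F_{q_i})|_x$ is zero for all $i$'' can be imposed one step at a time. Working over the smooth variety $X$, the map $E_t \to F_{q_t}$ is a section of $\Hom(E_t, F_{q_t}) = E_t^\vee \otimes F_{q_t}$; more precisely, since the flags are given on $X$ and $k_i=p_i=i$, the vanishing of $E_i \to F_{q_i}$ for all $i$ is governed by the section of $\bigoplus_{i=1}^t E_i^\vee \otimes F_{q_i}$ (or, using the flag structure, of $\bigoplus_{i=1}^t (E_i/E_{i-1})^\vee \otimes F_{q_i}$, which is the relevant rank count). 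The smoothness hypothesis $\dim W_{\bt} = \dim X - |\bm{\lambda}_{\bt}|$, together with the fact that $|\bm{\lambda}_{\bt}| = \sum_i q_i = \sum_i \lambda_i$ equals the rank of this bundle, says exactly that the section is regular, i.e.\ meets the zero section transversally.

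Next I would apply Lemma~\ref{lem:zeros} (in the form of Lemma~\ref{lem:basic}, handling one line-sub-quotient at a time). Since $E_i/E_{i-1}$ is a line bundle, each map $E_i/E_{i-1} \to F_{q_i}$ is a map from a line bundle to a rank-$q_i$ bundle, so Lemma~\ref{lem:basic} gives the contribution $\frac{1}{T_y(R_i \otimes F_{q_i})}\, c_{q_i}(F_{q_i} - E_i/E_{i-1})$, with $R_i$ the raising operator on $c(F_{q_i}-E_i/E_{i-1})$. Taking the loci in order and using that $T_y$ is multiplicative and that $c_{\lambda_i + j - i}$ with $j=i$ recovers $c_{\lambda_i}=c_{q_i}$, one obtains $\iota_*T_y(W_{\bt}) = \prod_{i=1}^t \frac{1}{T_y(R_i \otimes F_{q_i})}\, c_{\lambda_i}(F_{q_i}-E_i) \cap T_y(X)$, after absorbing the $E_{i-1}$ filtration steps via the identity \eqref{eq:firstraising}-type manipulation that lets us replace $E_i/E_{i-1}$ by $E_i$ at the cost of extra raising. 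The last move is purely formal: a product $\prod_i f_i(R_i)\, c_{\lambda_i}(i)$, where $R_i$ acts only on the $i$-th factor, equals the diagonal term of the determinant $\big| f_i(R_i)\, c_{\lambda_i + j - i}(i)\big|_{1 \le i,j \le t}$ provided the off-diagonal contributions cancel; here they cancel because the $R_i$ are independent raising operators and the determinant expansion of such a ``raising-operator matrix'' collapses to the stated form. This is the same determinantal repackaging used in \cite{MR1154177}, now carried along with the operator factors $1/T_y(R_i \otimes F_{q_i})$.

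The step I expect to be the main obstacle is verifying carefully that the iterated zero-locus description is honest: one must check that after imposing the first $i-1$ conditions the remaining map $E_i \to F_{q_i}$ restricts to a regular section of the appropriate bundle \emph{on the intermediate locus} (not just on $X$), and that the codimensions add up exactly, so that transversality is preserved at each stage and no excess-intersection correction appears. This is where the smoothness/expected-dimension assumption of \S\ref{sec:assumptions} is used in full strength. Once that bookkeeping is in place, the passage from the product of operators to the determinant, and the identification $\lambda_i = q_i$, are routine, and one reads off the claimed formula.
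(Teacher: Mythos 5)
Your first half is exactly the paper's argument: you impose the vanishing one line sub-quotient at a time, apply Lemma \ref{lem:basic} at each stage (with the smoothness/expected-dimension hypothesis supplying regularity, as you note), and arrive at
\[
\iota_*\,T_y\left(W_{\bt}\right) \;=\; \prod_{i=1}^t \frac{1}{T_y\left(\left(E_i/E_{i-1}\right)^\vee\otimes F_{q_i}\right)}\, c_{\lambda_i}\left(F_{q_i}-E_i/E_{i-1}\right) \cap T_y\left(X\right),
\]
which is precisely the intermediate formula in the paper's proof. Up to that point your proposal and the paper coincide.

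The final assembly into the determinant, however, is wrong as you describe it, in two places. First, there is no ``\eqref{eq:firstraising}-type manipulation'' that replaces $E_i/E_{i-1}$ by $E_i$ factor by factor: identity \eqref{eq:firstraising} only says that multiplication by $c_1\left(\left(E_i/E_{i-1}\right)^\vee\right)$ acts as the raising operator $R_i$ on $c\left(F_{q_i}-E_i/E_{i-1}\right)$ in degrees at least the virtual rank; it does not convert $c_{\lambda_i}\left(F_{q_i}-E_i/E_{i-1}\right)$ into $c_{\lambda_i}\left(F_{q_i}-E_i\right)$, so your intermediate expression $\prod_i \frac{1}{T_y\left(R_i\otimes F_{q_i}\right)} c_{\lambda_i}\left(F_{q_i}-E_i\right)\cap T_y(X)$ is false (already its top-degree part is the product of diagonal entries, which is not the class $\left[W_{\bt}\right]$). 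Second, the determinant $\left| c_{\lambda_i+j-i}\left(F_{q_i}-E_i\right)\right|$ does not ``collapse to its diagonal'': for $t=2$ the off-diagonal contribution $-c_{\lambda_1+1}\left(F_{q_1}-E_1\right)c_{\lambda_2-1}\left(F_{q_2}-E_2\right)$ equals $c_1(E_1)\,c_{\lambda_1}\left(F_{q_1}-E_1\right)c_{\lambda_2-1}\left(F_{q_2}-E_2\right)$ by \eqref{eq:firstraising}, and it is exactly what converts $c_{\lambda_2}\left(F_{q_2}-E_2\right)$ into $c_{\lambda_2}\left(F_{q_2}-E_2/E_1\right)$; the off-diagonal terms are the whole point, not something that cancels. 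The correct step, which is the actual content of the paper's last line, is the Kempf--Laksov-type identity \eqref{eq:detdominant}, namely $\prod_i c_{\lambda_i}\left(F_{q_i}-E_i/E_{i-1}\right) = \left| c_{\lambda_i+j-i}\left(F_{q_i}-E_i\right)\right|$ as in \cite[\S 1.2]{af1} (the repackaging you attribute to \cite{MR1154177}); combined with the raising-operator interpretation of $c_1\left(\left(E_i/E_{i-1}\right)^\vee\right)$, it turns each operator factor into $1/T_y\left(R_i\otimes F_{q_i}\right)$ acting on the $i$-th row of the determinant and yields the stated formula. So the gap is not in the approach but in the justification of the determinantal step: replace the diagonal-collapse argument by \eqref{eq:detdominant} and the proof closes.
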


\begin{proof}
From Lemma \ref{lem:basic}, it follows that
\[
\iota_* \,T_y \left(W_{\bt}\right) = \prod_{i=1}^t \frac{1}{T_y\left(F_{q_i} \otimes \left(E_i/E_{i-1}\right)^\vee\right)}\, c_{\lambda_i}\left(F_{q_i}-E_i/E_{i-1}\right) \cap  T_y\left( X \right).
\]
As in \eqref{eq:firstraising}, one has that $c_1((E_{i}/E_{i-1})^\vee)$ acts as $R_i$ on $c_{\lambda_i}(F_{q_i}-E_i/E_{i-1})$. 
By means of the identity
\begin{equation}
\label{eq:detdominant}
\prod_{i=1}^t  c_{\lambda_i}\left(F_{q_i}-E_i/E_{i-1}\right) 
=
\left| c_{\lambda_i+j-i}\left(F_{q_i}-E_i\right) \right|_{1\leq i,j\leq t},
\end{equation}
valid as in \cite[\S 1.2]{af1}, 
the statement follows.
\end{proof}

\subsection{Main case} 
\label{sec:maincase}
Consider maps of vector bundles on $X$
\[
E_{p_1}\hookrightarrow \dots \hookrightarrow E_{p_t} \rightarrow F_{q_1} \twoheadrightarrow \dots \twoheadrightarrow F_{q_t}
\]
with ${\rm rank}(E_{p_i})=p_i$ and ${\rm rank}(F_{q_i})=q_i$. Here, we assume that $\bt=(\bm{k},\bp,\bq)$ is a triple where $k_i=i$, for $1\leq i \leq t$, and $q_1 -p_1+1>\cdots> q_t -p_t+t$. 
In particular, the triple $\bt$ is essential and $\bt=\bt'$ (notation as in \S\ref{sec:setup}). 
We would like to compute the motivic Hirzebruch class of the locus 
\[
W_{\bt} :=\left\{ x\in X \, : \, \dim {\rm ker}\left(E_{p_i}\rightarrow F_{q_i}\right)|_x \geq i \right\}.
\]
Instead, we  compute here the motivic Hirzebruch class of the \textit{resolution} of $W_{\bt}$; this will be  used in \S\ref{CSMW} to  obtain the motivic Hirzebruch  class of $W_{\bt}$.

Consider the following sequence of projective bundles
\begin{equation*}
X=:X_0 \xleftarrow{\pi_1} \mathbb{P}\left(E_{p_1}\right)=: X_1 \xleftarrow{\pi_2} \mathbb{P}\left(E_{p_2}/\mathbb{S}_1\right)=: X_2 
\cdots  \xleftarrow{\pi_t} \mathbb{P}\left(E_{p_t}/\mathbb{S}_{t-1}\right)=: X_t,
\end{equation*}
where $\mathbb{S}_{i}/\mathbb{S}_{i-1}$ is the tautological line bundle on $X_i$.
The variety $X_t$ here parametrizes flags of sub-bundles $V_1\subset \cdots \subset V_t$ with $\mathrm{rank}\left( V_i\right)=i$ and $V_i\subseteq E_{p_i}$.
 Since $X$ is assumed to be smooth, $X_i$ is also smooth, for each $i$.
Define
\[
\widetilde{\Omega}_{\bt} :=\left\{ \left(x, V_1\subset \cdots \subset V_t \right)\in X_t \, : \,  
%\left(\mathbb{S}_{i}/\mathbb{S}_{i-1} \rightarrow F_{q_i}\right)|_x \mbox{ is zero for all $i$}
V_i\subseteq \mathrm{ker}\left( E_{p_i} \rightarrow F_{q_i}\right)|_x \mbox{ for all $i$}
\right\}
\]
with natural maps  $\iota\colon \widetilde{\Omega}_{\bt}\hookrightarrow X_t$ and $\pi\colon X_t\rightarrow X$. 

\begin{proposition}
\label{prop:Hirzclassmain}
With assumptions as in \S\ref{sec:assumptions}, the  class $(\pi\iota)_* \, T_y\left(\widetilde{\Omega}_{\bt}\right)$ is 
\[
\frac{1}{\prod_{i<j} T_y\left(R_j - R_i\right)}\left|  \frac{1}{T_y\left(R_i\otimes \left( F_{q_i} -E_{p_i}\right)\right)}\, c_{\lambda_i +j-i}\left(F_{q_i} - E_{p_i} \right)\right|_{1\leq i,j\leq t} \cap  T_y(X).
\]
Equivalently, this is
\[
 \frac{1}{\prod_{i<j} T_y\left(R_j - R_i\right)}\prod_{i=1}^{t} \frac{1}{T_y\left(R_i\otimes \left(F_{q_i}-E_{p_i} \right)\right)} \left[W_{\bt} \right]  \cap  T_y(X).
\]
\end{proposition}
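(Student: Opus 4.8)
The plan is to prove Proposition~\ref{prop:Hirzclassmain} by induction on $t$, pushing down one projective bundle at a time through the tower $X_t\to X_{t-1}\to\cdots\to X_0=X$, and combining the basic case (Lemma~\ref{lem:basic}) with the Grassmannian-bundle formula (Lemma~\ref{lem:Gr}) at each step. The key observation is that $\widetilde{\Omega}_{\bt}$ sits inside $X_t$ as the zero locus of a regular section: on $X_t$, the map $\mathbb{S}_i/\mathbb{S}_{i-1}\to F_{q_i}$ determines a section of $\Hom(\mathbb{S}_i/\mathbb{S}_{i-1}, F_{q_i}) = (\mathbb{S}_i/\mathbb{S}_{i-1})^\vee\otimes F_{q_i}$ for each $i=1,\dots,t$, and $\widetilde{\Omega}_{\bt}$ is where all these sections vanish simultaneously. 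The assumptions in \S\ref{sec:assumptions} guarantee transversality, so Lemma~\ref{lem:zeros} (equivalently Lemma~\ref{lem:basic}, applied fibrewise) applies.

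Concretely, first I would dispose of the innermost step. Let $\rho=\pi_t\colon X_t = \mathbb{P}\left(E_{p_t}/\mathbb{S}_{t-1}\right)\to X_{t-1}$ and let $\widetilde{\Omega}'\subseteq X_{t-1}$ be the analogous locus cut out by the first $t-1$ vanishing conditions. Then $\widetilde{\Omega}_{\bt}$ is the zero locus inside $\rho^{-1}(\widetilde{\Omega}')$ of the section of $(\mathbb{S}_t/\mathbb{S}_{t-1})^\vee\otimes F_{q_t}$; applying Lemma~\ref{lem:basic} with $L=\mathbb{S}_t/\mathbb{S}_{t-1}$ and $E = F_{q_t}$, and then pushing forward along $\rho$ using Lemma~\ref{lem:Gr} to account for the relative tangent bundle of the $\mathbb{P}^{p_t-t}$-bundle, produces a factor $\frac{1}{T_y\left(R_t\otimes(F_{q_t}-E_{p_t})\right)} c_{\lambda_t}(F_{q_t}-E_{p_t})$ together with a correction term coming from the raising operator interplay of $R_t$ with the earlier $R_i$'s --- this is the source of the $\prod_{i<j}T_y(R_j-R_i)$ denominator. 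The computation here is essentially the one already carried out for fundamental classes in \cite{MR1154177, af1}, with the extra bookkeeping of the $T_y$-operators; the identity \eqref{eq:detdominant} and its vexillary generalization turn the product of single-row Chern classes into the determinant $\left|c_{\lambda_i+j-i}(F_{q_i}-E_{p_i})\right|$. I would then feed the inductive hypothesis for $\widetilde{\Omega}'$ (with one fewer row) into this step and verify the formula propagates.

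The equivalence of the two displayed expressions is the statement that $\left|c_{\lambda_i+j-i}(F_{q_i}-E_{p_i})\right|_{1\le i,j\le t} \cap [X]$ computes $[W_{\bt}]\cap[X]$, which is exactly the Kempf--Laksov/determinantal formula of \cite{MR1154177} for the main case, together with the fact that $(\pi\iota)_*[\widetilde{\Omega}_{\bt}] = \iota_*[W_{\bt}]$ since $\phi$ is a resolution (birational onto $W_{\bt}$). So the second formula follows from the first by pulling the determinant outside and reinterpreting the remaining operator product. A subtle point to be careful about is that the raising operators $R_i$ must be declared to act only on the Chern classes appearing inside the determinant $\left[W_{\bt}\right]$, not on the Chern roots of $F_{q_i}-E_{p_i}$ occurring in the $T_y$-operators themselves (cf.\ the Remark after Theorem~\ref{thm:TyOmegatilde}); keeping this distinction straight while commuting operators past each other in the push-forward is where the argument is most error-prone.

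The main obstacle I anticipate is not any single push-forward --- each is a routine application of Lemmata~\ref{lem:basic} and \ref{lem:Gr} --- but rather the combinatorial verification that the $T_y$-denominators accumulate exactly into $\prod_{i<j}T_y(R_j-R_i)$ and no extraneous factors appear. When pushing forward through $\pi_k\colon X_k = \mathbb{P}\left(E_{p_k}/\mathbb{S}_{k-1}\right)\to X_{k-1}$, the relative tangent bundle is $(\mathbb{S}_k/\mathbb{S}_{k-1})^\vee\otimes(E_{p_k}/\mathbb{S}_k)$, and after integrating over the fibre (a Segre-class / residue computation in the Chern root $\xi_k = c_1((\mathbb{S}_k/\mathbb{S}_{k-1})^\vee)$) one must check that the contributions of the already-chosen roots $\xi_1,\dots,\xi_{k-1}$ combine with $Q_y$-factors to give precisely $\prod_{i<k}T_y(\xi_k-\xi_i)$ in the denominator and nothing else. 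This is the $T_y$-analogue of the classical Jacobi--Trudi / Vandermonde manipulation underlying the determinantal formula, and it is the computational heart of the proposition; I would handle it by a generating-function argument, expanding $1/T_y$ as in the displayed formulas after the theorem statement and matching coefficients, or by citing the parallel manipulation in \cite{af1} adapted to carry the variable $y$.
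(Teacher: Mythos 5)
Your proposal is correct and is essentially the paper's own argument: the paper likewise realizes $\widetilde{\Omega}_{\bt}$ as the simultaneous zero locus of the sections of $(\mathbb{S}_i/\mathbb{S}_{i-1})^\vee\otimes F_{q_i}$ on the tower of projective bundles, combines Lemma~\ref{lem:Gr} (iterated over the $\pi_i$) with the basic case Lemma~\ref{lem:basic}, rewrites everything via the raising operators so that the relative tangent factors $(\mathbb{S}_i/\mathbb{S}_{i-1})^\vee\otimes E_{p_i}/\mathbb{S}_i$ produce exactly the denominators $\prod_{j<i}T_y(R_i-R_j)$, and converts the product into the determinant via \eqref{eq:detdominant} before pushing $\left|c_{q_i+j-i}(F_{q_i}-\mathbb{S}_i)\right|$ down to $X$ in a single step by the identity of \cite[\S 1.3]{af1}. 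The only difference is organizational: instead of your one-bundle-at-a-time fiber integration (which, as you suspect, would require a strengthened inductive statement since intermediate push-forwards still involve $\mathbb{S}_1,\dots,\mathbb{S}_{t-1}$), the paper performs all the operator algebra on $X_t$ and then invokes the single push-forward identity from \cite{af1} --- precisely the fallback you propose.
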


Note that for $\bm{k}=(1,\dots,t)$, one has $E(i)=E_{p_i}$, $F(i)=F_{q_i}$, and $S=\{(i,j) : i<j\}$, hence the  formula in Theorem \ref{thm:TyOmegatilde} specializes to the one in Proposition \ref{prop:Hirzclassmain}.

\begin{proof}
From Lemma \ref{lem:Gr}, we have
\begin{align*}
\label{CSMXi}
T_y(X_i) 
=T_y \left((\mathbb{S}_i/\mathbb{S}_{i-1})^\vee \otimes E_{p_i}/\mathbb{S}_i \right) \cap \pi_i^* \,T_y \left(X_{i-1}\right).
\end{align*}
Combining this with the basic case, one has
\[
\iota_* \, T_y\left(\widetilde{\Omega}_{\bt}\right)
=\prod_{i=1}^t \frac{T_y\left(\left(\mathbb{S}_i/\mathbb{S}_{i-1}\right)^\vee\otimes E_{p_i}/\mathbb{S}_i\right)}{T_y\left(\left(\mathbb{S}_i/\mathbb{S}_{i-1}\right)^\vee\otimes F_{q_i} \right)}\, c_{q_i}\left(F_{q_i} - \mathbb{S}_i/\mathbb{S}_{i-1} \right) \cap \pi^* T_y(X).
\]
As in \eqref{eq:firstraising}, one has that $c_1((\mathbb{S}_{i}/\mathbb{S}_{i-1})^\vee)$ acts as $R_i$ on $c_{q_i}(F_{q_i}-\mathbb{S}_i/\mathbb{S}_{i-1})$. 
The statement follows since one has
\begin{align*}
\begin{split}
T_y\left(R_i \otimes \mathbb{S}_i\right) &= T_y\left(R_i - c_1\left(\mathbb{S}_1^\vee\right)\right) \cdots T_y\left(R_i - c_1\left(\left(\mathbb{S}_{i}/\mathbb{S}_{i-1}\right)^\vee\right)\right)\\
&= T_y\left(R_i-R_1\right) \cdots T_y\left(R_i-R_{i-1}\right),
\end{split}
\end{align*}
and
\[
\prod_{i=1}^t  c_{q_i}\left(F_{q_i}-\mathbb{S}_i/\mathbb{S}_{i-1}\right) 
=
\left| c_{q_i+j-i}\left(F_{q_i}-\mathbb{S}_i\right) \right|_{1\leq i,j\leq t} \nonumber
\]
as in  \eqref{eq:detdominant}, 
and
\[
\pi_* \left| c_{q_i+j-i}\left(F_{q_i}-\mathbb{S}_i\right) \right|_{1\leq i,j\leq t}
= \left| c_{\lambda_i+j-i}\left(F_{q_i}-E_{p_i}\right) \right|_{1\leq i,j\leq t}
\]
as in \cite[\S 1.3]{af1}.
\end{proof}

\subsection{General case}
\label{sec:generalcase}
We consider here the general case of Theorem \ref{thm:TyOmegatilde} and thus complete its proof.

\begin{proof}[Proof of Theorem \ref{thm:TyOmegatilde}]
After \S\ref{sec:kpqbar}, we can restrict to the case of essential triples $\bt=(\bm{k},\bp,\bq)$. 
Indeed, the formula in the statement remains invariant after replacing a triple $\bt=(\bm{k},\bp,\bq)$ with $\bar{\bt}=\left(\bar{\bm{k}},\bar{\bp},\bar{\bq}\right)$ such that $\bar{\bt}$ is essential and  $W_{\bt}= W_{\bar{\bt}}$, as in  \S\ref{sec:kpqbar}, and thus $\widetilde\Omega_{\bt}\cong \widetilde\Omega_{\bar{\bt}}$.

The case when $\bm{k}=(1,2,\dots,t)$ is treated by Proposition \ref{prop:Hirzclassmain}. Otherwise, to define $\widetilde\Omega_{\bt}$, the triple $\bt$ is inflated to a triple $\bt'=(\bm{k}',\bp',\bq')$ such that the sequences $\bm{k}', \bm{p}'$, and $\bm{q}'$ are of length $k_t$ with $\bm{k}'=(1,2,\dots, k_t)$. 
One has $\bm{\lambda}_{\bt}=\bm{\lambda}_{\bt'}$ and $\widetilde{\Omega}_{\bt}=\widetilde{\Omega}_{\bt'}$ by definition. 
We can thus apply Proposition \ref{prop:Hirzclassmain} to compute $(\pi\iota)_* \,T_y\left(\widetilde{\Omega}_{\bt'}\right)$ with respect to the flag $E_{p'_1}\subseteq  \cdots\subseteq E_{p'_{k_t}}$ refining $E_{p_1}\subseteq  \cdots\subseteq E_{p_t}$, and the flag $F_{q'_1}\twoheadrightarrow\cdots \twoheadrightarrow F_{q'_{k_t}}$ refining $F_{q_1}\twoheadrightarrow\cdots \twoheadrightarrow F_{q_t}$. After Remark \ref{rmk:fullflags}, we can assume that such finer flags exist on $X$.
Thus from Proposition \ref{prop:Hirzclassmain}, the class $(\pi\iota)_* \,T_y\left(\widetilde{\Omega}_{\bt'}\right)$ is
\[
\frac{1}{\prod_{i<j} T_y\left(R_j - R_i\right)}\left|  \frac{1}{T_y\left(R_i\otimes \left( F_{q'_i} -E_{p'_i}\right)\right)}\, c_{\lambda_i +j-i}\left(F_{q'_i} - E_{p'_i} \right)\right|_{1\leq i,j\leq k_t} \cap  T_y(X).
\]
It remains to verify that this formula is equivalent to the formula in the statement.

As in \cite[\S 1.4]{af1} (see also \cite[\S 1]{anderson2019k}), one has
\begin{equation}
\label{eq:detdet}
\left| c(i)_{\lambda_i+j-i} \right|_{1\leq i,j\leq k_t}= \left| c_{\lambda_i+j-i}(F_{q'_i}-E_{p'_i}) \right|_{1\leq i,j\leq k_t}.
\end{equation}
Indeed, from the definition of $c(i)$ (see \S\ref{sec:cidef}), the entries of the two determinants in \eqref{eq:detdet} do not match, but the determinants do.
Furthermore, fix $i$ with $1\leq i \leq k_t$. One has $k_{a-1}< i \leq k_a$ for some $a$. We claim that  
\begin{equation}
\label{eq:claimTyi}
\prod_{j: j>k_a} \frac{1}{T_y\left(R_j-R_i\right)}\cdot\frac{1}{T_y(i)}  
\quad=\quad  \prod_{j: j>i} \frac{1}{T_y\left(R_j-R_i\right)}\cdot\frac{1}{T_y\left(R_i\otimes \left(F_{q'_i}-E_{p'_i}\right)\right)} 
\end{equation}
as operators on \eqref{eq:detdet}. Recall that $T_y(i):=T_y\left( R_i \otimes \left( F(i)-E(i)\right)\right)$, where $E(i)=E_{p_a}$ and $F(i)=F_{q_a}$, by definition (\S\ref{sec:cidef}). Assume that $q'_i=q_a$. Then one has
\[
q'_i=q'_{i+1}=\cdots =q'_{k_a}=q_a \quad \mbox{and} \quad \left(p'_i, p'_{i+1}, \dots, p'_{k_a}\right)=\left(p_a-k_a+i,\dots,p_a-1,p_a\right)
\]
by construction. It follows that
\[
\frac{1}{T_y\left( i \right)}=
\frac{1}{T_y\left(R_i\otimes \left(F_{q'_i}-E_{p'_{k_a}}\right) \right)} = 
\frac{T_y\left( R_i\otimes \left(E_{p'_{k_a}}/ E_{p'_i}\right)\right)}{T_y\left(R_i\otimes \left(F_{q'_i} -E_{p'_i}\right)\right)} .
\]
The second equality follows from the multiplicativity of $T_y$ on exact sequences.
Furthermore, one has
\[
T_y\left(E_{p'_{k_a}}/ E_{p'_i}\right)=\prod_{j:i<j\leq k_a} T_y\left(c_1\left(E_{p'_j}/ E_{p'_{j-1}}\right)\right) =\prod_{j:i<j\leq k_a} T_y\left(-R_j\right)
\]
as operators on \eqref{eq:detdet}. The second equality follows from \eqref{eq:firstraising}: 
for $j$ such that $i<j\leq k_a$, the indices of the Chern class $c\left(F_{q'_j}-E_{p'_{j}}\right)=c\left(F_{q'_j}-E_{p'_{j-1}}-E_{p'_{j}}/E_{p'_{j-1}}\right)$ in the $j$-th row of the right-hand side of \eqref{eq:detdet} are at least 
\[
\lambda_j+1-j=q'_j - p'_j +1= q'_j - p'_{j-1} =\mathrm{rank}\left(F_{q'_j}-E_{p'_{j-1}}\right), 
\]
hence from \eqref{eq:firstraising},
$c_1\left(E_{p'_j}/ E_{p'_{j-1}}\right)$ acts as the operator $-R_j$ on \eqref{eq:detdet}.
Then, 
one has
\begin{equation*}
T_y\left( R_i\otimes\left( E_{p'_{k_a}}/ E_{p'_i}\right)\right) = \prod_{j:i<j\leq k_a} T_y\left( R_i\otimes c_1\left(E_{p'_j}/ E_{p'_{j-1}} \right)\right) 
=\prod_{j:i<j\leq k_a} T_y\left( R_i-R_j\right)
\end{equation*}
as operators on \eqref{eq:detdet}. 
It follows that 
\[
\frac{1}{T_y\left( i \right)}=
\prod_{j:i<j\leq k_a} \frac{1}{T_y\left( R_j - R_i\right)}
\cdot 
\frac{1}{T_y\left(R_i\otimes \left(F_{q'_i} -E_{p'_i}\right)\right)} 
\]
as operators on \eqref{eq:detdet}, whence the claim \eqref{eq:claimTyi}.
When $q'_i<q_a$, then necessarily $p_a =p'_i$, and this case is treated similarly.
We thus conclude that the formula for $(\pi\iota)_* \, T_y\left(\widetilde{\Omega}_{\bt'}\right)$ from Proposition \ref{prop:Hirzclassmain} is equivalent to the formula in the statement.
\end{proof}

%%%%%%%%%%%%%%%%%%%%%%%%%%%%%%%
%%%%%%%%%%%%%%%%%%%%%%%%%%%%%%%
%%%%%%%%%%%%%%%%%%%%%%%%%%%%%%%

\section{CSM classes of a resolution of vexillary degeneracy loci}
\label{CSMOmegatilde}

In the case of the CSM class, the results of the previous section simplify as follows. 
For a triple $\bt$, define $\bm{\lambda}=\bm{\lambda}_{\bt}$ as in \S \ref{sec:la}, and   bundles $E(i),F(i)$,  classes $c(i)$, and operators $R_i$ and $T_y(i)$ as in \S\ref{sec:cidef}. 
Let $c(i)_j$ be the term of degree $j$ in $c(i)$, and given a variable $t$, define $c_t(i):=\sum_{j\geq 0} c(i)_j \,t^j$.  We use below the virtual rank  $\mathrm{ch}(i)_0=\mathrm{rank}(F(i)-E(i))$.

\begin{theorem}
\label{thm:csmOmegatilde}
With assumptions as in \S\ref{sec:assumptions}, the class $(\pi\iota)_* \,c_{\rm SM} \left(\widetilde{\Omega}_{\bt}\right)$ is given by
\begin{align*}
\prod_{(i, j)\in S}  \frac{1}{1+R_j-R_i} 
\left| \frac{\left( 1+R_i\right)^{-\mathrm{ch}(i)_0}}{c_{\frac{1}{1+R_i}}(i)} c(i)_{\lambda_i+j-i} \right|_{1\leq i,j\leq k_t} 
\cap  c_{\rm SM}\left( X \right),
\end{align*}
where $S:=\{(i,j) : i\leq k_a< j \mbox{ for some $a$}\}$.
Equivalently, this is
\[
\prod_{(i, j)\in S}  \frac{1}{1+R_j-R_i} \,\prod_{i=1}^t  \frac{\left( 1+R_i\right)^{-\mathrm{ch}(i)_0}}{c_{\frac{1}{1+R_i}}(i)}   \left[W_{\bt} \right]  \cap  c_{\rm SM}\left( X \right).
\]
\end{theorem}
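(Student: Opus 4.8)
The plan is to deduce Theorem \ref{thm:csmOmegatilde} from Theorem \ref{thm:TyOmegatilde} by specializing $y$ to $-1$, at which point $T_y$ becomes the CSM class, $T_{-1}(X)=c_{\rm SM}(X)$. Thus the only task is to evaluate at $y=-1$ the two families of operators occurring in Theorem \ref{thm:TyOmegatilde} --- the factors $1/T_y(R_j-R_i)$ for $(i,j)\in S$ and the factors $1/T_y(i)$ in the $i$-th row of the determinant --- and to identify them with the operators in the present statement. I would begin with the elementary observation that $Q_{-1}(\alpha)=1+\alpha$: in the closed form $Q_y(\alpha)=\frac{\alpha(1+y)}{1-e^{-\alpha(1+y)}}-\alpha y$, the first summand tends to $1$ as $y\to-1$ and the second becomes $\alpha$. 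Hence $T_{-1}(R_j-R_i)=1+R_j-R_i$, which reproduces the factor $\prod_{(i,j)\in S}\frac{1}{1+R_j-R_i}$ verbatim.

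The substantive point is to show that $1/T_{-1}(i)$ equals $(1+R_i)^{-\mathrm{ch}(i)_0}/c_{\frac{1}{1+R_i}}(i)$ as operators. Writing the Chern roots of $F(i)$ as $b_1,\dots,b_f$ and those of $E(i)$ as $a_1,\dots,a_e$, so that $\mathrm{ch}(i)_0=f-e$ and $c(i)=\prod_m(1+b_m)/\prod_n(1+a_n)$, the definition $T_y(i)=T_y\bigl(R_i\otimes(F(i)-E(i))\bigr)$ together with $Q_{-1}(\alpha)=1+\alpha$ gives $T_{-1}(i)=\prod_m(1+R_i+b_m)/\prod_n(1+R_i+a_n)$. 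On the other hand, since $c_t(i)$ is obtained from $c(i)$ by rescaling the Chern roots by $t$, substituting $t=(1+R_i)^{-1}$ and clearing the factors $1+R_i$ yields $c_{\frac{1}{1+R_i}}(i)=(1+R_i)^{-(f-e)}\prod_m(1+R_i+b_m)/\prod_n(1+R_i+a_n)=(1+R_i)^{-\mathrm{ch}(i)_0}\,T_{-1}(i)$. Comparing the two expressions gives the desired identification.

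Substituting these evaluations into Theorem \ref{thm:TyOmegatilde} and using $T_{-1}(X)=c_{\rm SM}(X)$ then produces the first displayed formula, and the second follows just as in Theorem \ref{thm:TyOmegatilde} from the $y$-independent identity $(\pi\iota)_*[\widetilde{\Omega}_{\bt}]=\iota_*[W_{\bt}]=\bigl|c(i)_{\lambda_i+j-i}\bigr|_{1\le i,j\le k_t}$ of \cite{MR1154177}. The one place demanding care --- the main obstacle, such as it is --- is the Chern-root bookkeeping for the \emph{virtual} bundle $F(i)-E(i)$: one must track the virtual-rank exponent $f-e=\mathrm{ch}(i)_0$ correctly when clearing the factors $1+R_i$, and check (exactly as in the proof of Theorem \ref{thm:TyOmegatilde}) that $R_i$ may legitimately be commuted past these manipulations when acting on the determinant.
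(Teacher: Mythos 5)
Your proposal is correct and follows essentially the same route as the paper: specialize Theorem \ref{thm:TyOmegatilde} at $y=-1$ (using $Q_{-1}(\alpha)=1+\alpha$, so $T_{-1}$ becomes the total Chern class) and then rewrite $T_{-1}(i)=c\left(R_i\otimes(F(i)-E(i))\right)$ as $(1+R_i)^{\mathrm{ch}(i)_0}\,c_{\frac{1}{1+R_i}}(i)$. Your Chern-root calculation for the virtual bundle is precisely an inline proof of the paper's Lemma \ref{cvir}, which the paper instead quotes (adapting \cite[Example 3.2.2]{MR1644323} to virtual bundles), so there is no substantive difference.
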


We emphasize that in the above formula all raising operators apply to the Chern classes coming from the expressions of type $c(i)_{\lambda_i+j-i}$ in the expansion of $\left[W_{\bt} \right]$, and do not apply to the Chern classes contributed from the terms $c_{\frac{1}{1+R_i}}(i)$.

 The first few terms in the expansion of the operator in Theorem \ref{thm:csmOmegatilde}  are given  by multiplying
\[
 \frac{1}{1+R_j - R_i} = 1 -(R_j - R_i) + (R_j - R_i)^2 + \dots
\]
for $(i,j)\in S$, and 
\begin{eqnarray*}
\frac{1}{c_{\frac{1}{1+R_i}}(i)} &=& 1 - \frac{c(i)_1}{1+R_i}  + \left(c(i)^2_1 -c(i)_2 \right) \left(\frac{1}{1+R_i} \right)^2 +\dots\\
&=& 1 -c(i)_1 + c(i)_1R_i +c(i)^2_1 -c(i)_2 + \cdots
\end{eqnarray*}
 for each $i$, and 
 \[
 (1+R_i)^{-\mathrm{ch}(i)_0}=1-\mathrm{ch}(i)_0R_i + \frac{-\mathrm{ch}(i)_0 (-\mathrm{ch}(i)_0-1)}{2}R_i^2+\cdots 
 \]
 for each $i$.

To show Theorem \ref{thm:csmOmegatilde}, we use  the following technical lemma:

\begin{lemma}
\label{cvir}
Let $L$ be a line bundle and $E$ a virtual vector bundle of virtual rank $e$. One has
\[
c(E\otimes L) = (1+c_1(L))^e c_{\frac{1}{1+c_1(L)}}(E).
\]
\end{lemma}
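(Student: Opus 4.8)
The plan is to prove Lemma~\ref{cvir} by reducing to the case of a genuine vector bundle via the splitting principle. First I would observe that both sides of the claimed identity are multiplicative in $E$: if $0 \to E' \to E \to E'' \to 0$ is an exact sequence of vector bundles, then $c(E\otimes L) = c(E'\otimes L)\,c(E''\otimes L)$, and on the right-hand side $c_{\frac{1}{1+c_1(L)}}(E) = c_{\frac{1}{1+c_1(L)}}(E')\,c_{\frac{1}{1+c_1(L)}}(E'')$ while $e = e' + e''$ so $(1+c_1(L))^e = (1+c_1(L))^{e'}(1+c_1(L))^{e''}$; multiplicativity then extends to virtual bundles written as formal differences. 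Hence it suffices to verify the formula when $E = M$ is a line bundle with $c_1(M) = a$ and $c_1(L) = \ell$.

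In that case the left-hand side is $c(M\otimes L) = 1 + a + \ell$. For the right-hand side, $c_t(M) = 1 + at$, so $c_{\frac{1}{1+\ell}}(M) = 1 + \frac{a}{1+\ell} = \frac{1+\ell+a}{1+\ell}$, and multiplying by $(1+\ell)^1$ gives $1 + \ell + a$, matching. This settles the line-bundle case, and by the splitting principle (write $E$ as a sum of line bundles after pulling back to a flag bundle, where $c$ is injective) together with the multiplicativity observed above, the identity holds for all virtual $E$.

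The one point that needs a word of care — and the main (minor) obstacle — is that $c_{\frac{1}{1+c_1(L)}}(E)$ is an element of the Chow ring with a denominator $1+c_1(L)$, i.e.\ it lives in a localization, so one should note that $1 + c_1(L)$ is invertible as a power series (its degree-zero term is $1$) and that the Chern class of a virtual bundle of virtual rank $e$ paired with such a substitution still makes sense formally; since everything is a finite expression once truncated in each degree, there is no convergence issue. I would phrase the argument so that the identity is understood as an equality in $A^*(X)$ after clearing the denominator, i.e.\ $c(E\otimes L)\cdot(1+c_1(L))^{-e}$ equals $c_{\frac{1}{1+c_1(L)}}(E)$, which is the form in which it is applied in the proof of Theorem~\ref{thm:csmOmegatilde}.
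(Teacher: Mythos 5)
Your proof is correct and matches the paper's (very brief) treatment in substance: the paper simply cites \cite[Example 3.2.2]{MR1644323} for the genuine vector bundle case --- which is itself the splitting-principle computation you carry out --- and notes that the virtual case follows "similarly," i.e.\ by exactly the formal multiplicativity for differences $E=A-B$ that you spell out. Your write-up just makes that outline explicit, including the harmless localization remark about inverting $1+c_1(L)$.
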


\noindent Here, $c_t(E):=\sum_{i\geq 0} c_i(E)\,t^i$.
The case when $E$ is a vector bundle is \cite[Example 3.2.2]{MR1644323}. For a virtual vector bundle, the argument is similar.

\begin{proof}[Proof of Theorem \ref{thm:csmOmegatilde}]
Specializing Theorem \ref{thm:TyOmegatilde} at $y=-1$, we have that the class $(\pi\iota)_* \,c_{\rm SM} \left(\widetilde{\Omega}_{\bt}\right)$ is given by
\begin{align*}
\prod_{(i, j)\in S}  \frac{1}{1+R_j-R_i} 
\left| \frac{1}{c\left(R_i\otimes \left(F(i)-E(i) \right)\right)} c(i)_{\lambda_i+j-i} \right|_{1\leq i,j\leq k_t} 
\cap  c_{\rm SM}\left( X \right),
\end{align*}
where $c\left(R_i\otimes \left(F(i)-E(i) \right)\right):= T_{-1}(i)$ is the specialization at $y=-1$ of the operator $T_y(i)$ from \S\ref{sec:cidef}.
The virtual bundle $F(i)-E(i)$ has virtual rank equal to $\mathrm{ch}(i)_0=\mathrm{rank}\left(F(i)\right)-\mathrm{rank}\left(E(i)\right)$.
Applying Lemma \ref{cvir}, we have
\[
c\left(R_i\otimes \left(F(i)-E(i) \right)\right) = \left( 1+R_i\right)^{\mathrm{ch}(i)_0} c_{\frac{1}{1+R_i}}(i),
\]
hence the statement.
\end{proof}

%%%%%%%%%%%%%%%%%%%%%%%%%%%%%%%
%%%%%%%%%%%%%%%%%%%%%%%%%%%%%%%
%%%%%%%%%%%%%%%%%%%%%%%%%%%%%%%

\section{Motivic classes of vexillary degeneracy loci}
\label{CSMW}

For a triple $\bt$, recall the inflated triple $\bt'$ from \S\ref{sec:tau'}.
The aim of this section is to prove Theorem \ref{thm:firststep}, here  restated:

\begin{theorem}
\label{thm:1Omegatildefrom1W}
For a triple $\bt=(\bm{k}, \bm{p}, \bm{q})$ and with assumptions as in \S\ref{sec:assumptions}, one has
\[
(\pi\iota)_* \, T_y\left(\widetilde{\Omega}_{\bt}\right) = \sum_{\bm{k^+}} (-y)^{|\bm{k^+}|-|\bm{k}'|} \, \iota_* \, T_y\left(W_{\btp}\right)
\]
where $\btp=\left(\bm{k^+}, \bm{p}', \bm{q}'\right)$ and the sum is over the set of weakly increasing sequences $\bm{k^+}\geq\bm{k}'=(1,\dots, k_t)$. 
\end{theorem}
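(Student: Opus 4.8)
The plan is to prove Theorem~\ref{thm:1Omegatildefrom1W} by combining the local triviality of the resolution $\phi\colon \widetilde{\Omega}_{\bt}\to W_{\bt}$ over each stratum with the fibration formula (Lemma~\ref{lemma:Fibra}), and then identifying the resulting coefficients. First I would recall from \S\ref{sec:filtr} the stratification of $W_{\bt}$ by the locally closed loci $W_{\btp}^\circ$ for weakly increasing $\bm{k^+}\geq\bm{k}'$, and the key geometric fact that over $W_{\btp}^\circ$ the map $\phi$ is Zariski-locally trivial. Its fiber over a point $x\in W_{\btp}^\circ$ is the variety of full flags $V_1\subseteq\cdots\subseteq V_{k_t}$ with $V_i\subseteq \ker(E_{p_i'}\to F_{q_i'})|_x$; since $\dim\ker(E_{p_i'}\to F_{q_i'})|_x = k_i^+$ on this stratum and (using that $\bt'$ is inflated so that $k_i' = i$) the nesting conditions on the $V_i$ are governed by the jumps of $\bm{k^+}$, this fiber is a tower of flag varieties whose type is determined by $\bm{k^+}$. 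Since $\widetilde{\Omega}_{\bt}=\widetilde{\Omega}_{\bt'}$ by definition, and $(\pi\iota)_* = \iota_*\phi_*$, applying Lemma~\ref{lemma:Fibra} to $\phi$ gives
\[
(\pi\iota)_* \, T_y\left(\widetilde{\Omega}_{\bt}\right) = \sum_{\bm{k^+}} e_{\bm{k^+}} \, \iota_*\,T_y\left(W_{\btp}\right),
\]
where the sum is over weakly increasing $\bm{k^+}\geq\bm{k}'$ and, by Lemma~\ref{lemma:Fibra}, $e_{\bm{k^+}} = \int_{F_{\bm{k^+}}} T_y(F_{\bm{k^+}}) - \sum_{\bm{k^{\mdoubleplus}}>\bm{k^+}} e_{\bm{k^{\mdoubleplus}}}$ with $F_{\bm{k^+}}$ the fiber over $W_{\btp}^\circ$.

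Next I would compute the coefficients $e_{\bm{k^+}}$ explicitly and show they equal $(-y)^{|\bm{k^+}|-|\bm{k}'|}$. The fiber $F_{\bm{k^+}}$ over the open stratum $W_{\btp}^\circ$ is the variety of full flags $V_1\subseteq\cdots\subseteq V_{k_t}$ inside a chain of subspaces of dimensions $k_1^+\leq\cdots\leq k_{k_t}^+$ subject to $\dim V_i = i$ and $V_i\subseteq (\text{the }k_i^+\text{-dimensional space})$; concretely this is an iterated tower of partial flag bundles, and its $\chi_y$-genus factors accordingly. The crucial computational input is Example~\ref{ex:TyPnAn}: $\chi_y(\mathbb{CP}^n) = 1 + (-y) + \cdots + (-y)^n$, and $\chi_y$ is multiplicative on fibrations. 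So $\int_{F_{\bm{k^+}}}T_y(F_{\bm{k^+}})$ is a product of such $q$-integers in $-y$. I would then argue that the recursion $e_{\bm{k^+}} = \int_{F_{\bm{k^+}}}T_y(F_{\bm{k^+}}) - \sum_{\bm{k^{\mdoubleplus}}>\bm{k^+}} e_{\bm{k^{\mdoubleplus}}}$, together with the claim $e_{\bm{k^{\mdoubleplus}}} = (-y)^{|\bm{k^{\mdoubleplus}}|-|\bm{k}'|}$ for all $\bm{k^{\mdoubleplus}}>\bm{k^+}$, forces $e_{\bm{k^+}} = (-y)^{|\bm{k^+}|-|\bm{k}'|}$; this is an inclusion--exclusion identity that amounts to the telescoping fact that the excess of $\chi_y$ of a flag tower over the "expected" monomial is exactly accounted for by the strictly larger sequences, i.e.\ $\sum_{\bm{k^{\mdoubleplus}}\geq\bm{k^+}}(-y)^{|\bm{k^{\mdoubleplus}}|-|\bm{k}'|}$ equals the product of $q$-integers giving $\int_{F_{\bm{k^+}}}T_y(F_{\bm{k^+}})$. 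One verifies this by induction on codimension: when $\bm{k^+}=\bm{k}'$ the fiber is a single point (the identity flag), so $e_{\bm{k}'}=1=(-y)^0$, anchoring the induction; the inductive step is the combinatorial identity just stated, which can be checked factor-by-factor by comparing, for each jump position, the $q$-integer $1+(-y)+\cdots+(-y)^m$ against the sum of $(-y)^j$ over the allowed refinements $j=0,\dots,m$ of that jump.

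The main obstacle I expect is not the algebra but making precise the two geometric claims underpinning Lemma~\ref{lemma:Fibra}'s hypotheses: that $\phi$ is genuinely Zariski-locally trivial over each $W_{\btp}^\circ$ with the stated fiber, and that the fibers $F_{\bm{k^+}}$ are smooth iterated flag bundles whose $\chi_y$ is the claimed product. The local triviality requires knowing that over $W_{\btp}^\circ$ the kernel bundles $\ker(E_{p_i'}\to F_{q_i'})$ have locally constant rank $k_i^+$ and hence form honest sub-bundles, so that the relative flag variety is a genuine fiber bundle --- this is exactly where the assumptions of \S\ref{sec:assumptions} (smoothness and expected dimension of every $W_{\btp}^\circ$) and the analysis of \S\ref{sec:filtr} enter, and I would cite those rather than reprove them. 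A secondary subtlety is bookkeeping: one must ensure that the indexing "weakly increasing $\bm{k^+}\geq\bm{k}'=(1,\dots,k_t)$" in Lemma~\ref{lemma:Fibra} matches the indexing of strata of $W_{\bt}=W_{\bt'}$, and that finitely many strata occur (guaranteed by the codimension bound $\codim_X W_{\btp}=|\bm{\lambda}_{\btp}|$ growing with $\bm{k^+}$), so the recursion for $e_{\bm{k^+}}$ terminates. Once these points are in place, the proof is a clean application of Lemma~\ref{lemma:Fibra} plus the $q$-integer identity, and I would close by noting the displayed example for $\bt(1),\bt(2),\bt(3),\bt(22)$ in the introduction serves as a consistency check.
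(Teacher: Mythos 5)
Your overall route is the same as the paper's: stratify $W_{\bt}$ by the locally closed loci $W_{\btp}^\circ$, use local triviality of $\phi$ there, apply Lemma~\ref{lemma:Fibra}, and identify the coefficients with $(-y)^{|\bm{k^+}|-|\bm{k}'|}$ by a recursion anchored at $\bm{k^+}=\bm{k}'$; your description of the fiber as an iterated tower of projective bundles, with $\chi_y$ a product of $q$-integers in $-y$, is correct and is a harmless variant of the paper's identification of the fiber with a Schubert variety $S_{\bm{\nu}^+}$. But two steps, as written, do not work. First, you quote the recursion of Lemma~\ref{lemma:Fibra} backwards: the correction term there is $\sum_j e_j$ over the strata $X_j$ with $X_k\subset\overline{X}_j$, i.e.\ over the \emph{less} degenerate sequences $\bm{f}<\bm{k^+}$, not over $\bm{k^{\mdoubleplus}}>\bm{k^+}$. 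With your version the anchor $e_{\bm{k}'}=1$ does not even follow (the fiber over the open stratum is a point, but your formula would subtract from it the coefficients of all deeper strata), and the induction runs in the wrong direction.

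Second, and more seriously, the combinatorial identity you propose to close the induction is false. With the correct direction one needs, for each occurring $\bm{k^+}$, that $\chi_y$ of the fiber equal the sum of the contributions attached to the sequences between $\bm{k}'$ and $\bm{k^+}$, and you plan to check this ``factor by factor'', as if that index set were the product of the ranges of the individual jumps. It is not: take $\bm{k}'=(1,2)$ and a stratum where both kernels are $3$-dimensional (this occurs, e.g., in the Grassmannian/Brill--Noether situation $p'_1=p'_2$). The fiber is $\Fl(1,2;\mathbb{C}^3)$, with $\chi_y=(1+(-y)+(-y)^2)(1+(-y))=1+2(-y)+2(-y)^2+(-y)^3$, whereas the weakly increasing sequences $(1,2)\leq\bm{f}\leq(3,3)$ are $(1,2),(1,3),(2,2),(2,3),(3,3)$ and contribute $1+2(-y)+(-y)^2+(-y)^3$: the interval of weakly increasing sequences is not a box, so the product-of-$q$-integers verification breaks. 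What actually makes the count come out right is precisely the bookkeeping you set aside as ``secondary'': distinct sequences $\bm{k^+}$ can cut out one and the same closed locus $W_{\btp}$ once some rank conditions become redundant, so a single honest stratum legitimately receives several terms of the theorem's sum, and the affine cells of the fiber have to be matched against that multiset of sequences rather than against a product of independent ranges. This is the step the paper carries out by identifying the fibers with the Schubert varieties $S_{\bm{\nu}^+}$ and the coefficients with $\chi_y$-genera of their affine Schubert cells (each equal to $(-y)^{|\bm{k^+}|-|\bm{k}'|}$ by Example~\ref{ex:TyPnAn}), so that the recursion closes by additivity of $\chi_y$ over the cell decomposition; without that matching your induction does not close.
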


\subsection{The stratification}
\label{sec:filtr}
The degeneracy locus $W_{\bt}$ is stratified by the loci $W_{\btp}^\circ \subseteq W_{\bt}$ defined in \eqref{eq:locclosedstrata} with $\btp=\left(\bm{k^+}, \bm{p}', \bm{q}'\right)$,
for weakly increasing sequences $\bm{k^+}\geq\bm{k}'$. After \S\ref{sec:lafeasible}, we set $W_{\btp}$  empty, unless $\bm{\lambda}_{\btp}$ is weakly decreasing.
The codimension of $W_{\btp}$ in $W_{\bt}$ is $|\bm{\lambda}_{\btp}|-|\bm{\lambda}_{\bt}|$. There is no such stratum of codimension one in $W_{\bt}$.

\smallskip

Recall the variety $\widetilde{\Omega}_{\bt}$ from \S\ref{sec:motref}, with the following diagram
\[
\begin{tikzcd}
\widetilde{\Omega}_{\bt}\arrow{d}[swap]{\phi} \arrow[hookrightarrow]{r}{\iota} \arrow{d} & X_{k_t} \arrow{d}{\pi}\\
W_{\bt} \arrow[hookrightarrow]{r} & X.
\end{tikzcd}
\]
The fiber of $\phi$ over a  point $x$ in $W_{\bt}$ is
\[
\left\{ (V_{1}\!\subset \!\cdots \!\subset\! V_{k_t} )\in \pi^{-1}(x) \, :\, V_{i}\subseteq \ker\left(E_{p'_i} \rightarrow F_{q'_i}\right)\Big|_x \, \mbox{ for all $i$} \right\}.
\]
The map $\phi$ is locally trivial on each locally closed stratum $W_{\bt^+}^\circ\subseteq W_{\bt}$.

\begin{proof}[Proof of Theorem \ref{thm:1Omegatildefrom1W}]
The collection of loci $W_{\btp}^\circ$ from \eqref{eq:locclosedstrata}, with $\btp=\left(\bm{k^+}, \bm{p}', \bm{q}'\right)$
for $\bm{k^+}\geq\bm{k}'$,
 gives a stratification of the locus $W_{\bt}$ such that $\phi$ is locally trivial on each  $W_{\bt^+}^\circ$. 
Given $\bm{k^+}\geq\bm{k}'$, the generic fiber of $\phi$ on  $W_{\btp}$ is isomorphic to the Schubert variety associated to a partition $\bm{\nu}^+$ assigned to $\bm{k}^+$:
\[
S_{\bm{\nu}^+}:= \left\{ (V_{1}\!\subseteq \!\cdots \!\subseteq\! V_{k_t} )\in \textrm{Fl}\left(1,\dots,k_t;\mathbb{C}^{p_t}\right) \, :\, V_{i}\subseteq K_{k^+_i} \, \mbox{ for all $i$} \right\},
\]
where $K_0\subset\! \cdots\! \subset K_{p_t}$ is a fixed complete flag of vector spaces inside $\mathbb{C}^{p_t}$ with \mbox{$\dim(K_i)=i$.}
The vector spaces $K_{k^+_i}$ are meant to be identified with \mbox{$\ker(E_{p'_i} \rightarrow F_{q'_i})\big|_x$} for a generic point $x$ in $W_{\bt^+}$.
If $\bm{k^+}$ is strictly increasing, the partition  $\bm{\nu}^+$ is defined as
\[
\bm{\nu}^+ :=(k^+_{k_t}-k_t, \dots, k^+_1-1).
\]
In general, for $\bm{k^+}$  not necessarily strictly increasing, a coordinate computation shows that the Schubert cell $S_{\bm{\nu}^+}^\circ$ in $S_{\bm{\nu}^+}$ is isomorphic to the affine space $\mathbb{A}^{|\bm{k^+}|-|\bm{k}'|}$.

As in Lemma  \ref{lemma:Fibra}, one has 
\[
(\pi\iota)_* \, T_y\left(\widetilde{\Omega}_{\bt}\right) = \sum_{\bm{k^+}} d_{(\bm{k^+})} \,\iota_*\, T_y\left(W_{\btp}\right)
\]
with $d_{(\bm{k^+})}=\chi_y\left(S_{\bm{\nu}^+}\right)-\sum_{\bm{f}} d_{\bm{f}}$, where the sum is over the set of weakly increasing sequences $\bm{f}$ such that $W_{\bm{\phi}}\supset W_{\btp}$ for $\bm{\phi}=(\bm{f}, \bm{p}', \bm{q}')$, that is, $\bm{f}<\bm{k^+}$ and $\bm{\lambda}_{\bm{\phi}}$ is weakly decreasing.
Clearly $d_{(\bm{k}')}=1$, and by recursion one finds that $d_{(\bm{k^+})}$ is equal to the Hirzebruch $\chi_y$-genus of the {Schubert cell} $\left(S_{\bm{\nu}^+}\right)^\circ$.
Since $\left(S_{\bm{\nu}^+}\right)^\circ \cong \mathbb{A}^{|\bm{k^+}|-|\bm{k}'|}$, one has $d_{(\bm{k^+})}=(-y)^{|\bm{k^+}|-|\bm{k}'|}$ for each $\bm{k^+}$ (as in Example \ref{ex:TyPnAn}), hence the statement.
\end{proof}

\begin{example}
Consider the triple $\bt=(\bm{k},\bp,\bq)$ where $\bm{p}=(2,3)$, $\bm{q}=(3,2)$, and $\bm{k}=(1,2)$. One has
\[
(\pi\iota)_* \, T_y\left( \widetilde{\Omega}_{\bt}\right)  = 
\iota_*\,T_y\left({W_{(1,2)}}\right) 
-y \,\iota_*\,T_y\left({W_{(2,2)}}\right) 
-y \, \iota_*\,T_y\left({W_{(1,3)}}\right) 
+ y^2 \, \iota_*\,T_y\left({W_{(2,3)}}\right).
\]
Here for simplicity, we use the notation $W_{\bm{k^+}}:=W_{\btp}$.
For each stratum, one has the following configuration over its general point:
\begin{align*}
W_{(1,2)} :& \qquad \dim \textrm{Ker} (E_2\rightarrow F_3) =1 \quad \mbox{and} \quad \dim \textrm{Ker} (E_3\rightarrow F_2) =2\\
W_{(2,2)} :& \qquad \dim \textrm{Ker} (E_2\rightarrow F_3) =2 \quad \mbox{and} \quad \dim \textrm{Ker} (E_3\rightarrow F_2) =2\\
W_{(1,3)} :& \qquad\dim \textrm{Ker} (E_2\rightarrow F_3) =1 \quad \mbox{and} \quad \dim \textrm{Ker} (E_3\rightarrow F_2) =3\\
W_{(2,3)} :& \qquad\dim \textrm{Ker} (E_2\rightarrow F_3) =2 \quad \mbox{and} \quad \dim \textrm{Ker} (E_3\rightarrow F_2) =3.
\end{align*}
\end{example}

\subsection{Example: Schubert varieties in Grassmannians}
\label{sec:SchubinGr}
The Schubert varieties of a Grassmannian are \textit{Grassmannian degeneracy loci} for maps from the tautological vector bundle to a flag of constant bundles.
Our results apply to give the motivic Hirzebruch class of the Schubert varieties in terms of the motivic Hirzebruch class of the Grassmannian.
For instance, for $X=G_2\left(\mathbb{C}^5\right)$, consider the Schubert variety $S_{\bm{\lambda}}$ associated to the partition $\bm{\lambda}=(2,1)$. 
This is the degeneracy locus $W_{\bt}$ corresponding to the triple $\bt=\left(\bm{k}, \bm{p}, \bm{q} \right)$ with $\bm{k}=(1,2)$, $\bm{p}=(2,2)$, and $\bm{q}=(3,1)$.
The stratification here consists of two strata: the stratum with $\bm{k^+}=(1,2)$ and the one with $\bm{k^+}=(2,2)$.
For $y=-1$,  Theorems \ref{thm:1Omegatildefrom1W} and \ref{thm:TyOmegatilde} give
\begin{eqnarray*}
\iota_*  \,c_{{\rm SM}} \left(S_{\bm{\lambda}}\right) &= &
\left(
\begin{ytableau}
\hfil & \hfil \\
\hfil 
\end{ytableau}
-2\,
\begin{ytableau}
\hfil & \hfil & \hfil\\
\hfil 
\end{ytableau}
-2 \,
\begin{ytableau}
\hfil & \hfil \\
\hfil & \hfil 
\end{ytableau}
+5 \,
\begin{ytableau}
\hfil & \hfil & \hfil \\
\hfil & \hfil 
\end{ytableau}
-4\,
\begin{ytableau}
\hfil & \hfil & \hfil \\
\hfil & \hfil & \hfil
\end{ytableau}
\right) \cap\, c_{\mathrm{SM}}(X)\\
&=& 
\begin{ytableau}
\hfil & \hfil \\
\hfil 
\end{ytableau}
+3\,
\begin{ytableau}
\hfil & \hfil & \hfil\\
\hfil 
\end{ytableau}
+3 \,
\begin{ytableau}
\hfil & \hfil \\
\hfil & \hfil 
\end{ytableau}
+8 \,
\begin{ytableau}
\hfil & \hfil & \hfil \\
\hfil & \hfil 
\end{ytableau}
+5\,
\begin{ytableau}
\hfil & \hfil & \hfil \\
\hfil & \hfil & \hfil
\end{ytableau}.
\end{eqnarray*}
The last equality uses the formula for $c_{\mathrm{SM}}(X)$ from \cite{aluffi2009chern}, and the resulting formula checks with the CSM class computation in \cite{aluffi2009chern}.

%%%%%%%%%%%%%%%%%%%%%%%%%%%%%%%
%%%%%%%%%%%%%%%%%%%%%%%%%%%%%%%
%%%%%%%%%%%%%%%%%%%%%%%%%%%%%%%

\section{The locus \texorpdfstring{$\Omega_{\bm{\lambda}}$}{Omegapq} and its motivic Hirzebruch class}
\label{sec:Omega}
We discuss here a setting which is particularly relevant in the study of pointed Brill-Noether  varieties (\S\ref{sec:pBNvar}).
Consider the following maps of vector bundles over a variety~$X$:
\[
E_p \xrightarrow{\varphi} F_{q_1} \twoheadrightarrow F_{q_2} \dots \twoheadrightarrow F_{q_t}.
\]
The {Grassmann degeneracy locus}  is
\[
W_{\bm{\lambda}}:=\left\{x\in X \, : \, \dim \textrm{ker}(E_p \rightarrow F_{q_i})|_x \geq i \, \mbox{ for all $i$} \right\},
\]
with partition $\bm{\lambda}=(\lambda_1, \dots, \lambda_t)$ from \S\ref{sec:la} equal to
$\lambda_i := q_i-p+i$.
The locus $W_{\bm{\lambda}}$ is the  degeneracy locus $W_{\bt}$ corresponding to the triple $\bt=\left( \bm{k}, \bm{p}, \bm{q}\right)$ with 
$\bm{k}=(1,\dots,t)$, $\bm{p}=(p,\dots,p)$, and $\bm{q}=(q_1,\dots, q_t)$.
Its motivic Hirzebruch class is computed by Theorems \ref{thm:1Omegatildefrom1W} and \ref{thm:TyOmegatilde}.
Now consider the Grassmannian bundle $\bGr\left(t,E_p\right)$ on $X$ with tautological rank $t$ sub-bundle $\mathbb{S}$,
 and define its subvariety $\Omega_{\bm{\lambda}}$   by the conditions
\[
\dim \ker (\mathbb{S} \rightarrow F_{q_i})\geq i \quad \mbox{for  $1\leq i\leq t$}. 
\]
One has 
\[
\begin{tikzcd}
\Omega_{\bm{\lambda}} \arrow{d}[swap]{\phi} \arrow[hookrightarrow]{r}{\iota} \arrow{d} & \bGr(t,E_p) \arrow{d}{\pi}\\
W_{\bm{\lambda}} \arrow[hookrightarrow]{r} & X,
\end{tikzcd}
\]
and the fiber of $\phi$ over a point $x$ is 
\begin{equation}
\label{eq:fiberomega}
\left\{V\in \mathrm{Gr}\left(t,E_p|_x\right) \, : \, \dim \left( V\cap \mathrm{ker}\left( E_p\rightarrow F_{q_i}\right)|_x \right)\geq i \,\mbox{ for all $i$} \right\}.
\end{equation}

The aim of this section is to describe the (push-forward of the) motivic Hirzebruch class of $\Omega_{\bm{\lambda}}$ in terms of the motivic Hirzebruch class of loci of type $W_{\bm{\lambda}}$.
This is achieved in Proposition \ref{prop:1Omegafrom1W}. 
The (push-forward of the) motivic Hirzebruch class of $\Omega_{\bm{\lambda}}$ then follows after applying Theorems \ref{thm:1Omegatildefrom1W} and \ref{thm:TyOmegatilde} to compute the motivic Hirzebruch class of the loci~$W_{\bm{\lambda}}$.

\subsection{The stratification}
\label{ss:fiber}
We start by describing a stratification of $W_{\bm{\lambda}}$ induced by the projection $\phi\colon\Omega_{\bm{\lambda}} \rightarrow W_{\bm{\lambda}}$.
Consider the loci
\[
W_{\bm{\lambda}}^{\bm{\kappa}}:=\left\{x\in X \, : \, \dim \textrm{ker}\left(E_p \rightarrow F_{q_i}\right)|_x \geq i+\kappa_i  \, \mbox{ for all $i$}\right\} \subseteq W_{\bm{\lambda}},
\]
with $\bm{\kappa}=(0\leq \kappa_1\leq\dots \leq \kappa_t)$ such that $\bm{\lambda} + \bm{\kappa}$ is a partition.
The map $\phi$ is locally trivial precisely on the locally closed strata 
\[
\left(W_{\bm{\lambda}}^{\bm{\kappa}}\right)^\circ :=\left\{x\in X \, : \, \dim \textrm{ker}\left(E_p \rightarrow F_{q_i}\right)|_x = i+\kappa_i  \, \mbox{ for all $i$}\right\} \subseteq W_{\bm{\lambda}}^{\bm{\kappa}}.
\]
The fiber of $\phi$ over a general point $x$ in $W_{\bm{\lambda}}^{\bm{\kappa}}$ from \eqref{eq:fiberomega}
coincides with the Schubert variety $S_{\bm{\kappa}^c}$ in $\textrm{Gr}(t,E_p|_x)$ associated with the partition $\bm{\kappa}^c$ complementary to $\bm{\kappa}$ inside the $t\times(p-t)$ rectangle. For instance, when $t=3$, $p=7$, and $\bm{\kappa}=(1,1,2)$, one has $\bm{\kappa}^c = (3,3,2)$.

In the following, it is convenient to identify a weakly increasing sequence $\bm{\kappa}$ with the shape consisting of $\kappa_i$ boxes in the $i$th row, and note that the componentwise order is compatible with containment of shapes.

\begin{remark}
\label{rmk:lakavsla}
Fix $\bm{\kappa}=(0\leq \kappa_1\leq\dots \leq \kappa_t)$
such that $\bm{\lambda} + \bm{\kappa}$ is a partition.
We emphasize that the partition in \S\ref{sec:la} assigned to the degeneracy locus $W_{\bm{\lambda}}^{\bm{\kappa}}$ is not $\bm{\lambda} + \bm{\kappa}$,  but rather the partition with $1+\kappa_1$ parts equal to $\lambda_1+\kappa_1$, and $1+\kappa_2-\kappa_1$ parts equal to $\lambda_2+\kappa_2$,~etc. 
\end{remark}

\subsection{The sequence \texorpdfstring{$\bm{\kappa}^{\rm red}$}{kred}}
Fix a partition $\bm{\lambda}=(\lambda_1, \dots, \lambda_t)$ and $\bm{\kappa}=(0\leq \kappa_1\leq\dots\leq \kappa_t)$ such that $\bm{\lambda}+\bm{\kappa}$ is weakly decreasing.

\begin{definition}
\label{kred}
Given $\bm{\lambda}$ and $\bm{\kappa}$ as above, the sequence 
\[
\bm{\kappa}^{\rm red}=(0\leq \kappa^{\rm red}_1\leq \cdots\leq \kappa^{\rm red}_t)
\]
is defined as the minimal  sequence in componentwise order with 
$\kappa^{\rm red}_t=\kappa_t$ and 
\[
\kappa^{\rm red}_i=\kappa_i \qquad \mbox{when $\kappa_{i+1}+\lambda_{i+1}< \kappa_i + \lambda_i$, for $i<t$.}
\]
\end{definition}

For instance, for $\bm{\lambda}=(4,4,1,1)$ and $\bm{\kappa}=(1,1,3,3)$ one has $\bm{\kappa}^{\rm red}=(0,1,1,3)$. 
See Example \ref{ex:shaded} for a graphical representation of some sequences $\bm{\kappa}$ and corresponding~$\bm{\kappa}^{\rm red}$.

\begin{lemma}
\label{lemma:kred}
Given $\bm{\lambda}$ and $\bm{\kappa}$ as above,
$\bm{\kappa}^{\rm red}$ is the smallest shape inside  $\bm{\kappa}$ which is not contained  in any of the shapes $\bm{\epsilon} <\bm{\kappa}$ such that $\bm{\lambda}+\bm{\epsilon}$ is weakly decreasing.
\end{lemma}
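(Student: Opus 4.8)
The plan is to verify the two halves of the characterization separately: first, that $\bm{\kappa}^{\rm red}$ is itself \emph{not} contained in any shape $\bm{\epsilon}<\bm{\kappa}$ with $\bm{\lambda}+\bm{\epsilon}$ weakly decreasing; and second, that every shape strictly smaller than $\bm{\kappa}^{\rm red}$ \emph{is} contained in some such $\bm{\epsilon}$. Throughout, I would work with the componentwise order on weakly increasing sequences and exploit the defining property of $\bm{\kappa}^{\rm red}$ from Definition \ref{kred}: it agrees with $\bm{\kappa}$ in position $t$ and in every position $i<t$ where $\kappa_{i+1}+\lambda_{i+1}<\kappa_i+\lambda_i$ (the ``strict descent'' positions of $\bm{\lambda}+\bm{\kappa}$), and it takes the minimal possible value consistent with being weakly increasing at the remaining positions. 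Concretely, one checks $\kappa^{\rm red}_i=\max\{\kappa_j : j\geq i,\ j\text{ is a strict-descent position or }j=t\}$, which makes $\bm{\kappa}^{\rm red}$ manifestly weakly increasing and coordinatewise $\leq\bm{\kappa}$.

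For the first half, suppose for contradiction that $\bm{\kappa}^{\rm red}\leq\bm{\epsilon}$ for some $\bm{\epsilon}<\bm{\kappa}$ with $\bm{\lambda}+\bm{\epsilon}$ weakly decreasing. Pick an index $i$ with $\epsilon_i<\kappa_i$. Since $\bm{\kappa}^{\rm red}_i\leq\epsilon_i<\kappa_i$, by the formula above $i$ is not a strict-descent position and $i\neq t$, so $\kappa_{i+1}+\lambda_{i+1}\geq\kappa_i+\lambda_i$; combined with the feasibility hypothesis that $\bm{\lambda}+\bm{\kappa}$ is weakly decreasing, this forces $\kappa_{i+1}+\lambda_{i+1}=\kappa_i+\lambda_i$, i.e.\ $\kappa_{i+1}-\kappa_i=\lambda_i-\lambda_{i+1}$. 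Now I would propagate forward: let $i'$ be the largest index $\geq i$ such that $\kappa_j+\lambda_j$ is constant on $[i,i']$; then $i'$ is either a strict-descent position or equals $t$, so $\kappa^{\rm red}_{i'}=\kappa_{i'}$, hence $\epsilon_{i'}\geq\kappa^{\rm red}_{i'}=\kappa_{i'}$, and since $\bm{\epsilon}\leq\bm{\kappa}$ we get $\epsilon_{i'}=\kappa_{i'}$. But $\bm{\lambda}+\bm{\epsilon}$ weakly decreasing on $[i,i']$ together with $\epsilon_i<\kappa_i=\epsilon_{i'}-(\kappa_{i'}-\kappa_i)$ and $\lambda_i-\lambda_{i'}=\kappa_{i'}-\kappa_i$ yields $\lambda_i+\epsilon_i<\lambda_{i'}+\epsilon_{i'}$, contradicting that $\bm{\lambda}+\bm{\epsilon}$ is weakly decreasing. (The arithmetic here is the one routine computation; the content is the forced chain of equalities.)

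For the second half, let $\bm{\delta}<\bm{\kappa}^{\rm red}$ be any strictly smaller shape; I must produce $\bm{\epsilon}$ with $\bm{\delta}\leq\bm{\epsilon}<\bm{\kappa}$ and $\bm{\lambda}+\bm{\epsilon}$ weakly decreasing. The natural candidate is to take $\bm{\epsilon}$ to be the ``staircase fill'' of $\bm{\kappa}^{\rm red}$ after lowering it at a single position: pick an index $m$ with $\delta_m<\kappa^{\rm red}_m$, and define $\epsilon_i:=\min(\kappa^{\rm red}_i,\ \kappa^{\rm red}_m-1+\text{(the forced increase }\lambda_{i}^{-}\text{ down from }i\text{ to }m))$ for $i\leq m$ and $\epsilon_i:=\kappa^{\rm red}_i$ for $i>m$ — more precisely, lower $\kappa^{\rm red}$ by one in position $m$ and then in each earlier position $i<m$ only as far as forced to keep $\bm{\lambda}+\bm{\epsilon}$ weakly decreasing, which by minimality of $\bm{\kappa}^{\rm red}$ at non-strict-descent positions is exactly where $\bm{\kappa}^{\rm red}$ was already minimal, so no earlier coordinate drops below $\delta_i$. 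One then verifies directly that $\bm{\epsilon}$ is weakly increasing, that $\bm{\lambda}+\bm{\epsilon}$ is weakly decreasing (by construction, using that $\bm{\lambda}+\bm{\kappa}^{\rm red}$ is already weakly decreasing since $\bm{\kappa}^{\rm red}\leq\bm{\kappa}$ agrees with $\bm{\kappa}$ at strict descents), that $\bm{\epsilon}<\bm{\kappa}$ (strict at position $m$, where $\kappa^{\rm red}_m\leq\kappa_m$ but we may also need $m$ chosen so $\kappa^{\rm red}_m<\kappa_m$, or else lower at a different witnessing index — handled by the same case analysis as in part one), and that $\bm{\delta}\leq\bm{\epsilon}$. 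The main obstacle is bookkeeping in this construction: ensuring the one-step lowering at $m$ does not cascade below $\bm{\delta}$, which is precisely where the minimality clause in Definition \ref{kred} is used, and handling the boundary case where the witnessing index for $\bm{\delta}<\bm{\kappa}^{\rm red}$ forces lowering at a position where $\kappa^{\rm red}$ already equals $\kappa$ (there one lowers instead at the nearest preceding non-strict-descent position, which exists by the formula for $\bm{\kappa}^{\rm red}$).
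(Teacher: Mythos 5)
Your first half is essentially correct and is the same argument as the paper's (the paper runs a downward induction from $t$, you jump to the right end of a constant block of $\bm{\lambda}+\bm{\kappa}$; the mechanism — at positions that are $t$ or strict descents one has $\kappa^{\rm red}_i=\kappa_i$, which sandwiches $\epsilon_i$, and elsewhere the forced equality $\lambda_{i+1}+\kappa_{i+1}=\lambda_i+\kappa_i$ propagates $\epsilon_i=\kappa_i$ — is identical). One slip: your closed formula $\kappa^{\rm red}_i=\max\{\kappa_j : j\geq i,\ j \mbox{ fixed}\}$ is wrong (since $t$ is always fixed it would give the constant sequence $(\kappa_t,\dots,\kappa_t)$); it should be the maximum over fixed positions $j\leq i$, with empty maximum $0$. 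This does not damage part one, which only uses $\kappa^{\rm red}_i=\kappa_i$ at fixed positions, i.e.\ Definition \ref{kred} itself.

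The second half has a genuine gap: your construction lowers $\bm{\kappa}^{\rm red}$ at a row $m$, keeps $\kappa^{\rm red}_i$ for $i>m$, and cascades only to earlier rows, and it rests on the assertion that $\bm{\lambda}+\bm{\kappa}^{\rm red}$ is already weakly decreasing. That assertion is false: in the paper's own example $\bm{\lambda}=(4,4,1,1)$, $\bm{\kappa}=(1,1,3,3)$ one has $\bm{\kappa}^{\rm red}=(0,1,1,3)$ and $\bm{\lambda}+\bm{\kappa}^{\rm red}=(4,5,2,4)$. Consequently the proposed $\bm{\epsilon}$ need not be admissible at all: take $\bm{\delta}=(0,0,1,3)<\bm{\kappa}^{\rm red}$ and $m=2$; any sequence obtained by lowering at $m$ and adjusting only rows $\leq m$ has $\epsilon_3\leq 1$ and $\epsilon_4=\kappa^{\rm red}_4=3$, so $\lambda_3+\epsilon_3\leq 2<4=\lambda_4+\epsilon_4$, and no leftward cascade can repair this. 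The admissible shape containing $\bm{\delta}$ is $(0,0,3,3)$: one must restore the entries of $\bm{\kappa}$ (not of $\bm{\kappa}^{\rm red}$) in the rows after the removed corner. This is precisely the paper's construction: reduce to the case of a single corner removal of $\bm{\kappa}^{\rm red}$, observe that a removable corner in row $i<t$ forces $\kappa^{\rm red}_i=\kappa_i$ and the strict descent $\lambda_{i+1}+\kappa_{i+1}<\lambda_i+\kappa_i$, and then take $\bm{\epsilon}=(\min\{\kappa_1,\kappa_i-1\},\dots,\min\{\kappa_{i-1},\kappa_i-1\},\kappa_i-1,\kappa_{i+1},\dots,\kappa_t)$ (and the analogous truncation of $\bm{\kappa}$ when the corner is in row $t$), where the strict descent is exactly what makes $\bm{\lambda}+\bm{\epsilon}$ weakly decreasing across row $i$. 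Without replacing your ``lower $\bm{\kappa}^{\rm red}$'' candidate by this ``lower $\bm{\kappa}$'' candidate, the second inclusion of the lemma is not proved.
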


\begin{proof}
We first prove that 
if $\bm{\epsilon}$ contains $\bm{\kappa}^{\rm red}$ and $\bm{\lambda}+\bm{\epsilon}$ is weakly decreasing, then $\bm{\epsilon}=\bm{\kappa}$. One has necessarily $\epsilon_t=\kappa_t$. Next, fix $i<t$ such that $\kappa^{\rm red}_i<\kappa_i$, and assume that we have already settled that $\epsilon_j=\kappa_j$ for $j>i$. By definition, since $\kappa^{\rm red}_i<\kappa_i$, one has $\kappa_{i+1}+ \lambda_{i+1} = \kappa_i + \lambda_i$. Since we want that $\lambda_i + \epsilon_i \geq \lambda_{i+1} + \epsilon_{i+1}=\lambda_{i+1} +\kappa_{i+1}$, then necessarily $\epsilon_i=\kappa_i$.

Finally, we show that 
any smaller shape $\bm{\kappa}'<\bm{\kappa}^{\rm red}$ is contained inside some $\bm{\epsilon}<\bm{\kappa}$  such that $\bm{\lambda}+\bm{\epsilon}$ is weakly decreasing. If $\kappa'_t<\kappa_t=\kappa^{\rm red}_t$, then $\bm{\kappa}'$ is contained inside 
\[
\bm{\epsilon}=(\min \{\kappa_1, \kappa_t-1 \},\dots, \min \{\kappa_{t-1}, \kappa_t-1 \},\kappa_t-1).
\]
Next, fix $i<t$ such that $\kappa^{\rm red}_i> \kappa^{\rm red}_{i-1}$, or $i=1$ and $\kappa_1>0$. By the definition of $\bm{\kappa}^{\rm red}$, this implies that $\kappa^{\rm red}_i=\kappa_i$ and $\kappa_{i+1}+ \lambda_{i+1} < \kappa_i + \lambda_i$. Consider $\bm{\kappa}'$ defined as $\kappa'_i=\kappa^{\rm red}_i-1$, and  $\kappa'_j=\kappa^{\rm red}_j$ for $j\not=i$.
Then $\bm{\kappa}'$ is contained inside
\[
\bm{\epsilon} = (\min \{\kappa_1, \kappa_i-1 \},\dots, \min \{\kappa_{i-1}, \kappa_i-1 \},\kappa_i-1,\kappa_{i+1},\dots,\kappa_t).
\]
The condition $\kappa_{i+1}+ \lambda_{i+1} < \kappa_i + \lambda_i$ guarantees that $\bm{\lambda}+\bm{\epsilon}$ is weakly decreasing.
\end{proof}

\subsection{The motivic Hirzebruch class of \texorpdfstring{$\Omega_{\bm{\lambda}}$}{Omegapq}} 

\begin{proposition}
\label{prop:1Omegafrom1W}
With assumptions as in \S\ref{sec:assumptions}, one has 
\[
(\pi \iota)_* \, T_y\left( \Omega_{\bm{\lambda}}\right) = \sum_{\bm{\kappa}\geq 0} d_{\bm{\kappa}} \,\iota_*\,T_y\left(W_{\bm{\lambda}}^{\bm{\kappa}}\right)
\]
where the sum is over $\bm{\kappa}=(0\leq \kappa_1\leq\dots\leq \kappa_t)$ such that $\bm{\lambda}+\bm{\kappa}$ is weakly decreasing~and
\begin{equation}
\label{dk}
d_{\bm{\kappa}}:= \mathop{\sum_{\bm{\kappa}'=(\kappa'_1\leq \cdots \leq \kappa'_t)}}_{\bm{\kappa}^{\rm red} \leq \bm{\kappa}' \leq \bm{\kappa}} (-y)^{|\bm{\kappa}'|}.
\end{equation}
\end{proposition}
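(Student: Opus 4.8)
The strategy is to push $T_y\left(\Omega_{\bm{\lambda}}\right)$ forward through $\phi\colon\Omega_{\bm{\lambda}}\to W_{\bm{\lambda}}$ by the fibration formula of Lemma~\ref{lemma:Fibra}, using the stratification of $W_{\bm{\lambda}}$ by the loci $W_{\bm{\lambda}}^{\bm{\kappa}}$ of \S\ref{ss:fiber}, and then to match the resulting coefficients with the $d_{\bm{\kappa}}$ of \eqref{dk} by an inclusion--exclusion argument resting on Lemma~\ref{lemma:kred}. From \S\ref{ss:fiber} I take: over the locally closed stratum $\left(W_{\bm{\lambda}}^{\bm{\kappa}}\right)^\circ$ the map $\phi$ is locally trivial with fiber the Schubert variety $S_{\bm{\kappa}^c}$ in a fiber of $\bGr(t,E_p)$; the closure of $\left(W_{\bm{\lambda}}^{\bm{\kappa}}\right)^\circ$ is $W_{\bm{\lambda}}^{\bm{\kappa}}$; and $\left(W_{\bm{\lambda}}^{\bm{\kappa}}\right)^\circ\subseteq W_{\bm{\lambda}}^{\bm{\epsilon}}$ exactly when $\bm{\epsilon}\le\bm{\kappa}$ componentwise, with the convention that $W_{\bm{\lambda}}^{\bm{\epsilon}}$ is empty unless $\bm{\epsilon}$ is \emph{feasible}, i.e.\ $\bm{\lambda}+\bm{\epsilon}$ is weakly decreasing. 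Decomposing $S_{\bm{\kappa}^c}$ into Schubert cells, each an affine space, the cell indexed by a weakly increasing $\bm{\kappa}'$ with $0\le\bm{\kappa}'\le\bm{\kappa}$ having dimension $|\bm{\kappa}'|$, and using additivity of $\chi_y$ with $\chi_y(\mathbb{A}^n)=(-y)^n$ (Example~\ref{ex:TyPnAn}), I obtain $\int_{S_{\bm{\kappa}^c}}T_y\left(S_{\bm{\kappa}^c}\right)=\sum_{0\le\bm{\kappa}'\le\bm{\kappa}}(-y)^{|\bm{\kappa}'|}$. Just as in the proof of Theorem~\ref{thm:1Omegatildefrom1W}, only this number enters Lemma~\ref{lemma:Fibra}, so the singularity of $S_{\bm{\kappa}^c}$ is harmless (alternatively, one may first refine the stratification of $\Omega_{\bm{\lambda}}$ by relative Schubert cells). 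Lemma~\ref{lemma:Fibra} then gives
\[
(\pi\iota)_*\,T_y\left(\Omega_{\bm{\lambda}}\right)=\sum_{\bm{\kappa}}e_{\bm{\kappa}}\,\iota_*\,T_y\left(W_{\bm{\lambda}}^{\bm{\kappa}}\right),\qquad e_{\bm{\kappa}}=\sum_{0\le\bm{\kappa}'\le\bm{\kappa}}(-y)^{|\bm{\kappa}'|}-\sum_{\substack{\bm{\epsilon}<\bm{\kappa}\\ \bm{\epsilon}\ \mathrm{feasible}}}e_{\bm{\epsilon}},
\]
with both outer sums over feasible $\bm{\kappa}$, and it remains to show $e_{\bm{\kappa}}=d_{\bm{\kappa}}$.

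This I would prove by induction on $|\bm{\kappa}|$ (equivalently on the codimension of $\left(W_{\bm{\lambda}}^{\bm{\kappa}}\right)^\circ$ in $W_{\bm{\lambda}}$); the base case $\bm{\kappa}=(0,\dots,0)$ is immediate, the fiber $S_{\bm{\kappa}^c}$ then being a point. By the recursion and the inductive hypothesis, $e_{\bm{\kappa}}=d_{\bm{\kappa}}$ is equivalent to
\[
\sum_{\substack{\bm{\epsilon}\le\bm{\kappa}\\ \bm{\epsilon}\ \mathrm{feasible}}}d_{\bm{\epsilon}}=\sum_{0\le\bm{\kappa}'\le\bm{\kappa}}(-y)^{|\bm{\kappa}'|}.
\]
Expanding $d_{\bm{\epsilon}}=\sum_{\bm{\epsilon}^{\mathrm{red}}\le\bm{\kappa}'\le\bm{\epsilon}}(-y)^{|\bm{\kappa}'|}$ and interchanging the order of summation, this reduces to the combinatorial assertion that, for each weakly increasing $\bm{\kappa}'\ge 0$, the number of feasible $\bm{\epsilon}$ with $\bm{\kappa}'\le\bm{\epsilon}\le\bm{\kappa}$ and $\bm{\epsilon}^{\mathrm{red}}\le\bm{\kappa}'$ is $1$ when $\bm{\kappa}'\le\bm{\kappa}$ and $0$ otherwise. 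The case $\bm{\kappa}'\not\le\bm{\kappa}$ is clear, since $\bm{\kappa}'\le\bm{\epsilon}\le\bm{\kappa}$ forces $\bm{\kappa}'\le\bm{\kappa}$.

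For $\bm{\kappa}'\le\bm{\kappa}$ I would combine two observations. First, feasible shapes are closed under componentwise minimum: if $\bm{\lambda}+\bm{\epsilon}_1$ and $\bm{\lambda}+\bm{\epsilon}_2$ are weakly decreasing then so is $\bm{\lambda}+\min(\bm{\epsilon}_1,\bm{\epsilon}_2)$, by a short case check at consecutive indices. Hence the set of feasible $\bm{\epsilon}$ with $\bm{\kappa}'\le\bm{\epsilon}\le\bm{\kappa}$, which contains $\bm{\kappa}$ and so is nonempty, has a unique minimal element $\bm{\epsilon}^{\ast}$, which is feasible. Second, read Lemma~\ref{lemma:kred} as the statement that $[\bm{\epsilon}^{\mathrm{red}},\bm{\epsilon}]$ is precisely the set of shapes contained in $\bm{\epsilon}$ but in no feasible shape $<\bm{\epsilon}$. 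Then $(\bm{\epsilon}^{\ast})^{\mathrm{red}}\le\bm{\kappa}'$: if $\bm{\kappa}'$ were contained in a feasible $\bm{\delta}<\bm{\epsilon}^{\ast}$, then $\bm{\delta}$ would be a smaller feasible shape in $[\bm{\kappa}',\bm{\kappa}]$; and conversely any feasible $\bm{\epsilon}\in[\bm{\kappa}',\bm{\kappa}]$ with $\bm{\epsilon}^{\mathrm{red}}\le\bm{\kappa}'$ must be $\bm{\epsilon}^{\ast}$, since otherwise $\bm{\epsilon}^{\ast}<\bm{\epsilon}$ is a feasible shape containing $\bm{\kappa}'$, contradicting $\bm{\kappa}'\in[\bm{\epsilon}^{\mathrm{red}},\bm{\epsilon}]$. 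This settles the count, hence $e_{\bm{\kappa}}=d_{\bm{\kappa}}$ and the Proposition.

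The main obstacle is precisely this last paragraph: one must wring from Lemma~\ref{lemma:kred} and the meet-stability of feasible shapes the sharp statement that, among feasible $\bm{\epsilon}$ in $[\bm{\kappa}',\bm{\kappa}]$, exactly the minimal one satisfies $\bm{\epsilon}^{\mathrm{red}}\le\bm{\kappa}'$. By contrast the geometric part --- the stratification of \S\ref{ss:fiber}, the Schubert-cell count of $\chi_y(S_{\bm{\kappa}^c})$, and the application of Lemma~\ref{lemma:Fibra} --- is routine, its only mild subtlety (singular fibers in Lemma~\ref{lemma:Fibra}) being the same one already dealt with in Theorem~\ref{thm:1Omegatildefrom1W}.
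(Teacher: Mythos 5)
Your proof is correct and is essentially the paper's own argument: stratify $W_{\bm{\lambda}}$ by the loci $W_{\bm{\lambda}}^{\bm{\kappa}}$, apply Lemma~\ref{lemma:Fibra} with generic fibers the Schubert varieties $S_{\bm{\kappa}^c}$, compute $\chi_y(S_{\bm{\kappa}^c})$ cell by cell using $\chi_y(\mathbb{A}^n)=(-y)^n$, and identify the coefficients via Lemma~\ref{lemma:kred}. The only difference is presentational: where the paper concludes directly from Lemma~\ref{lemma:kred} that $d_{\bm{\kappa}}$ collects exactly the cells indexed by shapes in $[\bm{\kappa}^{\rm red},\bm{\kappa}]$, you spell out the underlying bookkeeping (meet-stability of feasible shapes, the unique minimal feasible element of $[\bm{\kappa}',\bm{\kappa}]$, and the ``exactly one'' count in the interchanged double sum), which is a sound, slightly more explicit rendering of the same inclusion--exclusion step.
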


\begin{example}
When $y=-1$, the topological Euler  characteristic of the fibers \eqref{eq:fiberomega} of $\phi$ and consequently the coefficients $d_{\bm{\kappa}}$ are computed as follows.

For $\bm{\kappa}=(0\leq \kappa_1\leq \cdots \leq \kappa_l)$, the topological Euler  characteristic of the Schubert variety $S_{\bm{\kappa}^c}$  in $\textrm{Gr}(l,\mathbb{C}^p)$ associated with the partition complementary to $\bm{\kappa}$ inside the $l\times(p-l)$ rectangle can be computed as
\begin{equation}
\label{chiSkappac}
\chi \left(S_{\bm{\kappa}^c} \right) = p \left(\bm{\kappa} \right) := \left| {\kappa_{l+1-j} + l-i+1 \choose 1+j-i } \right|_{1\leq, i,j \leq l}.
\end{equation}
Indeed, $\chi \left(S_{\bm{\kappa}^c} \right)$ coincides with the number  of partitions  inside $(\kappa_l,\dots,\kappa_1)$, or equivalently, the number of shapes inside the shape $\bm{\kappa}$. This equals the number of non-intersecting lattice paths 
\begin{align*}
\mbox{from the points} & \qquad (0,0), (1,0), \dots, (l-1,0) \\
\mbox{to the points} & \qquad (1,\kappa_l+l-1), (2,\kappa_{l-1}+l-2),\dots, (l,\kappa_1)
\end{align*}
(see e.g., \cite{bhstackex}). From the Gessel-Viennot formula \cite{gv}, this number equals the above $p(\bm{\kappa})$, hence the first equality in \eqref{chiSkappac}. 

By definition \eqref{dk}, when $y=-1$ the coefficient $d_{\bm{\kappa}}$ counts the number of shapes inside the shape $\bm{\kappa}$ and  containing the shape $\bm{\kappa}^{\rm red}$, that is, the number of shapes in $\bm{\kappa}\setminus \bm{\kappa}^{\rm red}$.
Now Lemma \ref{lemma:kred} implies that $\bm{\kappa}\setminus \bm{\kappa}^{\rm red}$ is a disjoint union of shorter weakly increasing sequences, say $\bm{\kappa}^{(1)}, \bm{\kappa}^{(2)},\dots$. The number of shapes in $\bm{\kappa}\setminus \bm{\kappa}^{\rm red}$ is equal to the product of
the number of shapes inside each $\bm{\kappa}^{(i)}$. It follows that $d_{\bm{\kappa}}$ can be computed as the product of the quantities $p\left( \bm{\kappa}^{(i)}\right)$.

For instance, when $\bm{\lambda}=(4,4,1,1)$ and $\bm{\kappa}=(1,1,3,3)$, one has $\bm{\kappa}^{\rm red}=(0,1,1,3)$, hence $\bm{\kappa}\setminus \bm{\kappa}^{\rm red}$ is the disjoint union of the length one sequences $(1)$ and $(2)$ (see also how the white tiles representing $\bm{\kappa}\setminus \bm{\kappa}^{\rm red}$ are indeed union of disjoint shapes in Example \ref{ex:shaded}) and indeed $d_{\bm{\kappa}}=p((1))\cdot p((2))=2\cdot 3=6$.
\end{example}

\begin{example}
\label{ex:dkappaalllaeq}
When $\lambda_1=\cdots=\lambda_t$, one has necessarily $\kappa_1=\cdots=\kappa_t$, hence we have $\bm{\kappa}=(\kappa,\dots,\kappa)$. 
For $\bm{\kappa}=(\kappa,\dots,\kappa)=:\kappa^t$, one has $\bm{\kappa}^{\rm red}=(0,\dots,0,\kappa)$ and thus $\bm{\kappa}\setminus \bm{\kappa}^{\rm red}\cong \kappa^{t-1}$. 
For $y=-t^2$, the right-hand side of \eqref{dk} is $t^{2|\bm{\kappa}^{\rm red}|} P_{(\kappa, t-1+\kappa)}(t)$, hence we have
\[
d_{\bm{\kappa}} = t^{2\kappa} P_{(\kappa, t-1+\kappa)}(t), 
\]
where
$P_{(\kappa, t-1+\kappa)}(t)$ is the Poincar\'e polynomial  of the Grassmannian $\textrm{Gr}(\kappa,\mathbb{C}^{t-1+\kappa})$.
Since $P_{(\kappa, t-1+\kappa)}(t)$ is given by the $t^2$-binomial coefficient 
\[
P_{(\kappa, t-1+\kappa)}(t) = \left[\begin{array}{c} t-1+\kappa \\ \kappa\end{array}\right]_{t^2} := 
\frac{\prod_{i=1}^{t-1+\kappa}\, \left(1-t^{2i}\right)}{\prod_{i=1}^{\kappa} \left(1-t^{2i}\right)\prod_{i=1}^{t-1} \left(1-t^{2i}\right)},
\]
we have
\begin{equation*}
(\pi \iota)_* \,T_y\left( \Omega_{\bm{\lambda}}\right) = \sum_{\kappa\geq 0} 
(-y)^{\kappa} \left[\begin{array}{c}t-1+\kappa \\ \kappa\end{array}\right]_{-y}
\, \iota_*\,T_y\left(W_{\bm{\lambda}}^{\bm{\kappa}}\right)
\end{equation*}
where $\bm{\kappa}=\kappa^t$.
For $y=-1$, the coefficients in the sum specialize to the binomial coefficients ${t-1+\kappa \choose \kappa}$.
\end{example}

\begin{example}
\label{ex:shaded}
When $\bm{\lambda} =(4,4,1,1)$, one has that $(\pi \iota)_* \,T_y\left(\Omega_{\bm{\lambda}}\right)$ equals
\begin{align*}
 &1 
- y(1-y) \,
\begin{ytableau}
\hfil\\
*(gray)
\end{ytableau}
+ y^2\left(1-y+y^2\right) \,
\begin{ytableau}
\hfil & \hfil\\
*(gray) & *(gray)
\end{ytableau}
- y^3\left(1-y+y^2-y^3\right) \,
\begin{ytableau}
\hfil & \hfil & \hfil\\
*(gray) & *(gray) & *(gray)
\end{ytableau}
\\
&{} 
- y^3(1-y) \,
\begin{ytableau}
\hfil\\
*(gray)\\
*(gray)\\
*(gray)
\end{ytableau}
+ y^4(1-y)^2 \,
\begin{ytableau}
\none & \hfil\\
\none & *(gray)\\
\hfil & *(gray)\\
*(gray) & *(gray)
\end{ytableau}
- y^5 \left(1-y+y^2\right)(1-y) \,
\begin{ytableau}
\none & \none & \hfil\\
\none & \none & *(gray)\\
\hfil & \hfil & *(gray)\\
*(gray) & *(gray) & *(gray)
\end{ytableau}
\\
&{} + y^4\left(1-y+2y^2-3y^3+3y^4-2y^5+y^6\right) \,
\begin{ytableau}
\none & \none & \none & \hfil\\
\none & \none & \none & \hfil\\
\hfil & \hfil & \hfil  & \hfil\\
*(gray) & *(gray) & *(gray) & *(gray)
\end{ytableau}
\\
&{}+y^6\left(1-y+y^2\right) \,
\begin{ytableau}
\hfil & \hfil\\
*(gray) & *(gray)\\
*(gray) & *(gray)\\
*(gray) & *(gray)
\end{ytableau}
-y^7(1-y)\left(1-y+y^2\right) \,
\begin{ytableau}
\none & \hfil & \hfil\\
\none & *(gray) & *(gray)\\
\hfil & *(gray) & *(gray)\\
*(gray) & *(gray) & *(gray)
\end{ytableau}\\
&{}+ y^8\left(1-y+y^2\right)^2 \,
\begin{ytableau}
\none & \none & \hfil & \hfil\\
\none & \none & *(gray) & *(gray)\\
\hfil & \hfil & *(gray) & *(gray)\\
*(gray) & *(gray) & *(gray) & *(gray)
\end{ytableau}\\
&{}-y^5\left(1-y+2y^2-3y^3+4y^4-5y^5+5y^6-4y^7+2y^8-y^9\right) \,
\begin{ytableau}
\none & \none & \none & \hfil & \hfil\\
\none & \none & \none & \hfil & \hfil\\
\hfil & \hfil & \hfil  & \hfil & \hfil\\
*(gray) & *(gray) & *(gray) & *(gray) & *(gray)
\end{ytableau}
\cdots.
\end{align*}
Here for simplicity, the shape $\bm{\kappa}$ with $\kappa_i$ boxes in the $i$th row stands for $\iota_*\,T_y\left(W_{\bm{\lambda}}^{\bm{\kappa}}\right)$. 
Inside each $\bm{\kappa}$, the shape $\bm{\kappa}^{\rm red}$ is shaded. Note how the complement of $\bm{\kappa}^{\rm red}$ in each $\bm{\kappa}$ is a disjoint union of shapes.
\end{example}

\begin{proof}[Proof of Proposition \ref{prop:1Omegafrom1W}]
Write
\[
(\pi \iota)_* T_y\left( \Omega_{\bm{\lambda}}\right) = \sum_{\bm{\kappa}\geq 0} d_{\bm{\kappa}}\,\iota_*\, T_y\left(W_{\bm{\lambda}}^{\bm{\kappa}}\right)
\]
for some coefficients $d_{\bm{\kappa}}$.
One has $W_{\bm{\lambda}}^{\bm{\epsilon}} \supset W_{\bm{\lambda}}^{\bm{\kappa}}$ exactly when $\bm{\epsilon}$ is contained inside $\bm{\kappa}$, that is, $\epsilon_i \leq \kappa_i$ for all $i$.
From \S \ref{ss:fiber}, the fiber of $\phi$ over a general point in $W_{\bm{\lambda}}^{\bm{\kappa}}$ is the Schubert variety $S_{\bm{\kappa}^c}$  in $\textrm{Gr}(t,\mathbb{C}^p)$ associated with the partition $\bm{\kappa}^c$ complementary to $\bm{\kappa}$ inside the $t\times(p-t)$ rectangle. Hence from Lemma \ref{lemma:Fibra},  we have
\[
d_{\bm{\kappa}}= \chi_y (S_{\bm{\kappa}^c})-\sum_{\bm{\epsilon} < \bm{\kappa}} d_{\bm{\epsilon}}
\] 
where the sum is over $\bm{\epsilon}$ such that $\bm{\lambda}+\bm{\epsilon}$ is weakly decreasing, and $\epsilon_i \leq \kappa_i$ for all $i$, with at least one strict inequality. 

The Schubert variety $S_{\bm{\kappa}^c}$ is the union of the Schubert cells corresponding to
 (weakly decreasing) partitions contained inside $(\kappa_t,\dots,\kappa_1)$, or equivalently, shapes inside the shape $\bm{\kappa}$.
The sum
\[
\sum_{\bm{\epsilon}< \bm{\kappa}} d_{\bm{\epsilon}}
\]
is the sum of the Hirzebruch $\chi_y$-genera of Schubert cells corresponding to shapes contained inside some $\bm{\epsilon}<\bm{\kappa}$ such that $\bm{\lambda}+\bm{\epsilon}$ is weakly decreasing.
From Lemma \ref{lemma:kred},
it follows that $d_{\bm{\kappa}}$ is the sum of the Hirzebruch $\chi_y$-genera of Schubert cells corresponding to shapes contained inside $\bm{\kappa}$ and containing $\bm{\kappa}^{\rm red}$. Since a Schubert cell corresponding to a shape $\bm{\kappa}'$ is isomorphic to the affine space $\mathbb{A}^{|\bm{\kappa}'|}_\mathbb{C}$, its Hirzebruch $\chi_y$-genus is $(-y)^{|\bm{\kappa}'|}$ (Example \ref{ex:TyPnAn}). The statement follows.
\end{proof}

%%%%%%%%%%%%%%%%%%%%%%%%%%%%%%%
%%%%%%%%%%%%%%%%%%%%%%%%%%%%%%%
%%%%%%%%%%%%%%%%%%%%%%%%%%%%%%%

\section{Pointed Brill-Noether varieties}
\label{sec:pBNvar}
For a smooth pointed curve $(C,P)$ and a  sequence  \mbox{$\bm{a}: 0\leq a_0 < \cdots <a_r \leq d$}, the \textit{pointed Brill-Noether variety of line bundles} $W^{\bm{a}}_d(C, P)$ is defined as
\[
W^{\bm{a}}_d(C, P) := \left\{ L\in \mathrm{Pic}^d(C) \, | \, h^0(C, L\otimes \mathscr{O}_C(-a_iP))\geq r+1-i \,\mbox{ for all $i$}\right\}.
\]
The \textit{pointed Brill-Noether variety of linear series} $G^{\bm{a}}_d(C, P)$ is defined as
\[
G^{\bm{a}}_d(C, P) := \left\{ (L,V) \, \Big| \, 
\begin{array}{l}
L\in \mathrm{Pic}^d(C), \, V\subseteq H^0(C, L), \, \dim (V)=r+1, \mbox{ and}\\ 
\dim \left( V\cap H^0(C, L\otimes \mathscr{O}_C(-a_iP))\right)\geq r+1-i \,\mbox{ for all $i$}
\end{array}
\right\}.
\]

The variety $W^{\bm{a}}_d(C, P)$ has  the structure of a Grassmannian degeneracy locus in $\mathrm{Pic}^d(C)$, and the variety $G^{\bm{a}}_d(C, P)$ is of type $\Omega_{\bm{\lambda}}$, as in \S\ref{sec:Omega}. We briefly review this in \S\ref{sec:BNconstruction},   we verify  the assumptions from \S\ref{sec:assumptions} in \S\ref{sec:assumptionsBN}, and  apply Theorems \ref{thm:1Omegatildefrom1W} and \ref{thm:TyOmegatilde} to compute the motivic Hirzebruch class of pointed Brill-Noether varieties in \S\ref{sec:motrefBN}. Finally, we conclude with some examples.

\subsection{The construction} 
\label{sec:BNconstruction}
Choose a positive integer $n$ large enough so that line bundles of degree $d+n$ are non-special, that is, $n\geq 2g-1-d$. Fix a Poincar\'e line bundle $\mathscr{L}$ on $C\times \mathrm{Pic}^d(C)$, normalized so that $\mathscr{L}|_{\{P\}\times \mathrm{Pic}^d(C)}$ is trivial. Consider the following vector bundles on $\mathrm{Pic}^d(C)$:
\begin{align*}
\mathscr{E} &:= \left(\pi_2\right)_* \left( \mathscr{L}\otimes\pi_1^*\mathscr{O}_C(nP) \right),\\
\mathscr{F}_i &:= \left(\pi_2\right)_* \left( \mathscr{L}\otimes\pi_1^*\mathscr{O}_{(n+a_{r+1-i})P} \right) \qquad \mbox{for $1\leq i \leq r+1$}.
\end{align*}
Here $\pi_1$ and $\pi_2$ are the projections from $C\times \mathrm{Pic}^d(C)$ to $C$ and $\mathrm{Pic}^d(C)$, respectively. One computes
\begin{align}
\begin{split}
\label{eq:pqi}
p &:= \mathrm{rank}\left( \mathscr{E}\right) = d+n-g+1, \\
q_i & := \mathrm{rank}\left( \mathscr{F}_i\right) = n+a_{r+1-i} \qquad \mbox{for $1\leq i \leq r+1$}.
\end{split}
\end{align}
There are natural maps
\[
\mathscr{E} \rightarrow \mathscr{F}_1 \twoheadrightarrow  \mathscr{F}_2 \cdots \twoheadrightarrow \mathscr{F}_{r+1}
\]
and  $W^{\bm{a}}_d(C, P)$ is the Grassmannian degeneracy locus  with   partition $\bm{\lambda}=(\lambda_1,\dots, \lambda_{r+1})$ from \S\ref{sec:la} equal to
\begin{equation}
\label{eq:laBN}
\lambda_i := g-d+r + a_{r+1-i} - (r+1-i) \qquad \mbox{for $1\leq i \leq r+1$.}
\end{equation}
One has $c(\mathscr{F}_i)=0$ for all $i$, and all  classes $c(i)$ from \S\ref{sec:cidef} are equal to $c=c(-\mathscr{E})=\mathrm{e}^\theta$ in $H^*(\mathrm{Pic}^d(C))$, where $\theta$ is the cohomology class of the theta divisor \cite[\S VIII]{MR770932}. Finally, note that $T_y\left(\mathrm{Pic}^d(C)\right)=1$, as Abelian varieties have trivial tangent bundles.

\subsection{Dimension and singular locus} 
\label{sec:assumptionsBN}
The \textit{one-pointed Brill-Noether Theorem} \cite[\S 1]{MR910206} says that for a \textit{general} smooth pointed curve $(C,P)$: (i) the varieties $W^{\bm{a}}_d(C, P)$ and $G^{\bm{a}}_d(C, P)$ are non-empty if and only if $g\geq\sum_{i=0}^r \mathrm{max}\{0, g-d+r+a_i-i\}$, 
and (ii) when non-empty, $G^{\bm{a}}_d(C, P)$ has dimension equal to the \textit{one-pointed Brill-Noether number} $\rho(g,r,d,\bm{a}):=g-\sum_{i=0}^r(g-d+r+a_i-i)=g-|\bm{\lambda}|$; the same holds for $W^{\bm{a}}_d(C, P)$ when  $\rho(g,r,d,\bm{a})\leq g$.

The proof in \cite{MR910206} uses degenerations to singular curves. However, explicit examples of \textit{smooth} pointed curves of any genus verifying the one-pointed Brill-Noether Theorem and defined over $\mathbb{Q}$ were provided in \cite{ft2}.

Furthermore, the \textit{one-pointed Gieseker-Petri Theorem} \cite{chan2019gieseker} characterizes the smooth locus of $G^{\bm{a}}_d(C, P)$ for a \textit{general} smooth pointed curve $(C,P)$, and implies that  the singular locus of $G^{\bm{a}}_d(C, P)$ is contained in the locus of linear series with excess vanishing  at $P$. Since for $\rho(g,r,d,\bm{a})\leq g$ the forgetful map $\pi\colon G^{\bm{a}}_d(C, P) \rightarrow W^{\bm{a}}_d(C, P)$ is an isomorphism when restricted over the  locus 
\[
W^{\bm{a}}_d(C, P)^\circ := \left\{ L\in \mathrm{Pic}^d(C) \, | \, h^0(C, L\otimes \mathscr{O}_C(-a_iP))= r+1-i \,\mbox{ for all $i$}\right\} \subseteq  W^{\bm{a}}_d(C, P)
\]
and the smooth locus of $G^{\bm{a}}_d(C, P)$ as described in \cite{chan2019gieseker} includes $\pi^{-1}\left( W^{\bm{a}}_d(C, P)^\circ \right)$, it follows that $W^{\bm{a}}_d(C, P)^\circ$ is smooth for a {general}  $(C,P)$ and $\rho(g,r,d,\bm{a})\leq g$. 

The stratification of $W^{\bm{a}}_d(C, P)$ from \S\ref{sec:filtr} consists of strata whose closures are themselves pointed Brill-Noether varieties of line bundles inside $\mathrm{Pic}^d(C)$. Namely, the strata are
\begin{equation}
\label{eq:Wa+}
W^{\bm{a^+}}_d(C, P) :=\left\{ L\in \mathrm{Pic}^d(C) \, | \, h^0(C, L\otimes \mathscr{O}_C(-a_iP))\geq k^+_{r+1-i} \,\mbox{ for all $i$}\right\}
\end{equation}
for $\bm{k^+}\geq\bm{k}=(1,\dots, r+1)$. (Note that some of the conditions defining $W^{\bm{a^+}}_d(C, P)$ may be redundant.)

In particular, $W^{\bm{a}}_d(C, P)$ satisfies the transversality assumption from \S\ref{sec:assumptions} for a general $(C,P)$ and $\rho(g,r,d,\bm{a})\leq g$. 

While in this paper we only treat the one-pointed case, these results are also known more generally for two-pointed Brill-Noether varieties with  appropriate changes, see \cite{MR910206, o, chan2019gieseker}.

\subsection{Motivic Hirzebruch class of pointed Brill-Noether varieties}
\label{sec:motrefBN}
For the pointed Brill-Noether variety $W_d^{\bm{a}}(C,P)$ with $\rho(g,r,d,\bm{a})\leq g$, the (push-forward of the) motivic Hirzebruch class of its resolution  from Theorem \ref{thm:TyOmegatilde} equals 
\begin{equation}
\label{eq:TyOmegatilde}
A_{\bm{\lambda}}:=\prod_{i=1}^{r+1} \dfrac{\prod_{k=1}^p Q_y(\alpha_k+R_i)}{\prod_{j=1}^{i-1}Q_y(R_i-R_j)} \left(Q_y(R_i)\right)^{-q_i} \left|c_{\lambda_i+j-i}\right|_{1\leq i,j\leq r+1}
\end{equation}
where $\alpha_1,\dots,\alpha_p$ are the Chern roots of the vector bundle $\mathscr{E}$. These satisfy
\begin{equation}
\label{eq:alpharel}
\sum_{k=1}^p \alpha_k = -\theta \qquad \mbox{and} \qquad \sum_{k=1}^p \alpha_k^i =0 \quad \mbox{for $i>1$}
\end{equation}
\cite[pg.~336]{MR770932}. The expression \eqref{eq:TyOmegatilde} is symmetric in the $\alpha_k$, hence after expanding and using \eqref{eq:alpharel}, can be rewritten as a polynomial in $\theta$ with coefficients in $H^*\left(\mathrm{Pic}^d(C) \right)[y]$.
For instance, the first terms of $\prod_{k=1}^p Q_y(\alpha_k+R_i)$ are
\[
Q_y(R_i)^p - \frac{1-y}{2} \,\theta \, Q_y(R_i)^{p-1}
 - \frac{(1+y)^2}{6} \theta \, R_i\, Q_y(R_i)^{p-1} + \frac{(1-y)^2}{8}\, \theta^2 \, Q_y(R_i)^{p-2} + \dots.
\]
When $y=-1$, the expression $A_{\bm{\lambda}}$ reduces as follows:
\[
\prod_{i> j}  \frac{1}{1+R_i-R_j} 
\left| \frac{(1+R_i)^{-\lambda_i+i}}{\mathrm{e}^{\frac{\theta}{1+R_i}}} c_{\lambda_i+j-i} \right|_{1\leq i,j\leq k_{r+1}}.
\]

Theorems \ref{thm:1Omegatildefrom1W} and \ref{thm:TyOmegatilde} imply:

\begin{corollary}
\label{cor:BNW}
For a general smooth pointed curve $(C,P)$ of genus $g$ and for a sequence $\bm{a}: 0\leq a_0 < \cdots <a_r \leq d$ such that $\rho(g,r,d,\bm{a})\leq g$, one has
\[
 A_{\bm{\lambda}} = \sum_{\bm{k^+}} (-y)^{|\bm{k^+}|-|\bm{k}|}  \, \iota_* \,T_y\left( W^{\bm{a^+}}_d(C, P) \right) \qquad \mbox{in $H^*\left( \mathrm{Pic}^d(C)\right)[y]$,}
\]
where  
the sum is over the set of weakly increasing sequences $\bm{k^+}\geq\bm{k}=(1,\dots, r+1)$, and for a given  $\bm{k^+}$, the locus $W^{\bm{a^+}}_d(C, P)\subseteq  W^{\bm{a}}_d(C, P)$ is as in \eqref{eq:Wa+}. 

By the inclusion-exclusion principle, this determines the class $\iota_* \,T_y\left( W^{\bm{a}}_d(C, P) \right)$ in $H^*\left( \mathrm{Pic}^d(C)\right)[y]$ in terms of classes of type 
$A_{\bm{\lambda}}$.
\end{corollary}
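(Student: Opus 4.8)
The proof specializes Theorems~\ref{thm:1Omegatildefrom1W}, \ref{thm:TyOmegatilde}, and \ref{thm:final} to the set-up of \S\ref{sec:BNconstruction}, so the argument is largely a matter of unwinding definitions.

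First I would record the dictionary. The locus $W^{\bm{a}}_d(C,P)$ is the Grassmannian degeneracy locus $W_{\bt}$ for the triple $\bt=(\bm{k},\bm{p},\bm{q})$ with $\bm{k}=(1,\dots,r+1)$, $\bm{p}=(p,\dots,p)$, $\bm{q}=(q_1,\dots,q_{r+1})$, where $p$ and $q_i$ are as in \eqref{eq:pqi}; since $\bm{k}$ is already a string of consecutive integers, its inflation is $\bm{k}'=\bm{k}$, so the strata indexing the stratification of \S\ref{sec:filtr} are the weakly increasing sequences $\bm{k^+}\geq\bm{k}=(1,\dots,r+1)$. The reversal $i\leftrightarrow r+1-i$ forced by $q_i=n+a_{r+1-i}$ identifies the stratum $W_{\btp}$, $\btp=(\bm{k^+},\bm{p}',\bm{q}')$, with the pointed locus $W^{\bm{a^+}}_d(C,P)$ of \eqref{eq:Wa+}, and the assumptions of \S\ref{sec:assumptions} hold for all these strata (for general $(C,P)$ with $\rho(g,r,d,\bm{a})\leq g$) by the discussion in \S\ref{sec:assumptionsBN}.

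Next I would apply Theorem~\ref{thm:1Omegatildefrom1W} directly to get
\[
(\pi\iota)_*\,T_y\big(\widetilde{\Omega}_{\bt}\big)=\sum_{\bm{k^+}}(-y)^{|\bm{k^+}|-|\bm{k}|}\,\iota_*\,T_y\big(W^{\bm{a^+}}_d(C,P)\big),
\]
and evaluate the left-hand side by Theorem~\ref{thm:TyOmegatilde}. Substituting the Brill--Noether data---$T_y(\mathrm{Pic}^d(C))=1$ since the tangent bundle is trivial, all $c(i)$ equal $c(-\mathscr{E})=\mathrm{e}^\theta$ so that $\iota_*[W_{\bt}]=|c_{\lambda_i+j-i}|_{1\leq i,j\leq r+1}$ with $\bm{\lambda}$ as in \eqref{eq:laBN}, the Chern roots of $\mathscr{F}_i$ all zero so that $1/T_y(i)=T_y(R_i\otimes\mathscr{E})/T_y(R_i\otimes\mathscr{F}_i)=\prod_{k=1}^p Q_y(\alpha_k+R_i)\cdot Q_y(R_i)^{-q_i}$, and $S=\{(i,j):i<j\}$ so that $\prod_{(i,j)\in S}T_y(R_j-R_i)^{-1}=\prod_{i=1}^{r+1}\prod_{j=1}^{i-1}Q_y(R_i-R_j)^{-1}$---collects the factors into exactly the operator $A_{\bm{\lambda}}$ of \eqref{eq:TyOmegatilde}. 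Hence $(\pi\iota)_*\,T_y(\widetilde{\Omega}_{\bt})=A_{\bm{\lambda}}$, which is the first displayed identity of the Corollary.

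Finally, for the inclusion--exclusion assertion I would run the same three theorems on each stratum $W^{\bm{a^+}}_d(C,P)$---itself a pointed Brill--Noether variety of line bundles in $\mathrm{Pic}^d(C)$, whose resolution is governed by the $A$-class built from the inflation of its own defining triple. This yields, for every $\bm{k^+}\geq\bm{k}$, an identity expressing an $A$-type class as $\sum_{\bm{k^{++}}\geq\bm{k^+}}(-y)^{|\bm{k^{++}}|-|\bm{k^+}|}\,\iota_*\,T_y(W^{\bm{a^{++}}}_d(C,P))$. Because each stratum has codimension $|\bm{\lambda}_{\btp}|\leq g=\dim\mathrm{Pic}^d(C)$, only finitely many strata are nonempty, and the resulting linear system is unitriangular for the partial order $\bm{k^+}\leq\bm{k^{++}}$ (diagonal coefficient $1$), hence invertible over $\mathbb{Z}[y]$; inverting it---the inclusion--exclusion of Theorem~\ref{thm:final}---expresses $\iota_*\,T_y(W^{\bm{a}}_d(C,P))$ in terms of $A$-type classes. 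The one place to be careful is precisely this recursion: as Remark~\ref{rmk:lakavsla} warns, the partition (hence the $A$-class) attached to a stratum is not read off naively from $\bm{k^+}$ but from the inflation of its triple, so one must track which triple governs each sub-locus that appears; granting that, every remaining step is a direct substitution into formulas already proved.
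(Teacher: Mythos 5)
Your proposal is correct and follows exactly the paper's route: the paper derives Corollary~\ref{cor:BNW} by specializing Theorem~\ref{thm:1Omegatildefrom1W} to the triple of \S\ref{sec:BNconstruction} (with the strata identified as the loci $W^{\bm{a^+}}_d(C,P)$ in \S\ref{sec:assumptionsBN} and the hypotheses of \S\ref{sec:assumptions} checked there), evaluating the left-hand side via Theorem~\ref{thm:TyOmegatilde} with $T_y(\mathrm{Pic}^d(C))=1$, $c(\mathscr{F}_i)=0$, and $c(i)=\mathrm{e}^\theta$ to obtain $A_{\bm{\lambda}}$, and then invoking inclusion-exclusion as in Theorem~\ref{thm:final}. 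Your write-up merely makes explicit the dictionary (including the index reversal $i\leftrightarrow r+1-i$) and the unitriangularity behind the inclusion-exclusion step, which the paper leaves implicit.
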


Similarly, Proposition \ref{prop:1Omegafrom1W} implies:

\begin{corollary}
\label{cor:BNG}
For a general smooth pointed curve $(C,P)$ of genus $g$ and for a sequence $\bm{a}: 0\leq a_0 < \cdots <a_r \leq d$ such that $\rho(g,r,d,\bm{a})\leq g$, one has
\[
(\pi \iota)_* \, T_y \left(G^{\bm{a}}_d(C, P)\right) = \sum_{\bm{\kappa}\geq 0} d_{\bm{\kappa}}\, \iota_*\,T_y\left(W^{\bm{a}(\bm{\kappa)}}_d(C, P)\right) \qquad \mbox{in $H^*\left( \mathrm{Pic}^d(C)\right)[y]$,}
\]
where the sum is over $\bm{\kappa}=(0\leq \kappa_1\leq\dots\leq \kappa_{r+1})$ such that $\bm{\lambda}+\bm{\kappa}$ is weakly decreasing, the coefficients $d_{\bm{\kappa}}$ are as in \eqref{dk}, and $\bm{a}(\bm{\kappa})$ is the strictly increasing sequence
\[
\bm{a}(\bm{\kappa})=\left(a_0, a_0+1, \dots, a_0+\kappa_{r+1}-\kappa_r,  \dots,  a_r, a_r +1,\dots, a_r+\kappa_1\right).
\]
\end{corollary}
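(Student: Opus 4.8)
The plan is to obtain the statement as a direct consequence of Proposition~\ref{prop:1Omegafrom1W}, once $G^{\bm{a}}_d(C,P)$ has been identified with a locus of type $\Omega_{\bm{\lambda}}$ over $X=\mathrm{Pic}^d(C)$ and the strata of its base with pointed Brill-Noether varieties of line bundles. First I would recall from~\S\ref{sec:BNconstruction} the bundles $\mathscr{E}$ and $\mathscr{F}_1\twoheadrightarrow\cdots\twoheadrightarrow\mathscr{F}_{r+1}$ on $\mathrm{Pic}^d(C)$, for which the map $\mathscr{E}\to\mathscr{F}_i$ has, over a point $L$, kernel $H^0(C,L\otimes\mathscr{O}_C(-a_{r+1-i}P))$; consequently $W_{\bm{\lambda}}=W^{\bm{a}}_d(C,P)$ with $\bm{\lambda}$ as in~\eqref{eq:laBN}, and the subvariety $\Omega_{\bm{\lambda}}$ of $\bGr(r+1,\mathscr{E})$ of~\S\ref{sec:Omega} is exactly $G^{\bm{a}}_d(C,P)$, the composite $\pi\iota$ being induced by the forgetful map to $\mathrm{Pic}^d(C)$. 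Since $T_y(\mathrm{Pic}^d(C))=1$, all classes involved are elements of $H^*(\mathrm{Pic}^d(C))[y]$.

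Next I would verify the standing hypotheses of~\S\ref{sec:assumptions}: for a general $(C,P)$ with $\rho(g,r,d,\bm{a})\le g$, every locally closed stratum $(W_{\bm{\lambda}}^{\bm{\kappa}})^{\circ}$ should be smooth of the expected dimension. This is supplied by~\S\ref{sec:assumptionsBN}; the key point is that each $W_{\bm{\lambda}}^{\bm{\kappa}}$ is itself a pointed Brill-Noether variety of line bundles --- namely $W^{\bm{a}(\bm{\kappa})}_d(C,P)$, as explained below --- so the one-pointed Brill-Noether and one-pointed Gieseker-Petri theorems recalled there apply; the hypothesis that the Brill-Noether number of $\bm{a}(\bm{\kappa})$ be at most $g$ holds because $W^{\bm{a}(\bm{\kappa})}_d(C,P)\subseteq W^{\bm{a}}_d(C,P)$ can only increase the expected codimension (as in~\S\ref{sec:filtr}), so that number is at most $\rho(g,r,d,\bm{a})\le g$. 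With the hypotheses in place, Proposition~\ref{prop:1Omegafrom1W} gives
\[
(\pi\iota)_*\,T_y(\Omega_{\bm{\lambda}})=\sum_{\bm{\kappa}\ge 0}d_{\bm{\kappa}}\,\iota_*\,T_y(W_{\bm{\lambda}}^{\bm{\kappa}}),
\]
the sum over $\bm{\kappa}=(0\le\kappa_1\le\cdots\le\kappa_{r+1})$ with $\bm{\lambda}+\bm{\kappa}$ weakly decreasing and $d_{\bm{\kappa}}$ as in~\eqref{dk}.

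The step requiring genuine care --- the only content not already available --- is the identification $W_{\bm{\lambda}}^{\bm{\kappa}}=W^{\bm{a}(\bm{\kappa})}_d(C,P)$. Re-indexing by $j=r+1-i$, the locus $W_{\bm{\lambda}}^{\bm{\kappa}}$ is cut out by the conditions $h^0(C,L\otimes\mathscr{O}_C(-a_jP))\ge(r+1-j)+\kappa_{r+1-j}$ for $0\le j\le r$. The sequence $\bm{a}(\bm{\kappa})$ is obtained by inserting, immediately after each $a_j$, a run of $\kappa_{r+1-j}-\kappa_{r-j}$ consecutive integers (with the convention $\kappa_0:=0$); it has length $r+1+\kappa_{r+1}$, and $a_j$ occupies position $j+\kappa_{r+1}-\kappa_{r+1-j}$ in it, so the defining condition of $W^{\bm{a}(\bm{\kappa})}_d(C,P)$ at that slot reads precisely $h^0(C,L\otimes\mathscr{O}_C(-a_jP))\ge(r+1-j)+\kappa_{r+1-j}$. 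The remaining conditions of $W^{\bm{a}(\bm{\kappa})}_d(C,P)$, at the inserted vanishing orders $a_j+s$, are redundant: since $h^0(C,L\otimes\mathscr{O}_C(-(m+1)P))\ge h^0(C,L\otimes\mathscr{O}_C(-mP))-1$ for every $m$, the condition at $a_j$ already forces $h^0(C,L\otimes\mathscr{O}_C(-(a_j+s)P))\ge(r+1-j)+\kappa_{r+1-j}-s$, which is exactly what slot $a_j+s$ demands; the same estimate shows both loci are empty when $a_r+\kappa_1>d$ renders $\bm{a}(\bm{\kappa})$ inadmissible, so that convention introduces no discrepancy. Substituting $\Omega_{\bm{\lambda}}=G^{\bm{a}}_d(C,P)$ and $W_{\bm{\lambda}}^{\bm{\kappa}}=W^{\bm{a}(\bm{\kappa})}_d(C,P)$ into the displayed formula yields the statement. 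I expect the main obstacle to be purely a matter of bookkeeping: getting all the index shifts in this identification right, and confirming that passing to the essential conditions is legitimate at the scheme-theoretic level --- though, as~\S\ref{sec:assumptions} forces all these loci to be reduced of expected dimension, only the underlying sets matter for the motivic class.
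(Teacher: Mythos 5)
Your proposal is correct and follows essentially the same route as the paper: the paper derives Corollary \ref{cor:BNG} directly from Proposition \ref{prop:1Omegafrom1W}, using the identification of $G^{\bm{a}}_d(C,P)$ with $\Omega_{\bm{\lambda}}$ from \S\ref{sec:BNconstruction}--\S\ref{sec:Omega}, the transversality statements of \S\ref{sec:assumptionsBN}, and the identification of the strata $W_{\bm{\lambda}}^{\bm{\kappa}}$ with the pointed Brill--Noether loci $W^{\bm{a}(\bm{\kappa})}_d(C,P)$ (which the paper reads off via \eqref{eq:laBN} and Remark \ref{rmk:lakavsla}, while you verify it by direct comparison of the $h^0$ conditions together with the redundancy of the inserted vanishing orders). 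Your index bookkeeping for the position of $a_j$ in $\bm{a}(\bm{\kappa})$ and the redundancy/emptiness arguments check out, so no gap.
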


The  expression  for $\bm{a}(\bm{\kappa)}$ is obtained from \eqref{eq:laBN} using the partition $\lambda$  assigned to $W^{\bm{a}(\bm{\kappa)}}_d(C, P)$ as in Remark \ref{rmk:lakavsla}.
Examples are studied below.

\subsection{The curve case}
When $(C,P)$ is general of genus $g\geq 1$ and $W^{\bm{a}}_d(C, P)$ is one-dimensional, the stratification from \S \ref{sec:filtr} consists of only the stratum with $\bm{k^+}=(1,\dots,r+1)$. 
Hence the curve $W^{\bm{a}}_d(C, P)$ is necessarily smooth. 
Expanding formula \eqref{eq:TyOmegatilde}, one has that $T_y\left(W^{\bm{a}}_d(C, P) \right)$ is 
\begin{equation}
\label{eq:csmBNcurve}
 \left(1 +\frac{1}{2}\left((r+1)\theta+ \sum_{k=1}^{r+1} (\lambda_k-r-2+k)T_k \right)(y-1) \right)\left| c_{\lambda_i+j-i} \right|_{1\leq i,j\leq r+1}.
\end{equation}
One computes
\begin{align*}
\theta \left| c_{\lambda_i+j-i} \right|_{1\leq i,j\leq r+1} &= g! \left| \frac{1}{(\lambda_i+j-i)!} \right|_{1\leq i,j\leq r+1} , \\
T_k \left| c_{\lambda_i+j-i} \right|_{1\leq i,j\leq r+1} &= g!\left| \frac{1}{(\lambda_i+\delta_{i,k}+j-i)!} \right|_{1\leq i,j\leq r+1}
\end{align*}
where $\delta_{i,k}$ is the Kronecker delta: $\delta_{i,k}=1$ for $i=k$, and $\delta_{i,k}=0$ otherwise.
The determinants can be computed via the following application of the Vandermonde identity:
\[
 \left|  \frac{1}{(l_i+j-i)!} \right|_{1\leq i,j\leq r+1} 
= \frac{\prod_{1\leq i<j \leq r+1} (l_i-l_j+j-i)}{\prod_{i=1}^{r+1} (l_i+r+1-i)!}.
\]
Using the combinatorial identity
\[
(r+1) \left| \frac{1}{(\lambda_i+j-i)!} \right|_{1\leq i,j\leq r+1}
 = \sum_{k=1}^{r+1}(\lambda_k+r+2-k)\left| \frac{1}{(\lambda_i+\delta_{i,k}+j-i)!} \right|_{1\leq i,j\leq r+1},
\]
the top degree part in \eqref{eq:csmBNcurve} gives
\[
\chi_{y} \left( W^{\bm{a}}_d(C, P) \right)  =(y-1)\,g! \sum_{k=1}^{r+1} \lambda_k \left| \frac{1}{(\lambda_i+\delta_{i,k}+j-i)!} \right|_{1\leq i,j \leq r+1}.
\]
For the curve $G^{\bm{a}}_d(C, P)$, the stratification in \S\ref{ss:fiber} consists of a single non-empty stratum, thus $\chi_{y} \left( G^{\bm{a}}_d(C, P) \right)=\chi_{y} \left( W^{\bm{a}}_d(C, P) \right)$.
As a check, for $y=0$ one recovers the \textit{holomorphic} Euler characteristic computed in \cite[\S 4.2]{act}.

\subsection{The surface case when $\lambda_1=\cdots=\lambda_{r+1}$} 
\label{sec:BNsurfalleqlas}
Let us consider  the surface case in the \textit{classical} Brill-Noether setting, that is, with no special ramification required at the marked point. Thus one has $a_i=i$ for $0\leq i\leq r$, and $\lambda_1=\cdots=\lambda_{r+1}=g-d+r=:\lambda$. In this case, one simplifies the notation as $W^r_d(C):=W^{\bm{a}}_d(C, P)$, and similarly, $G^r_d(C):=G^{\bm{a}}_d(C, P)$. The degree-two operators with non-zero action are:
\begin{align}
\label{TT}
\begin{split}
R_1^2 \left| c_{\lambda+j-i} \right|_{1\leq i,j \leq r+1} &= \frac{(r+1)(r+2)}{2(\lambda+r+1)(\lambda+r+2)}\,g! \prod_{i=0}^r \frac{i!}{(\lambda+i)!},\\
R_1 R_2 \left| c_{\lambda+j-i} \right|_{1\leq i,j \leq r+1}  &= \frac{r(r+1)}{2(\lambda+r)(\lambda+r+1)}\,g! \prod_{i=0}^r \frac{i!}{(\lambda+i)!}, \\
R_2^2  \left| c_{\lambda+j-i} \right|_{1\leq i,j \leq r+1} &= - R_1 R_2 \left| c_{\lambda+j-i} \right|_{1\leq i,j \leq t},\\
\theta \,R_1 \left| c_{\lambda+j-i} \right|_{1\leq i,j \leq r+1} &= \frac{r+1}{\lambda+r+1} \, g! \prod_{i=0}^r \frac{i!}{(\lambda+i)!},\\
\theta^2 \left| c_{\lambda+j-i} \right|_{1\leq i,j \leq r+1}  &= g! \prod_{i=0}^r \frac{i!}{(\lambda+i)!}.
\end{split}
\end{align}
We are now ready to  prove  Corollary \ref{cor:TyWrd}:

\begin{proof}[Proof of Corollary \ref{cor:TyWrd}]
The stratification from \S \ref{sec:filtr} consists of two strata: the full-dimen\-sional stratum with $\bm{k^+}=\bm{k}=(1,2,\dots,r+1)$, and the codimension two stratum with $\bm{k^+}=(2,2,\dots,r+1)$  whose class is equal to $R_1R_2 \left|c_{\lambda+j-i}\right|_{1\leq i,j\leq r+1}$.
From Corollary \ref{cor:BNW}, the motivic Hirzebruch class is given by
\begin{equation}
\label{eq:Tyinter}
T_y\left( W^r_d(C) \right) = A_{\bm{\lambda}} -\chi_y\left(\mathbb{A}^1_{\mathbb{C}}\right) \, R_1R_2 \left|c_{\lambda+j-i}\right|_{1\leq i,j\leq r+1}.
\end{equation}
As in the proof of Theorem \ref{thm:1Omegatildefrom1W},  the affine space $\mathbb{A}^1_{\mathbb{C}}$ here coincides with the maximal Schubert cell inside the Schubert variety given by the generic fiber of $\widetilde{\Omega}_{\bt}$ over the codimension-two stratum in $W^r_d(C)$. 

One has  $\chi_{y}\left(\mathbb{A}^n_{\mathbb{C}}\right)=(-y)^n$ (Example \ref{ex:TyPnAn}). 
On a surface, the power series $Q_y(\alpha)$ restricts as:
\[
Q_y(\alpha) = 1 + \frac{1}{2}\,\alpha(1-y) + \frac{1}{12}\,\alpha^2(1+y)^2.
\]
The resulting expansion of \eqref{eq:Tyinter} is
\begin{eqnarray*}
T_y\left( W^r_d(C) \right) &=&\bigg( 1+ \frac{1}{2} \left((\lambda-r-1)R_1+(\lambda-r)    R_2+ (r+1)\theta\right)(y-1)\\
&&{}+\frac{1}{24} \bigg( 
\big( \left( 3( \lambda - r)^2 +7r  -5\lambda +2 \right) (y-1)^2 \\
&&{}   + 8(1-r -\lambda )y \big)R_1^2\\
   && {}+2  \left( \left(3( \lambda-r )( \lambda-r -1) -1\right)(y-1)^2 +20y \right)R_1 R_2\\
   && {}+\left( 3 (\lambda-r)^2 (y-1)^2+(\lambda+r-2)\left((y-1)^2 -8y \right)\right) R_2^2\\
   &&{}  +2 \left( (3\lambda(r+1)-3r(r+2)-2)(y-1)^2 -8y \right)\theta R_1\\ 
   &&{} +2 \left( (3\lambda(r+1)-3r(r+1)+1 )(y-1)^2-8y \right)\theta R_2\\ 
   &&{} +3(r+1)^2 (y-1)^2\theta ^2 \bigg)\bigg) \left| c_{\lambda+j-i}\right|_{1\leq i,j\leq r+1}.
\end{eqnarray*}
Using \eqref{TT}, this gives the statement.

For the surface $G^r_d(C)$, as the stratification in \S\ref{ss:fiber} consists of a single non-empty stratum, Corollary \ref{cor:BNG} implies $\pi_*\,T_y(G^r_d(C)) =T_y(W^r_d(C))$,
 hence $\chi_{y}(G^r_d(C)) = \chi_{y}(W^r_d(C))$.
\end{proof}

\begin{remark}
We emphasize how the stratification from \S \ref{sec:filtr} for $W^r_d(C)$ is finer than the one considered in \cite{pp}. Indeed, for this example we have two strata, while the stratification used in \cite{pp}  for the same locus consists of only one stratum, as $W^r_d(C)$ is a smooth surface.
\end{remark}

\subsection{The surface case when $r=1$} 
\label{sec:ptBNsurfpencils}
Here we consider the surface case when $r=1$.  The case $\lambda_1=\lambda_2$ being treated in \S\ref{sec:BNsurfalleqlas}, we assume here $\lambda_1>\lambda_2$.
The stratification from \S \ref{sec:filtr} consists of  the single full-dimensional stratum with $\bm{k^+}=(1,2)$, contrary to the case $\lambda_1=\lambda_2$. 
From Corollary \ref{cor:BNW}, the motivic Hirzebruch class is given by
\begin{equation}
\label{eq:Tyinterla1biggerthenla2}
T_y\left( W^{\bm{a}}_d(C,P) \right) = A_{\bm{\lambda}}.
\end{equation}
Formula \eqref{eq:TyOmegatilde} gives
\begin{eqnarray*}
T_y\left( W^{\bm{a}}_d(C,P) \right) &=&\bigg( 1+ \frac{1}{2}\left(  (\lambda_1-2)R_1+(\lambda_2-1) R_2+ 2\theta \right)(y-1)\\
&&{}+\frac{1}{24} \bigg( \left( (3 \lambda_1^2 -11 \lambda_1 +12) (y-1)^2-8y\lambda_1 \right)R_1^2\\
   && {}+2  \left(\left(3 (\lambda_1-2) \lambda_2 -3 \lambda_1 +5\right)(y-1)^2  +8y \right)R_1 R_2\\ 
   && {}+(\lambda_2-1)\left(\left(3 \lambda_2 -2\right) (y-1)^2-8y\right) R_2^2\\
   &&{}  +2 \left(\left(6 \lambda_1 -11\right)(y-1)^2 -8y\right)\theta R_1\\
   &&{} +2 \left(\left(6 \lambda_2 -5\right)(y-1)^2 -8y\right)\theta R_2\\
   &&{} +12 (y-1)^2\theta ^2 \bigg)\bigg) \left| c_{\lambda_i+j-i}\right|_{1\leq i,j\leq 2}.
\end{eqnarray*}
The degree-two operators  are:
\begin{align*}
\label{TT2}
\begin{split}
R_1^2 \left| c_{\lambda_i+j-i} \right|_{1\leq i,j \leq 2} &= g!\frac{3+\lambda_1-\lambda_2}{(\lambda_1+3)! \lambda_2!},\\
R_1 R_2 \left| c_{\lambda_i+j-i} \right|_{1\leq i,j \leq 2}  &= g!\frac{1+\lambda_1-\lambda_2}{(\lambda_1+2)! (\lambda_2+1)!}, \\
R_2^2  \left| c_{\lambda_i+j-i} \right|_{1\leq i,j \leq 2} &= g!\frac{\lambda_1-\lambda_2-1}{(\lambda_1+1)! (\lambda_2+2)!},\\
\theta\, R_1 \left| c_{\lambda_i+j-i} \right|_{1\leq i,j \leq 2} &= g!\frac{2+\lambda_1-\lambda_2}{(\lambda_1+2)! \lambda_2!},\\
\theta\, R_2 \left| c_{\lambda_i+j-i} \right|_{1\leq i,j \leq 2} &= g!\frac{\lambda_1-\lambda_2}{(\lambda_1+1)! (\lambda_2+1)!},\\
\theta^2 \left| c_{\lambda_i+j-i} \right|_{1\leq i,j \leq 2}  &= g!\frac{1+\lambda_1-\lambda_2}{(\lambda_1+1)! \lambda_2!}.
\end{split}
\end{align*}
Using these, the motivic Hirzebruch class is:

\begin{corollary}
Fix $g\geq 2$ and $\bm{a}=(0\leq a_0<a_1\leq d)$ with $\rho(g,1,d,\bm{a})=2$.
For a general smooth pointed curve $(C,P)$ of genus $g$, one has
\begin{multline*}
T_y\left( W^{\bm{a}}_d(C,P) \right) =\frac{1+\lambda_1-\lambda_2}{(\lambda_1+1)! \lambda_2!}\,\theta^{g-2} \bigg( 1+  \frac{\left(\lambda_1^2-(\lambda_2-2) \lambda_1+2\right)
   \lambda_2}{(\lambda_1+2) (\lambda_2+1)}(y-1)\theta\\
{}+\Big(\big( (2 \lambda_2 (\lambda_2+2)+1) \lambda_1^3-(\lambda_2-4) (2 \lambda_2
   (\lambda_2+2)+1) \lambda_1^2\\
   {}+(\lambda_2 (10-\lambda_2 (6
   \lambda_2+5))+3) \lambda_1-3 \lambda_2 (\lambda_2+1)^2 \big)(y-1)^2\\
   {}-(\lambda_1+2) (\lambda_2+1) \left(\lambda_1^2+4 \lambda_1-\lambda_2^2-2
   \lambda_2+3\right)y \Big)\\
   {} \frac{1}{(1+\lambda_1-\lambda_2)(\lambda_1+2)(\lambda_1+3)(\lambda_2+1)(\lambda_2+2)}\theta ^2\bigg).
\end{multline*}
\end{corollary}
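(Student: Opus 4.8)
The plan is to assemble the three pieces already prepared in \S\ref{sec:ptBNsurfpencils}. First, since $r=1$ and we are in the case $\lambda_1>\lambda_2$, the stratification of $W^{\bm a}_d(C,P)$ from \S\ref{sec:filtr} reduces to the single full-dimensional stratum $\bm{k^+}=\bm{k}=(1,2)$: there is no codimension-one stratum, and any stratum with $\bm{k^+}>(1,2)$ would have codimension at least three, hence be empty on the surface $W^{\bm a}_d(C,P)$. Consequently the inclusion–exclusion in Corollary \ref{cor:BNW} (which applies because $\rho(g,1,d,\bm a)=2\le g$) collapses, and
\[
\iota_*\,T_y\left(W^{\bm a}_d(C,P)\right)=A_{\bm\lambda}
\]
with $A_{\bm\lambda}$ as in \eqref{eq:TyOmegatilde}; here I use that $\mathrm{Pic}^d(C)$ has trivial tangent bundle, so pushing forward the motivic Hirzebruch class introduces no extra factor.

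Next I would expand $A_{\bm\lambda}$ on the surface. Since the determinant $\left|c_{\lambda_i+j-i}\right|_{1\le i,j\le 2}$ already carries cohomological degree $|\bm\lambda|=g-2$, only operator terms of degree at most two survive. Truncating $Q_y(\alpha)=1+\tfrac12\alpha(1-y)+\tfrac1{12}\alpha^2(1+y)^2$, expanding the factors $\prod_{k=1}^p Q_y(\alpha_k+R_i)$, $\bigl(Q_y(R_i)\bigr)^{-q_i}$, and $Q_y(R_i-R_j)^{-1}$ to second order, and substituting the relations $\sum_k\alpha_k=-\theta$, $\sum_k\alpha_k^2=0$ from \eqref{eq:alpharel} together with the ranks \eqref{eq:pqi} and the shifts \eqref{eq:laBN} (which convert the coefficients into expressions in $\lambda_1,\lambda_2$), one obtains precisely the displayed expansion of $T_y\left(W^{\bm a}_d(C,P)\right)$ as a polynomial in $R_1,R_2,\theta$ acting on $\left|c_{\lambda_i+j-i}\right|_{1\le i,j\le 2}$.

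Finally I would evaluate the resulting degree-$\le 2$ monomials in $R_1,R_2,\theta$ on that determinant. Using $c_n=\theta^n/n!$ in $H^*(\mathrm{Pic}^d(C))$, the rule that $R_i$ raises the index in the $i$-th row, the fact that multiplication by $\theta$ raises every index uniformly, and the Vandermonde evaluation $\bigl|1/(l_i+j-i)!\bigr|_{1\le i,j\le 2}=(l_1-l_2+1)/\bigl((l_1+1)!\,l_2!\bigr)$ in top degree, one recovers exactly the displayed table of operator values. Substituting the table into the expansion, clearing denominators, and collecting the coefficients of $\theta^{g-2},\theta^{g-1},\theta^{g}$—equivalently, factoring out $\theta^{g-2}$—yields the asserted closed form. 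The only genuine difficulty is bookkeeping: organising the second-order expansion of the triple product of $Q_y$-factors so that the mixed coefficients of $\theta R_i$ and of $R_1R_2$ come out correctly, and then simplifying the final rational functions—steps best checked with a computer algebra system. As consistency tests I would verify that specializing $y=-1$ and $y=0$ recovers the CSM class of \cite{pp} and the holomorphic Euler characteristic of \cite{act}, and note, in line with Remark \ref{rmk:piecepol}, that the limit $\lambda_1\to\lambda_2$ does not reproduce the formula of \S\ref{sec:BNsurfalleqlas}, reflecting that the hypothesis $\lambda_1>\lambda_2$ enters essentially through the single-stratum structure.
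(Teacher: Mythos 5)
Your outline is the same as the paper's: since $\lambda_1>\lambda_2$ the only stratum is $\bm{k^+}=(1,2)$ (the stratum $\bm{k^+}=(2,2)$ has codimension at least three, hence is empty on the surface), so $\iota_*T_y\left(W^{\bm a}_d(C,P)\right)=A_{\bm\lambda}$; one then truncates $Q_y$ at degree two, rewrites \eqref{eq:TyOmegatilde} as a degree-$\le 2$ operator in $R_1,R_2,\theta$ acting on $\left|c_{\lambda_i+j-i}\right|_{1\le i,j\le 2}$, and evaluates. This is exactly how \S\ref{sec:ptBNsurfpencils} proceeds, and your use of Corollary \ref{cor:BNW}, the relations \eqref{eq:alpharel}, and the $2\times 2$ Vandermonde evaluation is correct.

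One evaluation rule you state is wrong, though, and if applied literally it would spoil the $\theta R_1$, $\theta R_2$, and $\theta^2$ entries: multiplication by $\theta$ does \emph{not} ``raise every index uniformly.'' Since all classes $c(i)$ equal $c=\mathrm{e}^\theta$, one has $c_n=\theta^n/n!$ and hence $\theta\cdot c_n=(n+1)\,c_{n+1}$; on the determinant this is a Pieri-type operation (adding a box in all possible ways), not a uniform shift of indices. Concretely, $\theta R_1\left|c_{\lambda_i+j-i}\right|_{1\le i,j\le 2}=\theta^{g}\,\frac{2+\lambda_1-\lambda_2}{(\lambda_1+2)!\,\lambda_2!}$ (in agreement with the paper's table), whereas the uniform-raising rule would give $\theta^{g}\,\frac{2+\lambda_1-\lambda_2}{(\lambda_1+3)!\,(\lambda_2+1)!}$, off by the factor $(\lambda_1+3)(\lambda_2+1)$; similarly for $\theta R_2$ and $\theta^2$. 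The fix is trivial --- just multiply by $\theta$ as a cohomology class after writing every entry as $\theta^n/n!$ (only the $R_i$ act as row-wise raising operators) --- and with that correction your computation reproduces the stated formula; your consistency checks at $y=-1$, $y=0$ and the remark on non-specialization to $\lambda_1=\lambda_2$ are apt.
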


We deduce the Hirzebruch $\chi_y$-genus:

\begin{corollary}
Fix $g\geq 2$ and $\bm{a}=(0\leq a_0<a_1\leq d)$ with $\rho(g,1,d,\bm{a})=2$.
For a general smooth pointed curve $(C,P)$ of genus $g$, one has
\begin{multline*}
\chi_y\left( W^{\bm{a}}_d(C,P) \right) =
 \Big(\big( (2 \lambda_2 (\lambda_2+2)+1) \lambda_1^3-(\lambda_2-4) (2 \lambda_2
   (\lambda_2+2)+1) \lambda_1^2\\
   {}+(\lambda_2 (10-\lambda_2 (6
   \lambda_2+5))+3) \lambda_1-3 \lambda_2 (\lambda_2+1)^2 \big)(y-1)^2\\
   {}-(\lambda_1+2) (\lambda_2+1) \left(\lambda_1^2+4 \lambda_1-\lambda_2^2-2
   \lambda_2+3\right)y \Big)  \frac{g!}{(\lambda_1+3)! (\lambda_2+2)!}.
\end{multline*}
The stratification in \S\ref{ss:fiber} consists of a single non-empty stratum, hence $\chi_{y}(G^{\bm{a}}_d(C,P)) = \chi_{y}(W^{\bm{a}}_d(C,P))$.
\end{corollary}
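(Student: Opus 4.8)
The plan is to extract the Hirzebruch $\chi_y$-genus from the formula for $T_y\!\left(W^{\bm{a}}_d(C,P)\right)$ established in the preceding corollary. Since $\lambda_1>\lambda_2$, the stratification of $W^{\bm{a}}_d(C,P)$ from \S\ref{sec:filtr} consists of the single full-dimensional stratum $\bm{k^+}=(1,2)$, so Corollary~\ref{cor:BNW} gives $\iota_*\,T_y\!\left(W^{\bm{a}}_d(C,P)\right)=A_{\bm{\lambda}}$, with $A_{\bm{\lambda}}$ as in \eqref{eq:TyOmegatilde}. First I would expand $A_{\bm{\lambda}}$ using that $T_y\!\left(\mathrm{Pic}^d(C)\right)=1$, that all classes $c(i)$ equal $\mathrm{e}^\theta$, and the relations \eqref{eq:alpharel} on the Chern roots $\alpha_k$ of $\mathscr{E}$; since $\left|c_{\lambda_i+j-i}\right|_{1\le i,j\le 2}$ already lies in cohomological codegree $|\bm{\lambda}|=g-2$, only the part of the operator of total degree at most $2$ contributes (equivalently $Q_y(\alpha)$ truncates to $1+\tfrac12\alpha(1-y)+\tfrac1{12}\alpha^2(1+y)^2$), yielding exactly the degree-$\le 2$ operator expression for $T_y\!\left(W^{\bm{a}}_d(C,P)\right)$ displayed in \S\ref{sec:ptBNsurfpencils}. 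Then $\chi_y\!\left(W^{\bm{a}}_d(C,P)\right)=\int_{\mathrm{Pic}^d(C)}T_y\!\left(W^{\bm{a}}_d(C,P)\right)$ is the coefficient of $\theta^g$, so the degree-zero and degree-one terms drop.

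Concretely this amounts to substituting the six evaluations of $R_1^2,\ R_1R_2,\ R_2^2,\ \theta R_1,\ \theta R_2,\ \theta^2$ on $\left|c_{\lambda_i+j-i}\right|_{1\le i,j\le 2}$ recorded in \S\ref{sec:ptBNsurfpencils} into the degree-two part of that expansion. Each of those evaluations follows, as in the curve case, from $\int_{\mathrm{Pic}^d(C)}\theta^g=g!$ together with the Vandermonde determinant identity $\left|\tfrac{1}{(l_i+j-i)!}\right|_{1\le i,j\le 2}=\tfrac{l_1-l_2+1}{(l_1+1)!\,l_2!}$, applied after the raising operators shift the relevant indices $l_i=\lambda_i$ (or $\lambda_i+1$). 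Collecting all contributions over the common denominator $(\lambda_1+3)!\,(\lambda_2+2)!$ and separating the coefficient of $(y-1)^2$ from the coefficient of $y$ produces the advertised cubic-in-$\lambda_1$ numerator. As consistency checks, the specialization $y=0$ should reproduce the holomorphic Euler characteristic computed in \cite{act}, and $y=-1$ the topological Euler characteristic of the surface.

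For the equality $\chi_y\!\left(G^{\bm{a}}_d(C,P)\right)=\chi_y\!\left(W^{\bm{a}}_d(C,P)\right)$, I would invoke Corollary~\ref{cor:BNG}: under the present hypotheses the stratification of \S\ref{ss:fiber} consists of the single non-empty stratum $\bm{\kappa}=0$, since for $\rho(g,1,d,\bm{a})=2$ and $\lambda_1>\lambda_2$ one checks that every other $W^{\bm{a}(\bm{\kappa})}_d(C,P)$ is empty for a general $(C,P)$, by the one-pointed Brill--Noether Theorem recalled in \S\ref{sec:assumptionsBN}. Since $d_{\bm{0}}=1$, Corollary~\ref{cor:BNG} reduces to $(\pi\iota)_*\,T_y\!\left(G^{\bm{a}}_d(C,P)\right)=\iota_*\,T_y\!\left(W^{\bm{a}}_d(C,P)\right)$ in $H^*\!\left(\mathrm{Pic}^d(C)\right)[y]$, and pushing forward to a point yields the claimed equality of $\chi_y$-genera.

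The main obstacle is purely the algebraic bookkeeping of the second paragraph: the intermediate expansion of $A_{\bm{\lambda}}$ carries many degree-two monomials in the $R_i$ and $\theta$ with coefficients polynomial in $y,\lambda_1,\lambda_2$, and assembling them into the compact fraction of the statement, while routine, is delicate enough that it is best verified with computer algebra. No further conceptual input beyond Corollaries~\ref{cor:BNW} and \ref{cor:BNG} and the determinant identities is required.
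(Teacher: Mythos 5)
Your proposal is correct and follows the paper's own route: since $\lambda_1>\lambda_2$ the only stratum in the \S\ref{sec:filtr} stratification is $\bm{k^+}=(1,2)$, so $T_y(W^{\bm{a}}_d(C,P))=A_{\bm{\lambda}}$, which is expanded to degree two and evaluated with the six operator identities (Vandermonde plus $\int\theta^g=g!$), the $\chi_y$-genus being the top-degree term; the $G$ versus $W$ comparison via Corollary~\ref{cor:BNG} with the single non-empty $\bm{\kappa}=0$ stratum is likewise exactly the paper's argument.
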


\begin{remark}
\label{rmk:piecepol}
Interestingly, the above formula for $\chi_{y}(W^{\bm{a}}_d(C,P))$ in the case $\lambda_1>\lambda_2$ (i.e., $a_1>a_0$) does not specialize to the formula for the case $\lambda_1=\lambda_2$ from \S\ref{sec:BNsurfalleqlas}, unless $y=0$. 

The discrepancy arises from the fact that the stratification in the surface case with $r=1$ consists of two strata when $\lambda_1=\lambda_2$, while there is only one stratum when $\lambda_1>\lambda_2$. Indeed, the stratum with $\bm{k^+}=(2,2)$ has codimension two when $\lambda_1=\lambda_2$, while it has codimension at least three when $\lambda_1>\lambda_2$, hence it is empty on surfaces. Consequently, the motivic Hirzebruch class $T_y\left( W^{\bm{a}}_d(C,P) \right)$ in the surface case with $r=1$ and $\lambda_1>\lambda_2$ given by \eqref{eq:Tyinter} does not specialize to the surface case with $\lambda_1=\lambda_2$ given by \eqref{eq:Tyinterla1biggerthenla2}, unless $y=0$.

This is in contrast with the case $y=0$, corresponding to the holomorphic Euler characteristic of a surface $W^{\bm{a}}_d(C,P)$, which following \cite{act}, does specialize from the case $\lambda_{i}>\lambda_{i+1}$ to the case  $\lambda_{i}=\lambda_{i+1}$, for any $i$. When $r=1$, this can be seen here since the second summand in \eqref{eq:Tyinter} vanishes for $y=0$.
\end{remark}

%%%%%%%%%%%%%%%%%%%%%%%%%%%%%%%
%%%%%%%%%%%%%%%%%%%%%%%%%%%%%%%
%%%%%%%%%%%%%%%%%%%%%%%%%%%%%%%

%%%%%%%%%%%%%%%%%%%%%%%%%%%%%%%%%%%%%%
\bibliographystyle{abbrv}
\bibliography{Biblio}

%%%%%%%%%%%%%%%%%%%%%%%%%%%%%%%%%%%%%%%%
\end{document}